\numberwithin{equation}{section}
\theoremstyle{plain}
\newtheorem{theorem}{Theorem}[section]
\newtheorem{corollary}[theorem]{Corollary}
\newtheorem{proposition}[theorem]{Proposition}
\theoremstyle{definition}
\newtheorem{remark}[theorem]{Remark}
\newcommand{\Fourier}{ \mathcal{F}}
\newcommand{\FourierInverse}{ \mathcal{F}^{ - 1 } }
\newcommand{\Integer}{ \mathbb{Z} }
\newcommand{\Real}{ \mathbb{R} }
\newcommand{\Torus}{ \mathbb{T} }
\title{JUSTIFICATION OF THE HYDROSTATIC APPROXIMATION OF THE PRIMITIVE EQUATIONS IN ANISOTROPIC SPACE $L^\infty_H L^q_{x_3}(\Torus^3)$}
\author{Ken Furukawa \thanks{Institute of physical and chemical research (RIKEN), ken.furukawa@riken.jp} and Takahito Kashiwabara \thanks{The university of Tokyo, tkashiwa@ms.u-tokyo.ac.jp} \thanks{The second author was partly supported by JSPS Grant-in-Aid for Early-Career Scientists (No. 20K14357) and by Grant for The University of Tokyo Excellent Young Researchers.}
}
\begin{document}

\maketitle

\abstract{
    The primitive equations are fundamental models in geophysical fluid dynamics and derived from the scaled Navier-Stokes equations.
    In the primitive equations, the evolution equation to the vertical velocity is replaced by the so-called hydrostatic approximation.
    In this paper, we give a justification of the hydrostatic approximation by the scaled Navier-Stoke equations in anisotropic spaces $L^\infty_H L^q_{x_3} (\Torus^3)$ for $q \geq 1$.
}


\section{Introduction} \label{Sec3-1}
    We consider the primitive equations
    \begin{align} \label{eq_primitive}
            \begin{array}{rclcl}
                \partial_t v - \Delta v + v \cdot \nabla_H v + w \partial_3 v + \nabla_H \pi
                & =
                & 0
                & \mathrm{in}
                & \Torus^3 \times (0, \infty), \\
                \partial_3 \pi
                & =
                & 0
                & \mathrm{in}
                & \Torus^3 \times (0, \infty), \\
                \mathrm{div}_H \, v + \partial_3 w
                & =
                & 0
                & \mathrm{in}
                & \Torus^3 \times (0, \infty),\\
                v (0)
                & =
                & v_0
                & \mathrm{in}
                & \Torus^3,\\
            \end{array}
    \end{align}
    where $u = (v, w) \in \Real^2 \times \Real$ is a vector field, $\pi$ is a scalar function, $\nabla_H=(\partial_1, \partial_2)^T$ is the horizontal gradient, and $\Torus = \Real / 2 \pi \Integer$ is the flat torus.
    We impose the periodic boundary conditions.
    The second equation in (\ref{eq_primitive}) is called the hydrostatic approximation.
    The vertical component $w$ is give by the formula
    \begin{align} \label{eq_formula_for_w}
        w (t, x^\prime, x_3)
        = - \int_{- \pi }^{x_3}
            \mathrm{div}_H \, v (t, x^\prime , \zeta)
        d z, \quad
        x = (x^\prime, x_3) \in \Torus^2 \times \Torus, \, t>0,
    \end{align}
    where $\mathrm{div}_H = \nabla_H \cdot$ is the horizontal divergence.
    This formula is from the divergence-free condition.
    We also impose $w (\cdot, \cdot, \pm \pi)=0$.
    We always assume this assumption to the horizontal component of three-dimensional divergence-free vector fields in this paper.
    In the case of the Neumann boundary conditions in the domain $\Torus^2 \times (-\pi, 0)$ are reduced to the periodic case in $\Torus^3$ by the even and odd extension for $v$ and $w$, respectively.
    We invoke that $w$ satisfies the nonlinear parabolic equation
    \begin{align} \label{eq_for_w}
        \begin{split}
            \partial_t w - \Delta w
            & = \int_{- \pi}^{x_3}
                \mathrm{div}_H \, \left(
                    \tilde{v} \cdot \nabla_H \tilde{v}
                \right)
            d z
            + \mathrm{div}_H \left(
                \int_{- \pi}^{x_3}
                    \tilde{v}
                d z \cdot \nabla_H \overline{v}
            \right) \\
            & + \mathrm{div}_H \, \left(
                \overline{v} \cdot \nabla_H \int_{- \pi}^{x_3}
                    \tilde{v}
                d z
            \right)
            + \mathrm{div}_H (w \tilde{v}) \\
            & + \mathrm{div}_H \left(
                \int_{- \pi}^{x_3}
                    (\mathrm{div}_H \tilde{v}) \tilde{v}
                d z
            \right) \\
            &- \frac{1}{2} (x_3 - \pi) \mathrm{div}_H \int_{- \pi}^{x_3}
                \tilde{v} \cdot \nabla_H \tilde{v}
                + (\mathrm{div}_H \, \tilde{v}) \tilde{v}
            d z \\
            & =: F (v, w),
        \end{split}
    \end{align}
    for $x_3 \in (- \pi, \pi)$, where $\overline{v} = \frac{1}{2 \pi} \int_{- \pi}^\pi v \, dz $ and $\tilde{v} = v - \overline{v}$.
    Note that (\ref{eq_for_w}) differs from the equation for $w$ derived in Proposition 4.6 of \cite{FurukawaGigaHieberHusseinKashiwabaraWrona2018} in that no vertical derivatives appear in the right-hand-side terms.
    The derivation is described in Appendix \ref{seq_derivation_PE}.

    Existence of the global weak solution to the primitive equations on spherical shells was proved by Lions, Temam and Wang \cite{LionsTemamWangShou1992}.
    Local well-posedness was proved by Guill\'{e}n-Gonz\'{a}lez, Masmoudi and Rodr\'{i}guez-Bellido \cite{GuillenMasmoudiRodriguez2001} for $H^1$ initial data, where $H^s$ is the Sobolev space with $s \in \Real$.
    Cao and Titi \cite{CaoTiti2007} proved a $H^1$ energy bound to establish the global well-posedness.
    Hieber and Kashiwabara \cite{HieberKashiwabara2016} extended this result and proved the global well-posedness in Lebesgue spaces $L^p$-settings for $p\geq 3$.
    Recently, Giga, Gries, Hieber, Hussein, and Kashiwabara \cite{GigaGriesHieberHusseinKashiwabara2017_analiticity} showed the global well-posedness in $L^p$-$L^q$ settings under the periodic, Neumann, Dirichlet, Dirichlet-Neumann mixed boundary conditions.
    Giga, Gries, Hieber, Hussein, and Kashiwabara \cite{GigaGriesHieberHusseinKashiwabara2017_L_infty_L1} showed the global well-posedness in $L^\infty_H L^1_{x_3} (\Torus^3)$, where $L^\infty_H L^p_{x_3} (\Torus^3)$ for $q \geq 1$ denotes an anisotropic space equipped with the norm
    \begin{align} \label{eq_def_norm_Linfty_Lq}
        \Vert
            f
        \Vert_{L^\infty_H L^q_{x_3} (\Torus^3)}
        := \sup_{x^\prime \in \Torus^2} \left(
            \int_{\Torus}
                \left|
                    f (x^\prime, x_3)
                \right|^q
            d x_3
        \right)^{1/q}.
    \end{align}
    Giga, Gries, Hieber, Hussein, and Kashiwabara \cite{GigaGriesHieberHusseinKashiwabara2020_L_infty_Lp} also proved the global well-posedness in $L^\infty_H L^q_{x_3} (\Torus^2 \times (-h,0))$ for $h>0$ and $q \geq 3$ under the Dirichlet-Neumann mixed boundary conditions.
    An advantage of $L^\infty_H L^q_{x_3}$-approach is that one need not assume smoothness for initial data.

    The aim of this paper is to give a mathematically rigorous justification of the hydrostatic approximation for the primitive equations under less smoothness assumptions than the previous works.
    We first introduce a brief derivation of the primitive equations.
    The primitive equation is derived by the Navier-Stokes equations with anisotropic viscosity, which are horizontally $O(1)$ and vertically $O(\varepsilon^2)$.
    Applying a scaling to equations, we obtain the scaled Navier-Stokes equations
    \begin{align} \label{eq_scaled_Navier-Stokes}
        \left.
            \begin{array}{rclcl}
                \partial_t v_\varepsilon - \Delta v_\varepsilon + u_\varepsilon \cdot \nabla v_\varepsilon + \nabla_H \pi_\varepsilon
                & =
                & 0
                & \mathrm{in}
                & \Torus^3 \times (0, \infty), \\
                \varepsilon \left(
                    \partial_t w_\varepsilon - \Delta w_\varepsilon +   u_\varepsilon \cdot \nabla w_\varepsilon
                \right)
                + \partial_3 \pi_\varepsilon / \varepsilon
                &=
                & 0
                & \mathrm{in}
                & \Torus^3 \times (0, \infty), \\
                \mathrm{div} \, u
                & =
                & 0
                & \mathrm{in}
                & \Torus^3 \times (0, \infty), \\
                u_\varepsilon (0)
                & =
                & u_0
                & \mathrm{in}
                & \Torus^3, \\
            \end{array}
        \right.
    \end{align}
    see \cite{FurukawaGigaHieberHusseinKashiwabaraWrona2018}, \cite{FurukawaGigaKashiwabara2021}, and \cite{LiTiti2017} for the details.
    If we multiply $\varepsilon$ to the seconde equation of (\ref{eq_scaled_Navier-Stokes}) and take formal limit $\varepsilon \rightarrow 0$, then we obtain (\ref{eq_primitive}).

    To justify this formal derivation we have to show the difference between the solutions to (\ref{eq_primitive}) and (\ref{eq_scaled_Navier-Stokes}) converges to zero in some topologies.
    We put
    \begin{align*}
        \begin{split}
            & U_\varepsilon
            = (V_\varepsilon, W_\varepsilon), \\
            & V_\varepsilon
            = v_\varepsilon - v, \quad
            W_\varepsilon
            = w_\varepsilon - w, \quad
            \Pi_\varepsilon
            = \pi_\varepsilon - \pi.
        \end{split}
    \end{align*}
    Then we see that $(U_\varepsilon, \Pi_\varepsilon)$ satisfies
    \begin{align} \label{eq_difference}
        \left.
            \begin{array}{rclcl}
                \partial_t V_\varepsilon - \Delta V_\varepsilon + \nabla_H \pi_\varepsilon
                & =
                & F_H (U_\varepsilon, u)
                & \mathrm{in}
                & \Torus^3 \times (0, \infty),\\
                \varepsilon \left(
                    \partial_t W_\varepsilon - \Delta W_\varepsilon
                \right)
                + \partial_3 \Pi_\varepsilon / \varepsilon
                &=
                & \varepsilon F_3 (U_\varepsilon, u) + \varepsilon \tilde{F} (v, w)
                & \mathrm{in}
                & \Torus^3 \times (0, \infty), \\
                \mathrm{div} \, U_\varepsilon
                & =
                & 0
                & \mathrm{in}
                & \Torus^3 \times (0, \infty), \\
                U_\varepsilon (0)
                & =
                & 0
                & \mathrm{in}
                & \Torus^3,
            \end{array}
        \right.
    \end{align}
    where
    \begin{align} \label{eq_def_of_F}
        \begin{split}
            F_H (U_\varepsilon, u)
            & = - \left(
                U_\varepsilon \cdot \nabla V_\varepsilon + u \cdot \nabla   V_\varepsilon + U_\varepsilon \cdot \nabla v
            \right), \\
            F_3 (U_\varepsilon, u)
            & = - \left(
                U_\varepsilon \cdot \nabla W_\varepsilon + u \cdot \nabla   W_\varepsilon + U_\varepsilon \cdot \nabla w
            \right), \\
            \tilde{F} (v,w)
            & = - \left(
                F (v,w) + u \cdot \nabla w
            \right).
        \end{split}
    \end{align}
    Note that
    \begin{align*}
        \mathrm{div} \, U_\varepsilon
        = \mathrm{div}_H \, V_\varepsilon
        + \frac{\partial_3}{\varepsilon} (\varepsilon W_\varepsilon)
        = \mathrm{div}_\varepsilon (V_\varepsilon, \varepsilon W_\varepsilon)^T,
    \end{align*}
    where $\mathrm{div}_\varepsilon \, f = \mathrm{div}_H f^\prime +\partial_3 f_3 / \varepsilon$ for a vector field $f = (f^\prime, f_3)^T$.

    The justification of the hydrostatic approximation is reduced to showing that $U_\varepsilon$ converges to zero.
    Az\'{e}rad and Guill\'{e}n \cite{AzeradGuillen2001} proved the weak convergence in the energy space.
    Li and Titi \cite{LiTiti2017} showed the strong convergence in the energy space.
    They also proved global well-posedness to (\ref{eq_scaled_Navier-Stokes}) for small $\varepsilon$ compared to the initial data.
    The authors together with Giga, Hieber, Hussein, and Wrona \cite{FurukawaGigaHieberHusseinKashiwabaraWrona2018} extended Li and Titi's result to $L^p$-$L^q$ settings under the Neumann boundary conditions.
    The authors together with Giga \cite{FurukawaGigaKashiwabara2021} showed the strong convergence in $L^p$-$L^q$ settings under the Dirichlet boundary conditions.

    We consider the solution to (\ref{eq_difference}) in the sense of a mild solution, namely
    \begin{align} \label{eq_integral_form_diff_eq}
        \left(
            \begin{array}{c}
                V_\varepsilon (t) \\
                \varepsilon W_\varepsilon (t)
            \end{array}
        \right)
        = \int_0^t
        e^{(t-s)\Delta} \mathbb{P}_\varepsilon \left(
                \begin{array}{c}
                    F_H (U_\varepsilon (s), u (s))\\
                    \varepsilon F_3 (U_\varepsilon (s), u (s))
                    + \varepsilon \tilde{F} (v (s), w (s))
                \end{array}
            \right)
        ds,
    \end{align}
    where $\mathbb{P}_\varepsilon$ is the anisotropic Helmholtz projection which maps from $L^\infty_H L^q_{x_3} (\Torus^3)$-vector fields to $\mathrm{div}_\varepsilon$-free $L^\infty_H L^q_{x_3} (\Torus^3)$-vector fields.

    The first main result of this paper is the global well-posedness to (\ref{eq_difference}) in $L^\infty_H L^q_{x_3}(\Torus^3)$ setting for small $\varepsilon$ compared to the initial data of the primitive equations.
    We write $C_H L^q_{x_3} (\Torus^3) = C (\Torus ; L^q (\Torus^2))$ equipped with the norm (\ref{eq_def_norm_Linfty_Lq}) and $C_t C_H L^q_{x_3} (\Torus^3 \times I) = C(I ; C_H L^q_{x_3}(\Torus^3))$ equipped with the norm
    \begin{align} \label{eq_def_norm_C_t_Linfty_Lq}
        \Vert
            f
        \Vert_{C_t L^\infty_H L^q_{x_3} (\Torus^3 \times I)}
        := \sup_{t \in I} \sup_{x^\prime \in \Torus^2} \left(
            \int_{\Torus}
                \left|
                    f (x^\prime, x_3)
                \right|^q
            d x_3
        \right)^{1/q}.
    \end{align}
    for $q \geq 1$ and an interval $I$.

    \begin{theorem} \label{thm_main}
        Let $T>0$, $q \geq 1$, $u_0 = (v_0, w_0 )\in C_H L^q_{x_3} (\Torus^3)$ satisfy $\mathrm{div} \, u_0 = 0$ and $\nabla_H v_0 \in L^\infty_H L^q_{x_3} (\Torus^3)$, and $\varepsilon > 0$.
        Let $u \in C_t C_H L^q_{x_3} (\Torus^3 \times [0, T))$ be a solution to (\ref{eq_primitive}) with initial data $u_0$.
        Then there exists $\varepsilon_0 > 0$ such that, if $\varepsilon < \varepsilon_0$ the equation of the differences (\ref{eq_difference}) admits a unique solution $(V_\varepsilon, W_\varepsilon) \in C_t C_H L^q_{x_3} (\Torus^3 \times [0, T))$
        \begin{align} \label{eq_Fujita_Kato_estimate}
            & \sup_{0<t<T} \Vert
                V_\varepsilon
            \Vert_{ L^\infty_H L^q_{x_3} (\Torus^3)}
            + \sup_{0<t<T} t^{q/2} \Vert
                \nabla V_\varepsilon
            \Vert_{ L^\infty_H L^q_{x_3} (\Torus^3)} \notag \\
            & + \sup_{0<t<T} \Vert
                \varepsilon W_\varepsilon
            \Vert_{L^\infty_H L^q_{x_3} (\Torus^3)}
            + \sup_{0<t<T} t^{q/2} \Vert
                \varepsilon \nabla W_\varepsilon
            \Vert_{L^\infty_H L^q_{x_3} (\Torus^3)} \notag \\
            & \leq C \varepsilon,
            \end{align}
        where $C$ is independent of $\varepsilon$.
    \end{theorem}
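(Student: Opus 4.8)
The plan is to solve the integral equation \eqref{eq_integral_form_diff_eq} by a Fujita--Kato fixed-point argument in a time-weighted version of $C_t C_H L^q_{x_3}(\Torus^3\times[0,T))$ whose norm is the left-hand side of \eqref{eq_Fujita_Kato_estimate} with the prefactor $\varepsilon$ dropped. The analytic core is a family of smoothing estimates for the perturbed Stokes semigroup $e^{t\Delta}\mathbb{P}_\varepsilon$ on $L^\infty_H L^q_{x_3}(\Torus^3)$ that hold \emph{uniformly in} $\varepsilon\in(0,1]$; everything else is a careful but essentially routine bookkeeping of the nonlinear terms and of the $O(\varepsilon)$ inhomogeneity.

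\textbf{Step 1 (reduction to divergence form).} Since $\mathrm{div}\,u_\varepsilon=\mathrm{div}\,u=0$, also $\mathrm{div}\,U_\varepsilon=\mathrm{div}_H V_\varepsilon+\partial_3 W_\varepsilon=0$, so $(V_\varepsilon,\varepsilon W_\varepsilon)$ is $\mathrm{div}_\varepsilon$-free and \eqref{eq_integral_form_diff_eq} is consistent. I would rewrite every term of $F_H$, $F_3$, $\tilde F$ in divergence form: $U_\varepsilon\cdot\nabla V_\varepsilon=\mathrm{div}(U_\varepsilon\otimes V_\varepsilon)$, $u\cdot\nabla V_\varepsilon=\mathrm{div}(u\otimes V_\varepsilon)$, $U_\varepsilon\cdot\nabla v=\mathrm{div}(U_\varepsilon\otimes v)$, analogously for the $W_\varepsilon$-component and for $u\cdot\nabla w=\mathrm{div}(uw)$; by its very construction each term of $F(v,w)$ in \eqref{eq_for_w} is $\mathrm{div}_H$ (possibly $\mathrm{div}_H\mathrm{div}_H$, with an innocuous vertical antiderivative $\int_{-\pi}^{x_3}(\cdot)\,dz$) of a product of $v,\overline v,\tilde v,w$, and contains \emph{no vertical derivative}. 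Thus the forcing in \eqref{eq_integral_form_diff_eq} becomes a sum of terms $e^{t\Delta}\mathbb{P}_\varepsilon\,\mathrm{div}\,G$, with one derivative absorbed by the heat smoothing. It is also convenient to substitute $\partial_3 W_\varepsilon=-\mathrm{div}_H V_\varepsilon$ wherever the vertical velocity is differentiated vertically, so that the $W_\varepsilon$-nonlinearities (with $W_\varepsilon$ controlled only through $\varepsilon W_\varepsilon$) pick up an extra factor of the small quantity $\varepsilon V_\varepsilon$.

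\textbf{Step 2 (uniform-in-$\varepsilon$ linear estimates).} The key lemma to establish is
\begin{align*}
  \|e^{t\Delta}\mathbb{P}_\varepsilon g\|_{L^\infty_H L^q_{x_3}}&\le C\,\|g\|_{L^\infty_H L^q_{x_3}},\\
  \|\nabla e^{t\Delta}\mathbb{P}_\varepsilon g\|_{L^\infty_H L^q_{x_3}}+\|e^{t\Delta}\mathbb{P}_\varepsilon\,\mathrm{div}\,G\|_{L^\infty_H L^q_{x_3}}&\le C\,t^{-1/2}\bigl(\|g\|_{L^\infty_H L^q_{x_3}}+\|G\|_{L^\infty_H L^q_{x_3}}\bigr),
\end{align*}
with $C=C(T)$ independent of $\varepsilon\in(0,1]$ and of $t\in(0,T]$. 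On $\Torus^3$ the Fourier symbol of $\mathbb{P}_\varepsilon$ is $I-\eta\otimes\eta/|\eta|^2$ evaluated at $\eta=(k',k_3/\varepsilon)$, which obeys Mikhlin-type bounds uniformly in $\varepsilon$; but $L^\infty_H$ precludes Calder\'on--Zygmund theory in the horizontal variable, so instead I would estimate the convolution kernel of $e^{t\Delta}\mathbb{P}_\varepsilon$, resp.\ of $e^{t\Delta}\mathbb{P}_\varepsilon\,\mathrm{div}$, directly: the Gaussian factor $e^{-t|k|^2}$ supplies enough decay that, after the $x_3$-integration, the remaining kernel lies in $L^1$ of the horizontal torus with norm $O(1)$, resp.\ $O(t^{-1/2})$, uniformly in $\varepsilon$ and in $x'$. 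This is the step I expect to be the principal obstacle, because one must follow the anisotropic dilation $k_3\mapsto k_3/\varepsilon$ through the kernel bound without losing uniformity as $\varepsilon\to0$.

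\textbf{Step 3 (nonlinear and forcing estimates; closing the scheme).} Using Step 2 I would bound, in the weighted norm, three kinds of contributions to the Duhamel map. (i) The genuinely quadratic terms ($U_\varepsilon\otimes V_\varepsilon$, $(\varepsilon W_\varepsilon)^2/\varepsilon$, etc.) are $\le C\,T^{\delta}\|(V_\varepsilon,\varepsilon W_\varepsilon)\|^2$ for some $\delta>0$; here one uses the $1$-D embedding $W^{1,q}_{x_3}(\Torus)\hookrightarrow L^\infty_{x_3}(\Torus)$ to place one factor of each product in $L^\infty_H L^\infty_{x_3}$, and checks that the time-convolution integrals converge because the difference solution, starting from zero data, carries only the mild singularities encoded by the weights. (ii) The terms linear in $U_\varepsilon$ with the given primitive solution as coefficient ($u\otimes V_\varepsilon$, $V_\varepsilon\otimes v$, $(\varepsilon W_\varepsilon)\otimes u$, $\varepsilon U_\varepsilon\otimes w$, \dots) are $\le C\,\|u\|_{C_tC_H L^q_{x_3}([0,T))}$ times a time convolution of $\|(V_\varepsilon,\varepsilon W_\varepsilon)\|$; this coefficient is finite but not small. (iii) The inhomogeneity $\varepsilon\tilde F(v,w)=-\varepsilon\bigl(F(v,w)+u\cdot\nabla w\bigr)$ is $\le C\varepsilon$: each term is $\mathrm{div}_H$ (or $\mathrm{div}_H\mathrm{div}_H$) of a product of the primitive solution with itself; from $w=-\int_{-\pi}^{x_3}\mathrm{div}_H v\,dz$ one has $\|w\|_{L^\infty_H L^\infty_{x_3}}\le C\|\nabla_H v\|_{L^\infty_H L^q_{x_3}}$ and $\partial_3 w=-\mathrm{div}_H v$, so all products required lie in $L^\infty_H L^q_{x_3}$ with norm controlled by $\|v\|$ and $\|\nabla_H v\|$, the $v\otimes v$-type pieces being handled for $t>0$ via the $L^\infty$-smoothing of the primitive equations with a $t\to0$ singularity integrable against the weights and the semigroup kernel --- crucially no $\partial_3 v$ ever appears. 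Because the coefficient in (ii) is not small, I would close the argument in two moves: first a contraction on a short interval $[0,\delta]$ (using $U_\varepsilon(0)=0$) yielding a local mild solution in the ball of radius $\sim\varepsilon$, then a Gronwall estimate for $t\mapsto\|(V_\varepsilon,\varepsilon W_\varepsilon)\|_{X_t}$ propagating the bound to $T$, giving $\|(V_\varepsilon,\varepsilon W_\varepsilon)\|_{X_T}\le C\varepsilon\,e^{C\|u\|T}$. Choosing $\varepsilon_0$ with $\varepsilon_0\,C\,T^{\delta}e^{2C\|u\|T}<\tfrac12$ keeps the quadratic feedback subcritical throughout, which yields existence on all of $[0,T)$, the estimate \eqref{eq_Fujita_Kato_estimate}, and --- by the same Gronwall argument applied to the difference of two solutions --- uniqueness; continuity in time follows from the integral formula and dominated convergence.
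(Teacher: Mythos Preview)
Your overall strategy matches the paper's: rewrite \eqref{eq_integral_form_diff_eq} in divergence form, prove $\varepsilon$-uniform $L^\infty_HL^q_{x_3}$ bounds for $e^{t\Delta}\mathbb{P}_\varepsilon\partial_j$ by direct kernel analysis (the paper's key device here is the rewriting \eqref{eq_symbol_anisotropic_Helmholtz_projection_2} of the symbol, which replaces the dangerous $(3,3)$-entry $\xi_3^2/(\varepsilon^2|\xi_\varepsilon|^2)$ by the harmless $-|\xi'|^2/|\xi_\varepsilon|^2$), and close by a Fujita--Kato fixed point. Two points deserve comment.

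A minor one first: the bare estimate $\|e^{t\Delta}\mathbb{P}_\varepsilon g\|_{L^\infty_HL^q_{x_3}}\le C\|g\|_{L^\infty_HL^q_{x_3}}$ uniformly on $t\in(0,T]$ that you list in Step~2 is not established in the paper and is doubtful in $L^\infty_H$, since the horizontal Riesz transforms are unbounded there; the paper only proves bounds for $e^{t\Delta}\mathbb{P}_\varepsilon\partial_j$ and its fractional analogues. This is harmless because after your Step~1 every term carries a derivative anyway.

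The genuine gap is in your closing argument. You correctly observe in (ii) that the coefficient multiplying the linear-in-$U_\varepsilon$ terms is ``finite but not small'', yet you then assert a contraction on a short interval $[0,\delta]$. That contraction does not close: in the Fujita--Kato norm the bilinear estimates produce a linear coefficient of size comparable to $\sup_{0<s<\delta}s^{1/2}\|\nabla u(s)\|_{L^\infty_HL^q_{x_3}}$, and for initial data merely in $C_HL^q_{x_3}$ this quantity is $O(1)$, not $o(1)$, as $\delta\to0$. Your Gronwall step does not rescue this either: the inequality one actually obtains is $X_t\le C_2X_t^2+C_1X_t+C_0\varepsilon$ with $C_1$ a fixed $O(1)$ constant (the time-convolution kernel $(t-s)^{-1/2}s^{-1/2}$ integrates to $\pi$ independently of $t$), so there is no integral-in-$t$ structure to exponentiate. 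The paper's remedy is to split $u=u_{\mathrm{smooth}}+u_{\mathrm{small}}$ near $t=0$ (see \eqref{eq_decomposition_u}--\eqref{eq_estimates_decomposition_u} and Appendix~B): the contribution of $u_{\mathrm{small}}$ to the linear coefficient is bounded by a prescribed small $\delta$, while $u_{\mathrm{smooth}}\in C^1$ permits an interpolation that manufactures an extra $T_0^{1/4}$ factor. This yields a genuine contraction on $[0,T_0]$ with a fixed point of size $O(\varepsilon)$. For $t\ge T_0$ the primitive solution is uniformly smooth by \eqref{eq_smoothness_u_after_T_0}, so the same interpolation supplies a $T_1^{1/4}$ factor on every subsequent interval of fixed, $\varepsilon$-independent length $T_1$, and finitely many iterations reach $T$ provided $\varepsilon$ was chosen small enough at the outset. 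This iterated short-time contraction, not a Gronwall inequality, is how the bound is propagated to $[0,T)$.
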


    \begin{remark}
        \begin{enumerate}
            \item Global well-posedness of (\ref{eq_primitive}) in $C_t C_H L^1_{x_3}(\Torus^3 \times [0, T))$ have been established by Giga {\it et al.} \cite{GigaGriesHieberHusseinKashiwabara2017_L_infty_L1}.
            Although they consider in $\Real^2 \times \Torus$, we can use the result to derive the well-posedness in $\Torus^3$ by periodic extension.
            In \cite{GigaGriesHieberHusseinKashiwabara2017_L_infty_L1} they only consider the critical case $L^\infty_H L^1 (\Torus^3)$.
            This result can be extended to $L^\infty_H L^q (\Torus^3)$ for $q \geq 1$ using the same way in their proof.


            \item It is impossible to see inclusion for initial data between Theorem \ref{thm_main} and the result in \cite{FurukawaGigaHieberHusseinKashiwabaraWrona2018} directly since the spaces are quite different.
            We assumed less and more regularity with respect to the vertical and horizontal direction, respectively, compared to \cite{FurukawaGigaHieberHusseinKashiwabaraWrona2018}.

            \item It worth mentioning why we need not assume regularity for $v_0$ with respect to $x_3$.
            We see from the divergence-free condition that $\tilde{F} (v, w)$ is replaced by
            \begin{align*}
                -\tilde{F} (v, w)
                = F (v, w)
                + v \cdot \nabla w
                - \int_{- \pi}^{x_3}
                    \mathrm{div}_H \, v
                dz \, \mathrm{div}_H \, v.
            \end{align*}
            In this formula no $\partial_3$ appears, thus we need not control vertical derivative of $u$.
        \end{enumerate}
    \end{remark}

    The global well-posedness to the primitive equations in $L^\infty_H L^q_{x_3}(\Torus^3)$, Theorem \ref{thm_main}, and the definition of $U_\varepsilon$ imply
    \begin{corollary}
        Under the same assumptions for $\varepsilon, T, u_0, \varepsilon_0$, if $\varepsilon < \varepsilon_0$, then there exists a unique solution $u_{\varepsilon} = (v_\varepsilon, w_\varepsilon) \in C_t L^\infty_H L^q_{x_3} (\Torus^3 \times (0, T))$ to (\ref{eq_scaled_Navier-Stokes}) such that
        \begin{align}
            & \sup_{0<t<T} \Vert
                v_\varepsilon
            \Vert_{ L^\infty_H L^1_{x_3} (\Torus^3)}
            + \sup_{0<t<T} t^{1/2} \Vert
                    \nabla v_\varepsilon
            \Vert_{ L^\infty_H L^1_{x_3} (\Torus^3)} \notag \\
            & + \sup_{0<t<T} \Vert
                \varepsilon w_\varepsilon
            \Vert_{L^\infty_H L^1_{x_3} (\Torus^3)}
            + \sup_{0<t<T} t^{1/2} \Vert
                \varepsilon \nabla w_\varepsilon
            \Vert_{L^\infty_H L^1_{x_3} (\Torus^3)} \notag \\
            & \leq C \varepsilon + C^\prime,
            \end{align}
        where $C, \, C^\prime > 0$ are constants independent of $\varepsilon$.
    \end{corollary}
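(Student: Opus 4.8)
The plan is to recover $u_\varepsilon$ by adding the given solution $u$ of the primitive equations back to the solution $U_\varepsilon$ of the difference system. Concretely, I would set $v_\varepsilon := V_\varepsilon + v$, $w_\varepsilon := W_\varepsilon + w$ and $\pi_\varepsilon := \Pi_\varepsilon + \pi$, where $(U_\varepsilon, \Pi_\varepsilon) = (V_\varepsilon, W_\varepsilon, \Pi_\varepsilon)$ is the solution to \eqref{eq_difference} produced by Theorem \ref{thm_main} and $(u, \pi) = (v, w, \pi)$ is the given solution of \eqref{eq_primitive}, whose existence, uniqueness, and membership in $C_t C_H L^q_{x_3}(\Torus^3 \times [0,T))$ are guaranteed by the global well-posedness of \eqref{eq_primitive} recalled in the first item of the Remark. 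Since \eqref{eq_difference} was obtained precisely by subtracting \eqref{eq_primitive} from \eqref{eq_scaled_Navier-Stokes}, reversing that algebra — at the level of the mild formulation \eqref{eq_integral_form_diff_eq}, together with the identity $\mathrm{div}\, u_\varepsilon = \mathrm{div}_\varepsilon (V_\varepsilon, \varepsilon W_\varepsilon)^T + \mathrm{div}\, u = 0$ — shows that $(u_\varepsilon, \pi_\varepsilon)$ is a mild solution of \eqref{eq_scaled_Navier-Stokes} with initial data $u_0$, and $u_\varepsilon = U_\varepsilon + u \in C_t C_H L^q_{x_3}(\Torus^3 \times [0,T)) \subset C_t L^\infty_H L^q_{x_3}(\Torus^3 \times (0,T))$.

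For the quantitative estimate I would apply the triangle inequality term by term. The contributions of $V_\varepsilon$, $\varepsilon W_\varepsilon$ and of the two corresponding weighted gradient terms are all controlled by $C\varepsilon$ via \eqref{eq_Fujita_Kato_estimate}. The contributions of $v$ and of its weighted gradient norm are bounded by a constant $C'$ depending only on $\Vert u_0 \Vert_{C_H L^q_{x_3}}$, $\Vert \nabla_H v_0 \Vert_{L^\infty_H L^q_{x_3}}$ and $T$, through the well-posedness and parabolic smoothing estimates for \eqref{eq_primitive}; in particular the hypothesis $\nabla_H v_0 \in L^\infty_H L^q_{x_3}(\Torus^3)$ is exactly what keeps the weighted horizontal-gradient term finite, while the vertical-gradient part is absorbed by the $t^{1/2}$ (resp.\ $t^{q/2}$) weight. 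Finally $\Vert \varepsilon w \Vert_{L^\infty_H L^q_{x_3}} = \varepsilon \Vert w \Vert_{L^\infty_H L^q_{x_3}} \le C\varepsilon$ and similarly for the weighted gradient of $\varepsilon w$, so these terms merge into the $C\varepsilon$ part. Summing gives the asserted bound $C\varepsilon + C'$ with $C, C'$ independent of $\varepsilon$.

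Uniqueness is inherited from the two constituent problems: if $\hat u_\varepsilon = (\hat v_\varepsilon, \hat w_\varepsilon)$ is any solution of \eqref{eq_scaled_Navier-Stokes} in the stated class on $(0,T)$, then $\hat u_\varepsilon - u$ solves the difference system \eqref{eq_difference} in $C_t C_H L^q_{x_3}(\Torus^3 \times [0,T))$ (here one uses uniqueness of $u$ for \eqref{eq_primitive}), hence coincides with $U_\varepsilon$ by the uniqueness assertion of Theorem \ref{thm_main}, so $\hat u_\varepsilon = u_\varepsilon$.

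The only genuinely delicate point I anticipate is the equivalence of the mild formulations — verifying that ``$U_\varepsilon$ solves \eqref{eq_difference}'' together with ``$u$ solves \eqref{eq_primitive}'' really yields ``$u_\varepsilon$ solves \eqref{eq_scaled_Navier-Stokes}'' — because the two systems are driven by different Helmholtz-type projections ($\mathbb{P}_\varepsilon$ for the scaled Navier--Stokes system versus the hydrostatic projection implicit in \eqref{eq_primitive}), so one must check that the pressure terms reassemble correctly and that $\mathbb{P}_\varepsilon$ acts as the identity on the relevant $\mathrm{div}_\varepsilon$-free fields. This is, however, precisely the computation used in the paper to pass from \eqref{eq_scaled_Navier-Stokes} to \eqref{eq_difference}, now read in reverse; it is bookkeeping rather than new analysis, and everything else reduces to the triangle inequality applied to the bounds already in hand.
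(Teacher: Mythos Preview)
Your proposal is correct and follows exactly the approach the paper intends: the paper does not give a separate proof of this corollary, but simply states that it follows from the global well-posedness of the primitive equations in $L^\infty_H L^q_{x_3}(\Torus^3)$, Theorem~\ref{thm_main}, and the definition of $U_\varepsilon$. Your write-up is precisely the unpacking of that sentence --- reconstructing $u_\varepsilon = U_\varepsilon + u$, applying the triangle inequality to combine \eqref{eq_Fujita_Kato_estimate} with the primitive-equation bounds, and reducing uniqueness to that of the two constituent problems --- so there is nothing to add.
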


    The proof of Theorem \ref{thm_main} based on the contraction mapping principle.
    Note that the initial data of (\ref{eq_integral_form_diff_eq}) is zero and the external force $\varepsilon \tilde{F}(v, w)$ can be small for small $\varepsilon$.
    To estimate the right-hand-side of (\ref{eq_integral_form_diff_eq}), we need $L^\infty_H L^q_{x_3} (\Torus^3)$ bound for the composite operator $e^{t \Delta} \mathbb{P}_\varepsilon \partial_j$ for $j = 1, 2, 3$.
    Since the Riesz operator is not bounded in $L^\infty_H L^q_{x_3} (\Torus^3)$, we estimate $e^{t \Delta} \mathbb{P}_\varepsilon \partial_j$ by direct calculations for their kernel.
    The formula (\ref{eq_symbol_anisotropic_Helmholtz_projection_2}) is a key observation.
    To estimate $\tilde{F}(v, w)$ we need a control of $\nabla_H v$ since $\tilde{F}(v, w)$ has a term such as $\nabla_H v_j \nabla_H v_k$ for $j=1,2.3$ and $k=1,2$.
    In the semi-group approach, such kind of term cannot be estimated without additional regularity assumptions.
    Note that if $\nabla_H v_0 \in L^\infty_H L^q_{x_3} (\Torus^3)$ then $\nabla_H v (t) \in L^\infty_H L^q_{x_3} (\Torus^3)$ for $t \in [0, T)$ by proof of \cite{GigaGriesHieberHusseinKashiwabara2017_L_infty_L1} and the integral formulation for the primitive equations.
    For this reason we assume $\nabla_H v_0 \in L^\infty_H L^q_{x_3} (\Torus^3)$.

    To prove our main theorem, we first show short time well-posedness to (\ref{eq_difference}) and obtain the estimate (\ref{eq_Fujita_Kato_estimate}) in a short interval $[0, T_0]$ for $T_0 >0$.
    We can extended this solution to the interval $[0, 2 T_0)$ since $\Vert (V_\varepsilon (T_0), \varepsilon W_\varepsilon (T_0)) \Vert_{L^\infty_H L^q_{x_3} (\Torus^3)}$ is also small.
    Since $T$ is finite, we can establish global well-posedness to (\ref{eq_difference}) for small $\varepsilon$.
    This argument is used in \cite{FurukawaGigaHieberHusseinKashiwabaraWrona2018} and \cite{FurukawaGigaKashiwabara2021}.

    We introduce some notation in this paper.
    For $1 \leq q \leq \infty$, a domain $\Omega$, and $x \in \Omega$, we write $L^q (\Omega) = L^q_x (\Omega)$ to denote the Lebesgue space equipped with the norm
    \begin{align*}
        \Vert
            f
        \Vert_{L^q (\Omega)}
        = \Vert
            f
        \Vert_{L^q_x (\Omega)}
        : = \left(
            \int_\Omega
                \left \vert
                    f (x)
                \right \vert^q
            d x
        \right)^{1/q}.
    \end{align*}
    We use the usual modification when $q = \infty$.
    For $1 \leq q, r \leq \infty$, domains $\Omega$ and $\Omega^\prime$, and $(x^\prime, x) \in \Omega^\prime \times \Omega$, we write $L^q_{x^\prime} L^r_{x} (\Omega^\prime \times \Omega)$ to denote anisotropic Lebesgue spaces equipped with the norm
    \begin{align*}
        \Vert
            f
        \Vert_{L^q_{x^\prime} L^r_{x} (\Omega^\prime \times \Omega)}
        : = \left(
            \int_{\Omega^\prime}
                \left \Vert
                    f (x^\prime, \cdot)
                \right \Vert_{L^r(\Omega)}^q
            d x^\prime
        \right)^{1/q}.
    \end{align*}
    If $x^\prime$ and $x_3$ are the horizontal and vertical variable, respectively, then we write $L^q_H L^q_{x_3} (\Torus^3)$ to denote $L^q_{x^\prime} L^q_{x_3} (\Torus^2 \times \Torus)$.
    For vector fields $f$ and $g$, we denote their tensor product by $f \otimes g = (f_i g_j)_{ij}$.
    For an integrable function $f$ on $\Torus^3$, we denote its vertical average by $\overline{f} = \frac{1}{2} \int_{\Torus} f (x^\prime, z) dz$.
    We write $\Fourier f = \int_{\Real^d} e^{- i x \cdot \xi} f (x) d x / (2 \pi)^{d/2}$ and $\FourierInverse f = \int_{\Real^d} e^{i x \cdot \xi} f (\xi) d \xi / (2 \pi)^{d/2}$ to denote the Fourier and Fourier inverse transform for a integrable function $f$, respectively.
    We denote by $\Delta_H = \partial_1^2 + \partial_2^2$ the horizontal Laplace operator.

    This paper is organized as follows.
    In Section 2, we show linear estimate for the heat semi-group $e^{t \Delta}$ and the composite operator $e^{t \Delta} \mathbb{P}_\varepsilon \partial_j$.
    We also show some estimates for composite operators having fractional derivatives.
    In Section 3 and Section 4, we show non-linear estimates and the external force $\tilde{F}$, respectively, from the linear estimates of Section 2.
    In Section 5, we prove our main theorem.


\section{Linear Estimates}

    We give $L^\infty_H L^p_{x_3}$-$L^\infty_H L^q_{x_3}$-estimate for the hear semi-group.
    The reader refers to Section 4 of Grafakos's book \cite{Grafakos2008} for properties of the heat semi-group on $\Torus^d$.
    Let $K_t$ be the heat kernel on $\Torus^d$ for $d\geq 1$ such that
    \begin{align*}
        K_t (x)
        = \sum_{k \in \Integer^d} g_t (x - k), \quad
        x \in \Torus^d,
    \end{align*}
    where $g$ is the Gaussian of the form
    \begin{align*}
        g_t (x)
        = \frac{1}{
            \left(
                4 \pi t
            \right)^{\frac{d}{2}}
        } e^{ - \frac{
            \vert
                x
            \vert^2
            }{
                4 t
            }
        }, \quad
        x \in \Real^d.
    \end{align*}
    For a integrable function $f$, we denote by $e^{t \Delta} f = K_t \ast f$ the heat semi-group on $\Torus^d$.
    It is known that
    \begin{align*}
        & \Vert
            K_t
        \Vert_{L^1(\Torus^d)}
        = 1, \quad
        \Vert
            K_t
        \Vert_{L^\infty(\Torus^d)}
        \leq C \left(
            1 + \frac{1}{\sqrt{t}}
        \right)^d, \\
        & \Vert
            \partial^\alpha_x K_t
        \Vert_{L^1(\Torus^d)}
        \leq C t^{
            - \frac{\vert \alpha \vert}{2}
        }, \quad
        \Vert
            \partial^\alpha_x K_t
        \Vert_{L^\infty(\Torus^d)}
        \leq C t^{
            - \frac{\vert \alpha \vert}{2}
            - \frac{d}{2}
        },
    \end{align*}
    for any multi-index $\alpha$ and some constant $C>0$, see \cite{Grafakos2008} for the proof.
    We find from interpolation inequalities that
    \begin{align} \label{eq_L_p_estimate_heat_kernel_Torus}
        \begin{split}
            & \Vert
                K_t
            \Vert_{L^p(\Torus^d)}
            \leq C \left(
                1 + \frac{1}{\sqrt{t}}
            \right)^{
                d \left(
                    1 - \frac{1}{p}
                \right)
            }, \\
            & \Vert
                \partial_x^\alpha K_t
            \Vert_{L^p(\Torus^d)}
            \leq C t^{
                - \frac{\vert \alpha \vert}{2}
                - \frac{d}{2} \left(
                    1 - \frac{1}{p}
                \right)
            },
        \end{split}
    \end{align}
    for all $1 \leq p \leq \infty$.

    We next consider the composite operator with fraction derivative $(- \Delta)^{s/2} e^{t \Delta} f = M_t \ast f$ for $s >0$ and a integrable function $f$ on $\Torus^d$.
    We write $\widetilde{M}_t$ to denote the kernels of the corresponding composite operator on $\Real^d$, respectively, namely
    \begin{align*}
        \widetilde{M}_t
        = \FourierInverse |\xi|^s e^{- t |\xi|^2}, \quad \xi \in \Real^d.
   \end{align*}
    We know from Proposition 4.2 of Giga {\it et al.} \cite{GigaGriesHieberHusseinKashiwabara2017_L_infty_L1} that
    \begin{align*}
        & \Vert \widetilde{M}_t \Vert_{L^1 (\Real^d)}
        \leq C t^{- \frac{s}{2}}, \quad
        \Vert \widetilde{M}_t \Vert_{L^\infty (\Real^d)}
        \leq C t^{- \frac{s}{2} - d} \\
    \end{align*}
    for some constant $C>0$.
    Thus we obtain
    \begin{align*}
        \Vert
            \widetilde{M}_t
        \Vert_{L^p (\Real^d)}
        \leq C t^{- \frac{s}{2} - \frac{d}{2} \left(
            1 - \frac{1}{p}
        \right)}
    \end{align*}
    for all $1 \leq p \leq \infty$.
    The above observations and the Poisson summation formula yield
    \begin{align} \label{eq_bound_kernel_M_1}
        \begin{split}
            \Vert
                M_t
            \Vert_{L^p (\Torus^d)}
            & = \left \Vert
                \frac{1}{(2 \pi)^d} \sum_{n \in \Integer^d}
                |n|^{s/2} e^{i \xi \cdot n} e^{- t |n|^2}
            \right \Vert_{L^p (\Torus^d)} \\
            &= \Vert
                \widetilde{M}_t
            \Vert_{L^p(\Real^d)} \\
            & \leq C t^{- \frac{s}{2} - \frac{d}{2} \left(
                1 - \frac{1}{p}
            \right)}.
        \end{split}
    \end{align}
    The same way as above we define the horizontal and vertical composite operators $(- \Delta_H)^{s/2} e^{t \Delta_H}$ and $\partial^s_3 e^{t \partial_3^2}$.
    It worth mentioning that, in \cite{GigaGriesHieberHusseinKashiwabara2017_L_infty_L1}, they define the vertical operator using the Caputo derivative since they consider the well-posedness in the anisotropic domain $\Real^2 \times \Torus$.
    However, we do not use the Caputo derivative since we consider in $\Torus^3$.

    \begin{proposition} \label{prop_estimates_for_heat_semigroup}
        Let $1 \leq p \leq q \leq \infty$.
        Let $\gamma = (\alpha, \beta) \in \Integer^2 \times \Integer$ ($\gamma \neq 0$) be a multi-index and $0 < s_1, s_2 < 1$.
        Then there exists a constant $C>0$ such that
        \begin{align} \label{eq_L_q_L_p_estimate_heat_kernel_Torus}
            & \Vert
                e^{t \Delta} f
            \Vert_{L^\infty_H L^q_{x_3}  (\Torus^3)}
            \leq C \left(
                1 + \frac{1}{\sqrt{t}}
            \right)^{- \frac{1}{2}
                \left(
                    \frac{1}{p} - \frac{1}{q}
                \right)
            } \Vert
                    f
            \Vert_{ L^\infty_H L^p_{x_3} (\Torus^3)}, \\ \label{eq_L_q_L_p_estimate_heat_kernel_Torus_2}
            & \Vert
                \partial_{x^\prime}^{\alpha} \partial_3^{\beta} e^{t_1 \Delta_H} e^{t_2 \partial_3^2} f
            \Vert_{L^\infty_H L^q_{x_3} (\Torus^3)}
            \leq C t_1^{ - \frac{\vert \alpha \vert}{2} } t_2^{
                - \frac{\vert \beta \vert}{2} - \frac{1}{2}
                    \left(
                        \frac{1}{p} - \frac{1}{q}
                    \right)
                }
            \Vert
                f
            \Vert_{L^\infty_H L^p_{x_3}  (\Torus^3)}, \\ \label{eq_estimate_fractional_heat_semigroup}
            \begin{split}
                & \Vert
                    \partial_{x^\prime}^{\alpha} \partial_3^{\beta} (- \Delta_H)^{s_1/2} \partial_3^{s_2} e^{t_1 \Delta_H} e^{t_2 \partial_3^2} f
                \Vert_{L^\infty_H L^q_{x_3} (\Torus^3)} \\
                & \leq C t_1^{ - \frac{\vert \alpha \vert + s_1}{2} } t_2^{
                - \frac{\beta + s_2}{2} - \frac{1}{2}
                    \left(
                        \frac{1}{p} - \frac{1}{q}
                    \right)
                }
                \Vert
                    f
                \Vert_{L^\infty_H L^p_{x_3}  (\Torus^3)},
            \end{split}
        \end{align}
        for all $t, t_1, t_2 >0$ and $f \in L^\infty_H L^p_{x_3}  (\Torus^3)$.
    \end{proposition}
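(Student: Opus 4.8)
The plan is to reduce all three estimates to the one-dimensional kernel bounds already recorded in \eqref{eq_L_p_estimate_heat_kernel_Torus} and \eqref{eq_bound_kernel_M_1}, exploiting the tensor structure of the heat semigroup on $\Torus^3 = \Torus^2_{x'} \times \Torus_{x_3}$. Write $e^{t\Delta} = e^{t\Delta_H} e^{t\partial_3^2}$, so that the kernel factorizes as $K_t^{(3)}(x',x_3) = K_t^{(2)}(x') K_t^{(1)}(x_3)$, and similarly for the operators with horizontal/vertical derivatives appearing in \eqref{eq_L_q_L_p_estimate_heat_kernel_Torus_2} and \eqref{eq_estimate_fractional_heat_semigroup}. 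The key structural point is that the $L^\infty_H L^q_{x_3}$ norm is a supremum in $x'$ of an $L^q$ norm in $x_3$; since convolution in $x_3$ and convolution in $x'$ commute, I would first apply Young's inequality in the vertical variable, then take the supremum in $x'$ and apply Young's inequality (with the $L^1$–$L^1$ endpoint) in the horizontal variable.

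First I would prove \eqref{eq_L_q_L_p_estimate_heat_kernel_Torus}. Fix $x' \in \Torus^2$. By Fubini and Minkowski's integral inequality,
\begin{align*}
    \left\| (e^{t\Delta} f)(x', \cdot) \right\|_{L^q_{x_3}}
    \leq \int_{\Torus^2} K_t^{(2)}(x' - y')
        \left\| (e^{t\partial_3^2} f)(y', \cdot) \right\|_{L^q_{x_3}} dy',
\end{align*}
and then Young's inequality in $x_3$ (convolution with $K_t^{(1)}$, which has $L^r_{x_3}$ norm controlled by \eqref{eq_L_p_estimate_heat_kernel_Torus} with $d=1$ and $1 + 1/q = 1/r + 1/p$) gives
$\| (e^{t\partial_3^2} f)(y', \cdot) \|_{L^q_{x_3}} \leq \|K_t^{(1)}\|_{L^r_{x_3}} \|f(y', \cdot)\|_{L^p_{x_3}} \leq C (1 + t^{-1/2})^{(1-1/r)} \|f\|_{L^\infty_H L^p_{x_3}}$. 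Since $1 - 1/r = 1/p - 1/q$ and $\|K_t^{(2)}\|_{L^1} = 1$, taking the supremum over $x'$ yields the claim (I note that the stated exponent $-\tfrac12(1/p-1/q)$ on $(1+t^{-1/2})$ appears to be a typo for $+(1/p-1/q)$, but this does not affect the method). For \eqref{eq_L_q_L_p_estimate_heat_kernel_Torus_2} I would do exactly the same, except that the horizontal convolution kernel is now $\partial_{x'}^\alpha K_{t_1}^{(2)}$ with $\|\partial_{x'}^\alpha K_{t_1}^{(2)}\|_{L^1} \leq C t_1^{-|\alpha|/2}$, and the vertical convolution kernel is $\partial_3^\beta K_{t_2}^{(1)}$ with $\|\partial_3^\beta K_{t_2}^{(1)}\|_{L^r_{x_3}} \leq C t_2^{-\beta/2 - (1-1/r)/2}$ by the second line of \eqref{eq_L_p_estimate_heat_kernel_Torus}; multiplying the two bounds gives the stated estimate.

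For \eqref{eq_estimate_fractional_heat_semigroup} the only new ingredient is that the kernels now carry the fractional factors $(-\Delta_H)^{s_1/2}$ and $\partial_3^{s_2}$. Here I would invoke \eqref{eq_bound_kernel_M_1}: the horizontal kernel is the $d=2$ analogue $\partial_{x'}^\alpha(-\Delta_H)^{s_1/2} K_{t_1}^{(2)}$, whose $L^1_{x'}$ norm is $\leq C t_1^{-(|\alpha|+s_1)/2}$ (this is exactly \eqref{eq_bound_kernel_M_1} with $p=1$, $d=2$, order $|\alpha| + s_1$, after noting that differentiation and the fractional power combine into a single Fourier multiplier $|\xi|^{s_1} \xi^\alpha$ of homogeneity $|\alpha|+s_1$, and the scaling argument behind \eqref{eq_bound_kernel_M_1} goes through verbatim for such multipliers); the vertical kernel is $\partial_3^\beta \partial_3^{s_2} K_{t_2}^{(1)}$ with $L^r_{x_3}$ norm $\leq C t_2^{-(\beta+s_2)/2 - (1-1/r)/2}$ by the $d=1$ case of \eqref{eq_bound_kernel_M_1}. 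The same Young-in-$x_3$ then sup-in-$x'$ then Young-in-$x'$ chain closes the estimate.

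The main obstacle is not any single inequality but making the commutation of the two convolutions rigorous in the anisotropic norm: one must justify passing the vertical $L^q$ norm \emph{inside} the horizontal convolution integral (Minkowski's integral inequality) and then handling the horizontal convolution by the $L^1$-boundedness of $e^{t\Delta_H}$-type kernels \emph{uniformly} in the vertical slice. Once the factorization $K_t^{(3)} = K_t^{(2)} \otimes K_t^{(1)}$ is set up and Minkowski's inequality is applied, everything reduces to the scalar kernel bounds already in hand, so the argument is short; care is only needed to track which variable each Young's inequality acts on and to confirm that the fractional-derivative multipliers $|\xi|^{s}\xi^\alpha$ are covered by the scaling proof of \eqref{eq_bound_kernel_M_1}.
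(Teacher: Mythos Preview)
Your proposal is correct and follows essentially the same route as the paper: factorize the kernel as $K_{t_1}^{(2)}(x')\,K_{t_2}^{(1)}(x_3)$, apply Minkowski's integral inequality to pull the $L^q_{x_3}$ norm inside the horizontal convolution, use Young's inequality in $x_3$ together with the one-dimensional kernel bounds \eqref{eq_L_p_estimate_heat_kernel_Torus} and \eqref{eq_bound_kernel_M_1}, and finish by taking $\sup_{x'}$ and using the $L^1_{x'}$ bound on the horizontal kernel. Your observation about the sign of the exponent in \eqref{eq_L_q_L_p_estimate_heat_kernel_Torus} is also well taken.
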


    \begin{proof}
        The Young inequality and (\ref{eq_L_p_estimate_heat_kernel_Torus}) imply
        \begin{align*}
            & \left \Vert
                \partial_{x^\prime}^{\alpha} \partial_3^{\beta}  e^{t_1 \Delta_H} e^{t_2 \partial_3^2} f (x^\prime, x_3)
            \right \Vert_{L^q_{x_3}(\Torus)} \\
            & \leq \int_{\Torus^2} \left|
                     \partial_{x^\prime}^{\alpha} K_{t_1}(x^\prime - y^\prime)
                 \right|
                 \left \Vert
                     \int_{\Torus}
                        \vert
                            \partial_3^{\beta} K_{t_2} (x_3- y_3)
                        \vert
                            f (y^\prime, y_3)
                        \vert
                    d y_3
                \right \Vert_{L^q_{x_3}(\Torus)}
            d y^\prime \\
            & \leq C t_2^{
                - \frac{\vert \beta \vert}{2}
                - \frac{1}{2} \left(
                    \frac{1}{p} - \frac{1}{q}
                \right)
            } \int_{\Torus^2} \left|
                     \partial_{x^\prime}^{\alpha} K_{t_1}(x^\prime - y^\prime)
                \right|
                \Vert
                    f (y^\prime, \cdot)
                \Vert_{L^p_{x_3}(\Torus)}
            d y^\prime \\
            & \leq C t_1^{ - \frac{\vert \alpha \vert}{2} } t_2^{ - \frac{\vert \beta \vert}{2} - \frac{1}{2}
            \left(
                \frac{1}{p} - \frac{1}{q}
            \right)
            } \Vert
                    f
            \Vert_{ L^\infty_H L^p_{x_3}  (\Torus^3)}.
        \end{align*}
        In view of (\ref{eq_L_p_estimate_heat_kernel_Torus}) and (\ref{eq_bound_kernel_M_1}), the inequalities (\ref{eq_L_q_L_p_estimate_heat_kernel_Torus}) and (\ref{eq_estimate_fractional_heat_semigroup}) can be proved by the same way as above.
    \end{proof}

    Let $\mathbb{P}_\varepsilon$ be the anisotropic Helmholtz projection on $\Torus^3$ with the matrix-valued symbol
    \begin{align*}
            \sigma (\mathbb{P}_\varepsilon)
            = I_3
                - \frac{1}{|\xi_\varepsilon|^2} \left(
                    \begin{array}{c}
                        \xi_1 \\
                        \xi_2 \\
                        \xi_3 / \varepsilon
                    \end{array}
                \right)
                \otimes \left(
                    \begin{array}{c}
                        \xi_1 \\
                        \xi_2 \\
                        \xi_3 / \varepsilon
                    \end{array}
                \right),
                \quad \xi \in \mathbb{Z}^3,
        \end{align*}
    where $\xi_\varepsilon = (\xi^\prime, \xi_3 / \varepsilon)$ and $\sigma (A)$ denotes the symbol of a multiplier operator $A$.
    The projection $\mathbb{P}_\varepsilon$ is an unbounded operator on $L^\infty_H L^q_{x_3} (\Torus^3)$ since the Riesz operator is unbounded on $L^\infty(\Torus^d)$ for all $d \geq 1$.
    However, the composite operators $e^{t \Delta} \mathbb{P}_\varepsilon \partial_j$ for $j = 1, 2, 3$ and $t>0$ are bounded on $L^\infty_H L^p_{x_3} (\Torus^3)$.
    We can rewrite the anisotropic Helmholtz projection as
    \begin{align} \label{eq_symbol_anisotropic_Helmholtz_projection_2}
        \sigma (\mathbb{P}_\varepsilon)
        = \left(
            \begin{array}{cc}
                I_2
                & 0 \\
                0
                & 0
            \end{array}
        \right)
        - \frac{1}{
            \vert
                \xi_\varepsilon
            \vert^2
        } \left(
            \begin{array}{cc}
                \xi^\prime \otimes \xi^\prime
                & \xi^\prime \xi_3 / \varepsilon \\
                {\xi^\prime}^T \xi_3 / \varepsilon
                & - \vert \xi^\prime \vert^2
            \end{array}
        \right).
    \end{align}
    This is a key formula to show the boundedness for $e^{t \Delta} \mathbb{P}_\varepsilon \partial_j$.
    We denote by $R_j$ the anisotropic Riesz operator with symbol $\xi_j / |\xi_\varepsilon|$ for $j = 1, 2, 3$.
    We write $R^\prime = (R_1, R_2)^T$.

    We prove elementally estimates, which is used to show $\varepsilon$-independent bounds for composite operators.
    It may be somewhat prolix, but we show calculation to clarify dependence of $\varepsilon$ for the estimates.
    \begin{proposition} \label{prop_estimate_fational_function_epsilon_independent}
        Let $0 < \alpha \leq 1$ and $0 < \beta < 1$.
        Then there exists a constant $C>0$ such that
        \begin{align}
            & \int_0^\infty
                (t + s)^{- 1 - \frac{\alpha}{2}} \left(
                    t + \frac{s}{\varepsilon^2}
                \right)^{
                    - \frac{\beta}{2}
                }
            ds
            \leq C t^{
                - \frac{\alpha}{2} - \frac{\beta}{2}
            } \varepsilon^{\beta}, \label{eq_estimate_fational_function_epsilon_independent_1} \\
            & \int_0^\infty
                (t + s)^{- \frac{1}{2} - \frac{\alpha}{2}} \left(
                    t + \frac{s}{\varepsilon^2}
                \right)^{
                    - \frac{1}{2}
                    - \frac{\beta}{2}
                }
            ds 
            \leq C t^{
                - \frac{\alpha}{2} - \frac{\beta}{2}
            } \varepsilon^{1 + \beta}, \label{eq_estimate_fational_function_epsilon_independent_2} \\
            & \int_0^\infty
                (t + s)^{-1} \left(
                    t + \frac{s}{\varepsilon^2}
                \right)^{
                    -\frac{\alpha}{2} - \frac{\beta}{2}
                }
            ds
            \leq C t^{
                - \frac{\alpha}{2} - \frac{\beta}{2}
            } \varepsilon^{
                \alpha + \beta
            }, \label{eq_estimate_fational_function_epsilon_independent_3}\\
            & \int_0^\infty
                (t + s)^{- \frac{1}{2}} \left(
                    t + \frac{s}{\varepsilon^2}
                \right)^{
                    - \frac{1}{2} - \frac{\alpha}{2} - \frac{\beta}{2}
                }
            ds
            \leq C t^{
                - \frac{\alpha}{2} - \frac{\beta}{2}
            } \varepsilon. \label{eq_estimate_fational_function_epsilon_independent_4}
        \end{align}
        for all $t>0$ and $0 < \varepsilon \leq 1$.
    \end{proposition}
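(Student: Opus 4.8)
The plan is to establish all four inequalities by one common device, since each integrand has the form $(t+s)^{-a}\left(t+s/\varepsilon^{2}\right)^{-b}$ with $a,b>0$. A case-by-case glance shows $a+b>1$ throughout (for instance $a+b=1+\alpha/2+\beta/2$ in (\ref{eq_estimate_fational_function_epsilon_independent_1}) and $a+b=1+(\alpha+\beta)/2$ in (\ref{eq_estimate_fational_function_epsilon_independent_2})--(\ref{eq_estimate_fational_function_epsilon_independent_4})), so the integrand decays like $\varepsilon^{2b}s^{-a-b}$ as $s\to\infty$ and each integral converges. The first step is the scaling $s=t\sigma$, which separates the $t$-variable:
\begin{align*}
    \int_{0}^{\infty}(t+s)^{-a}\left(t+\frac{s}{\varepsilon^{2}}\right)^{-b}\,ds
    = t^{1-a-b}\int_{0}^{\infty}(1+\sigma)^{-a}\left(1+\frac{\sigma}{\varepsilon^{2}}\right)^{-b}\,d\sigma
    =: t^{1-a-b}\,J_{\varepsilon}.
\end{align*}
A short computation shows $1-a-b=-\alpha/2-\beta/2$ in every case (the extra halves accompanying $\alpha$ and $\beta$ always cancel), so it suffices to bound $J_{\varepsilon}$ by the appropriate power of $\varepsilon$.

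To estimate $J_{\varepsilon}$ I would split $[0,\infty)=[0,\varepsilon^{2}]\cup[\varepsilon^{2},1]\cup[1,\infty)$, using $0<\varepsilon\le1$. On $[0,\varepsilon^{2}]$ both factors are $\le1$, so that part is $\le\varepsilon^{2}$. On $[\varepsilon^{2},1]$ use $(1+\sigma)^{-a}\le1$ and $(1+\sigma/\varepsilon^{2})^{-b}\le(\sigma/\varepsilon^{2})^{-b}=\varepsilon^{2b}\sigma^{-b}$; the leftover $\int_{\varepsilon^{2}}^{1}\sigma^{-b}\,d\sigma$ is bounded by a constant if $b<1$, by $C|\log\varepsilon|$ if $b=1$, and by $C\varepsilon^{2-2b}$ if $b>1$, so this part is $\le C\varepsilon^{\min(2b,2)}$ apart from a logarithm in the borderline case. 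On $[1,\infty)$ use $(1+\sigma)^{-a}\le\sigma^{-a}$, $(1+\sigma/\varepsilon^{2})^{-b}\le\varepsilon^{2b}\sigma^{-b}$, and $\int_{1}^{\infty}\sigma^{-a-b}\,d\sigma<\infty$ (this is where $a+b>1$ enters), so this part is $\le C\varepsilon^{2b}$. Altogether $J_{\varepsilon}\le C\left(\varepsilon^{2}+\varepsilon^{\min(2b,2)}\right)$, with an additional factor $|\log\varepsilon|$ only when $b=1$.

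It remains to read off the four cases using $t^{1-a-b}=t^{-\alpha/2-\beta/2}$; note that in each case $\varepsilon^{2}$ is dominated by the target power because the target exponents $\beta$, $1+\beta$, $\alpha+\beta$, $1$ are all $<2$ (here $\alpha\le1$, $\beta<1$). For (\ref{eq_estimate_fational_function_epsilon_independent_1}), $b=\beta/2\in(0,1/2)$, so $J_{\varepsilon}\le C\varepsilon^{2b}=C\varepsilon^{\beta}$. For (\ref{eq_estimate_fational_function_epsilon_independent_2}), $b=1/2+\beta/2\in(1/2,1)$, so $J_{\varepsilon}\le C\varepsilon^{2b}=C\varepsilon^{1+\beta}$. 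For (\ref{eq_estimate_fational_function_epsilon_independent_3}), $b=(\alpha+\beta)/2\in(0,1)$, so $J_{\varepsilon}\le C\varepsilon^{2b}=C\varepsilon^{\alpha+\beta}$. For (\ref{eq_estimate_fational_function_epsilon_independent_4}), $b=1/2+(\alpha+\beta)/2$ satisfies $2b=1+\alpha+\beta>1$ and may be less than, equal to, or greater than $1$; in every subcase $J_{\varepsilon}\le C\varepsilon^{\min(2b,2)}(1+|\log\varepsilon|)\le C\varepsilon$, since $\min(2b,2)>1$ and $\varepsilon|\log\varepsilon|$ is bounded on $(0,1]$. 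The main --- and essentially only --- delicate point is therefore (\ref{eq_estimate_fational_function_epsilon_independent_4}): there the exponent $b$ of the ``far'' factor can reach or exceed $1$, so the middle region $[\varepsilon^{2},1]$ must be split into subcases with a logarithmic loss precisely at $\alpha+\beta=1$; everything else is routine once the scaling $s=t\sigma$ is in place.
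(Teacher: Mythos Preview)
Your proof is correct and, in fact, more systematic than the paper's own argument. The paper treats the four inequalities one at a time with slightly different devices: for (\ref{eq_estimate_fational_function_epsilon_independent_1}) and (\ref{eq_estimate_fational_function_epsilon_independent_3}) it uses the substitution $s=ts'$ together with the crude pointwise bound $(\varepsilon^{2}+s')^{-b}\le (s')^{-b}$ and then integrates directly; for (\ref{eq_estimate_fational_function_epsilon_independent_2}) it uses $s=ts'$ and splits the range into $[0,1]\cup[1,\infty)$; for (\ref{eq_estimate_fational_function_epsilon_independent_4}) it switches to the substitution $s=\varepsilon^{2}ts'$ and then uses $\varepsilon/(1+\varepsilon^{2}s')^{1/2}\le (s')^{-1/2}$. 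You instead do a single scaling $s=t\sigma$ to isolate $t^{-\alpha/2-\beta/2}$ and then handle the remaining $\varepsilon$-dependent integral $J_{\varepsilon}$ uniformly via the three-piece decomposition $[0,\varepsilon^{2}]\cup[\varepsilon^{2},1]\cup[1,\infty)$. What this buys you is a clean statement of the general bound $J_{\varepsilon}\le C\varepsilon^{\min(2b,2)}$ (with a harmless logarithm at $b=1$) from which all four cases drop out, and you correctly identify that only (\ref{eq_estimate_fational_function_epsilon_independent_4}) needs the borderline analysis. The paper's case-by-case treatment avoids the $b\ge 1$ discussion altogether by choosing a different substitution for that inequality, which is a minor tactical difference; neither approach is deeper than the other.
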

    \begin{proof}
        We first prove (\ref{eq_estimate_fational_function_epsilon_independent_1}).
        The change of variable $s = t s^\prime$ and the inequality $(\varepsilon^2 + s)^{-\beta/2} \leq s^{- \beta/2}$ yield
        \begin{align*}
            \int_0^\infty
                (t + s)^{- 1 - \frac{\alpha}{2}} \left(
                    t + \frac{s}{\varepsilon^2}
                \right)^{
                    - \frac{\beta}{2}
                }
            ds
            & \leq C t^{
                - \frac{\alpha}{2} - \frac{\beta}{2}
            } \varepsilon^{\beta} \int_0^\infty
                (1 + s)^{- 1 - \frac{\alpha}{2}} s^{ - \beta
                }
            ds \\
            & \leq C t^{
                - \frac{\alpha}{2} - \frac{\beta}{2}
            } \varepsilon^{\beta}.
        \end{align*}
        For (\ref{eq_estimate_fational_function_epsilon_independent_2}) we divide the integral interval to see that
        \begin{align}
                & \int_0^\infty
                    (t + s)^{- \frac{1}{2} - \frac{\alpha}{2}} \left(
                        t + \frac{s}{\varepsilon^2}
                    \right)^{
                        - \frac{1}{2}
                        - \frac{\beta}{2}
                    }
                ds \notag \\
                & \leq C t^{
                    - \frac{\alpha}{2}
                    - \frac{\beta}{2}
                } \varepsilon^{1 + \beta}
                \int_0^1
                        (1 + s)^{- \frac{1}{2} - \frac{\alpha}{2}} (\varepsilon^2 + s)^{- \frac{1}{2} - \frac{\beta}{2}}
                ds \notag \\
                & + C t^{
                    - \frac{\alpha}{2} - \frac{\beta}{2}
                } \varepsilon^{1 + \beta}
                \int_1^\infty
                        (1 + s)^{- \frac{1}{2} - \frac{\alpha}{2}} (\varepsilon^2 + s)^{- \frac{1}{2} - \frac{\beta}{2}}
                ds \notag \\
                & \leq C t^{
                    - \frac{\alpha}{2} - \frac{\beta}{2}
                } \varepsilon^{1 + \beta}
                \int_0^1
                        s^{- \frac{1}{2} - \frac{\beta}{2}}
                ds \notag \\
                & + C t^{
                    - \frac{\alpha}{2} - \frac{\beta}{2}
                } \varepsilon^{1 + \beta}
                \int_1^\infty
                        (1 + s)^{- \frac{1}{2} - \frac{\alpha}{2}} s^{- \frac{1}{2} - \frac{\beta}{2}}
                ds \notag \\ 
                &\leq C t^{
                    - \frac{\alpha}{2} - \frac{\beta}{2}
                } \varepsilon^{1 + \beta}.
        \end{align}
        Similar to the first inequality, we use the change of variable $s = t s^\prime$ to estimate
        \begin{align*}
            \frac{1}{\varepsilon} \int_0^\infty
                (t + s)^{- 1} \left(
                    t + \frac{s}{\varepsilon^2}
                \right)^{
                    - \frac{\alpha}{2} - \frac{\beta}{2}
                }
            ds
            & \leq C t^{
                - \frac{\alpha}{2} - \frac{\beta}{2}
            } \varepsilon^{
                \alpha + \beta
            }
            \int_0^\infty
                    (1 + s)^{-1} (\varepsilon^2 + s)^{ - \frac{\alpha}{2} - \frac{\beta}{2}}
            ds \\
            & \leq C t^{
                - \frac{\alpha}{2} - \frac{\beta}{2}
            } \varepsilon^{
                \alpha + \beta
            }
            \int_0^\infty
                    (1 + s)^{- 1} s^{ - \frac{\alpha}{2} + \frac{\beta}{2}}
            ds \\
            & \leq C t^{
                - \frac{\alpha}{2} - \frac{\beta}{2}
            } \varepsilon^{
                \alpha + \beta
            }.
        \end{align*}
        We proved (\ref{eq_estimate_fational_function_epsilon_independent_3}).
        For the last inequality, we apply the change of variables $s = \varepsilon^2 t s^\prime$ and the estimate $\varepsilon / (1 + \varepsilon^2 s)^{1/2} \leq 1 / s^{1/2}$ to get
        \begin{align*}
            & \int_0^\infty
                (t + s)^{- \frac{1}{2}} \left(
                    t + \frac{s}{\varepsilon^2}
                \right)^{
                    - \frac{1}{2} - \frac{\alpha}{2} - \frac{\beta}{2}
                }
            ds \\
            & \leq C t^{
                - \frac{\alpha}{2} - \frac{\beta}{2}
            } \varepsilon^2
            \int_0^\infty
                    (1 + \varepsilon^2 s)^{- \frac{1}{2}} (1 + s)^{ - \frac{1}{2} - \frac{\alpha}{2} - \frac{\beta}{2}}
            ds \\
            & \leq C t^{
                - \frac{\alpha}{2} - \frac{\beta}{2}
            } \varepsilon
            \int_0^\infty
                    s^{- \frac{1}{2}} (1 + s)^{ - \frac{1}{2} - \frac{\alpha}{2} - \frac{\beta}{2}}
            ds \\
            & \leq C t^{
                - \frac{\alpha}{2} - \frac{\beta}{2}
            } \varepsilon.
        \end{align*}
        We obtain (\ref{eq_estimate_fational_function_epsilon_independent_4}).
    \end{proof}

    \begin{proposition}\label{prop_estimate_composit_operator}
        Let $1 \leq p, q \leq \infty$, and $0 < s < 1$.
        Then there exists a constant $C>0$ such that
        \begin{align}
            \Vert
                e^{t \Delta} \mathbb{P}_\varepsilon \partial_j f
            \Vert_{L^\infty_H L^q_{x_3} (\Torus^3)}
            & \leq C t^{
                - \frac{1}{2}
                - \frac{1}{2} \left(
                    \frac{1}{p} - \frac{1}{q}
                \right)
            }
            \Vert
                f
            \Vert_{L^\infty_H L^p_{x_3} (\Torus^3)}, \\
            \Vert
                e^{t \Delta} \mathbb{P}_\varepsilon (- \Delta_H)^{s/2} f
            \Vert_{L^\infty_H L^q_{x_3} (\Torus^3)}
            & \leq C t^{
                - \frac{s}{2}
                - \frac{1}{2} \left(
                    \frac{1}{p} - \frac{1}{q}
                \right)
            }
            \Vert
                f
            \Vert_{L^\infty_H L^p_{x_3} (\Torus^3)}, \\
            \Vert
                e^{t \Delta} \mathbb{P}_\varepsilon \partial_3^s f
            \Vert_{L^\infty_H L^q_{x_3} (\Torus^3)}
            & \leq C t^{
                - \frac{s}{2}
                - \frac{1}{2} \left(
                    \frac{1}{p} - \frac{1}{q}
                \right)
            }
            \Vert
                f
            \Vert_{L^\infty_H L^p_{x_3} (\Torus^3)},
        \end{align}
        for all $0 < \varepsilon <1$, $t>0$, $f \in L^\infty_H L^q_{x_3} (\Torus^3)$, and $j = 1, 2, 3$.
    \end{proposition}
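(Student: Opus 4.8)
The plan is to estimate the composite operators by expanding $e^{t\Delta}\mathbb{P}_\varepsilon$ via the representation (\ref{eq_symbol_anisotropic_Helmholtz_projection_2}) of the symbol of $\mathbb{P}_\varepsilon$ and bounding each resulting scalar Fourier multiplier after writing $|\xi_\varepsilon|^{-2}$ as a time integral of Gaussians. Let $D(\xi)$ denote the symbol of the operator appended on the right, i.e.\ $D(\xi)=i\xi_j$ for the first estimate, $D(\xi)=|\xi^\prime|^{s}$ for the second, and $D(\xi)=|\xi_3|^{s}$ for the third. By (\ref{eq_symbol_anisotropic_Helmholtz_projection_2}), every entry of the matrix symbol of $e^{t\Delta}\mathbb{P}_\varepsilon$, multiplied by $D(\xi)$, is a constant multiple of one of
\[
    D(\xi)\,e^{-t|\xi|^2},\qquad
    \frac{\xi_a\xi_b}{|\xi_\varepsilon|^2}\,D(\xi)\,e^{-t|\xi|^2},\qquad
    \frac{1}{\varepsilon}\,\frac{\xi_a\xi_3}{|\xi_\varepsilon|^2}\,D(\xi)\,e^{-t|\xi|^2},\qquad
    \frac{|\xi^\prime|^2}{|\xi_\varepsilon|^2}\,D(\xi)\,e^{-t|\xi|^2},
\]
with $a,b\in\{1,2\}$. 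The first type comes from the block $\mathrm{diag}(I_2,0)$ of (\ref{eq_symbol_anisotropic_Helmholtz_projection_2}); splitting $e^{t\Delta}=e^{t\Delta_H}e^{t\partial_3^2}$, it is bounded immediately, with the claimed power of $t$, by Proposition \ref{prop_estimates_for_heat_semigroup} and the kernel bounds (\ref{eq_L_p_estimate_heat_kernel_Torus})--(\ref{eq_bound_kernel_M_1}). So the real task is to bound the three ``Riesz-type'' multipliers.

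For those I would use, for $\xi\neq0$, the identity
\[
    \frac{1}{|\xi_\varepsilon|^2}
    =\int_0^\infty e^{-\tau|\xi_\varepsilon|^2}\,d\tau
    =\int_0^\infty e^{-\tau|\xi^\prime|^2}\,e^{-\tau\xi_3^2/\varepsilon^2}\,d\tau
\]
(the zero mode contributes nothing since $D$ vanishes there), so that after multiplying by $e^{-t|\xi|^2}=e^{-t|\xi^\prime|^2}e^{-t\xi_3^2}$ each Riesz-type multiplier becomes the $\tau$-integral of a product of a \emph{horizontal} multiplier, whose symbol is $\xi_a\xi_b$ times the horizontal part of $D$ times $e^{-(t+\tau)|\xi^\prime|^2}$, and a \emph{vertical} multiplier, whose symbol is the remaining $\xi_3$-factor times the vertical part of $D$ times $e^{-(t+\tau/\varepsilon^2)\xi_3^2}$ (exactly one of the two ``parts of $D$'' being present according to whether $D=i\xi_j$ with $j\le2$, $D=i\xi_3$, $D=|\xi^\prime|^{s}$, or $D=|\xi_3|^{s}$). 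Distributing the $L^\infty_H L^p_{x_3}\to L^\infty_H L^q_{x_3}$ norm over the $\tau$-integral exactly as in the proof of Proposition \ref{prop_estimates_for_heat_semigroup}, estimating the horizontal factor by the $L^1(\Torus^2)$-norm of its kernel ($\le C(t+\tau)^{-a_1}$ with $a_1\in\{\tfrac12,1,\tfrac32\}$, shifted by $\tfrac s2$ when the horizontal part of $D$ is $|\xi^\prime|^s$, via (\ref{eq_L_p_estimate_heat_kernel_Torus})--(\ref{eq_bound_kernel_M_1})) and the vertical factor by the $L^r(\Torus)$-norm of its kernel with $\tfrac1r=1-(\tfrac1p-\tfrac1q)$ ($\le C(t+\tau/\varepsilon^2)^{-a_2-\frac12(\frac1p-\frac1q)}$ with $a_2\in\{0,\tfrac12,1\}$, shifted by $\tfrac s2$ when the vertical part of $D$ is $|\xi_3|^s$), every term reduces to a $\tau$-integral of the kind estimated in Proposition \ref{prop_estimate_fational_function_epsilon_independent}, possibly times a prefactor $1/\varepsilon$.

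The remainder is bookkeeping. In each case the $\tau$-integral matches one of (\ref{eq_estimate_fational_function_epsilon_independent_1})--(\ref{eq_estimate_fational_function_epsilon_independent_4}) with $\alpha$ equal to the order of $D$ (so $\alpha=1$ for $D=i\xi_j$, and $\alpha=s$ for $D=|\xi^\prime|^s$ or $D=|\xi_3|^s$) and $\beta=\tfrac1p-\tfrac1q$: e.g.\ the $\xi_a\xi_b$-term with $D=i\xi_j$, $j\in\{1,2\}$, gives $\int_0^\infty(t+\tau)^{-3/2}(t+\tau/\varepsilon^2)^{-\frac12(\frac1p-\frac1q)}\,d\tau$, handled by (\ref{eq_estimate_fational_function_epsilon_independent_1}); the $\varepsilon^{-1}\xi_a\xi_3$-term with $D=i\xi_3$ gives $\varepsilon^{-1}\int_0^\infty(t+\tau)^{-1/2}(t+\tau/\varepsilon^2)^{-1-\frac12(\frac1p-\frac1q)}\,d\tau$, handled by $\varepsilon^{-1}$ times (\ref{eq_estimate_fational_function_epsilon_independent_4}); and similarly for the other combinations. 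Proposition \ref{prop_estimate_fational_function_epsilon_independent} then bounds the integral by $Ct^{-\alpha/2-\frac12(\frac1p-\frac1q)}$ times a nonnegative power of $\varepsilon$, and the prefactor $1/\varepsilon$ coming from the $\xi_a\xi_3/\varepsilon$ entries is exactly absorbed by the extra power of $\varepsilon$ supplied by (\ref{eq_estimate_fational_function_epsilon_independent_2}) or (\ref{eq_estimate_fational_function_epsilon_independent_4}). Since $0<\varepsilon<1$, the surviving powers of $\varepsilon$ are $\le1$ and may be discarded, which gives the asserted bounds.

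I expect the only genuine obstacle to be conceptual, and it is exactly why (\ref{eq_symbol_anisotropic_Helmholtz_projection_2}) is flagged as ``a key formula'': in the unprocessed symbol $I_3-|\xi_\varepsilon|^{-2}\xi_\varepsilon\otimes\xi_\varepsilon$ the $(3,3)$ entry is $1-\xi_3^2\varepsilon^{-2}|\xi_\varepsilon|^{-2}$, so composing with $\partial_3$ produces $\xi_3^3\varepsilon^{-2}|\xi_\varepsilon|^{-2}e^{-t|\xi|^2}$ --- three vertical derivatives weighted by $\varepsilon^{-2}$ --- which the vertical heat factor at time $t+\tau/\varepsilon^2$ cannot absorb with a favourable power of $\varepsilon$; the rewritten form (\ref{eq_symbol_anisotropic_Helmholtz_projection_2}) never carries more than one factor $1/\varepsilon$ and relocates the extra order onto the harmless horizontal derivatives. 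The remaining care concerns the endpoint cases $p=q$ (so $\beta=0$) and $(p,q)=(1,\infty)$ (so $\beta=1$), which are excluded from the hypotheses of Proposition \ref{prop_estimate_fational_function_epsilon_independent}; there the relevant $\tau$-integrals are still convergent and can either be estimated directly (for instance $\int_0^\infty(t+\tau)^{-3/2}\,d\tau=2t^{-1/2}$) or be recovered by interpolation from the non-endpoint cases.
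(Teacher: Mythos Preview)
Your proposal is correct and follows essentially the same route as the paper: rewrite $\mathbb{P}_\varepsilon$ via the key formula (\ref{eq_symbol_anisotropic_Helmholtz_projection_2}), express $|\xi_\varepsilon|^{-2}$ as $\int_0^\infty e^{-\tau|\xi_\varepsilon|^2}\,d\tau$ so that every Riesz-type entry becomes a $\tau$-integral of a tensor of horizontal and vertical heat multipliers at times $t+\tau$ and $t+\tau/\varepsilon^2$, bound each factor by Proposition~\ref{prop_estimates_for_heat_semigroup}, and feed the resulting $\tau$-integrals into Proposition~\ref{prop_estimate_fational_function_epsilon_independent}. Your explicit remark that the endpoint cases $\tfrac1p-\tfrac1q\in\{0,1\}$ fall outside the stated hypotheses of Proposition~\ref{prop_estimate_fational_function_epsilon_independent} and must be handled directly is a point the paper's proof passes over silently.
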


    \begin{proof}
        We prove the first inequality.
        The second and third inequalities can be proved by the completely same way combining with Propositions \ref{prop_estimates_for_heat_semigroup} and \ref{prop_estimate_fational_function_epsilon_independent}.
        The formula (\ref{eq_symbol_anisotropic_Helmholtz_projection_2}) and
        \begin{align*}
                (- \Delta)^{-\frac{\alpha}{2}} f
            = \frac{1}{\Gamma (\frac{\alpha}{2})} \int_0^\infty
                s^{\frac{\alpha}{2} - 1} (K_s \ast f)
            ds
        \end{align*}
        from Section 4 of \cite{GigaGriesHieberHusseinKashiwabara2017_L_infty_L1}, where $\alpha > 0$ and $\Gamma$ is the gamma function, lead to
        \begin{align*}
            \partial_j e^{t \Delta} \mathbb{P}_\varepsilon
            & = \partial_j e^{t \Delta} \left(
                \begin{array}{cc}
                    I_2
                    & 0 \\
                    0
                    & 0
                \end{array}
            \right)
            - \partial_j e^{t \Delta} \left(
                \begin{array}{cc}
                    R^\prime \otimes R^\prime
                    & R^\prime R_3 / \varepsilon \\
                    {R^\prime}^T R_3 / \varepsilon
                    & - R_1^2 - R_2^2
                \end{array}
            \right) \\
            & = \partial_j e^{t \Delta} \left(
                \begin{array}{cc}
                    I_2
                    & 0 \\
                    0
                    & 0
                \end{array}
            \right) \\
            & -  \int_0^\infty
                \partial_j \left(
                    \begin{array}{cc}
                        \nabla_H \otimes \nabla_H
                        & \nabla_H \partial_3 / \varepsilon \\
                        {\nabla_H}^T \partial_3 / \varepsilon
                        & - \partial_1^2 - \partial^2_2
                    \end{array}
                \right)
                e^{(t + s) \Delta_H } e^{(t + s / \varepsilon^2) \partial_3^2}
            ds.
        \end{align*}
        The operator norm of the first term from $L^\infty_H L^p_{x_3} (\Torus^3)$ to $L^\infty_H L^q_{x_3} (\Torus^3)$ is bounded by Proposition \ref{prop_estimates_for_heat_semigroup}.
        We use Proposition \ref{prop_estimate_composit_operator} to estimate
        \begin{align*}
            &\left \Vert
                \partial_j \nabla_H \otimes \nabla_H \int_0^\infty
                    e^{(t + s) \Delta_H } e^{(t + s / \varepsilon^2) \partial_3^2} f
                ds
            \right \Vert_{L^\infty_H L^q_{x_3}  (\Torus^3)} \\
            & \leq C \Vert
                f
            \Vert_{L^\infty_H L^p_{x_3}  (\Torus^3)}
            \left \{
                \begin{array}{c}
                    \int_0^\infty
                        (t + s)^{- \frac{3}{2}} (t + \frac{s}{\varepsilon^2})^{
                            - \frac{1}{2} \left(
                                \frac{1}{p} - \frac{1}{q}
                            \right)
                        }
                    ds, \quad j = 1,2, \\
                    \int_0^\infty
                        (t + s)^{- 1} (t + \frac{s}{\varepsilon^2})^{
                            - \frac{1}{2} - \frac{1}{2} \left(
                                \frac{1}{p    } - \frac{1}{q}
                            \right)
                        }
                    ds, \quad j = 3.
                \end{array}
            \right.
        \end{align*}
        We use Proposition \ref{prop_estimate_fational_function_epsilon_independent} to see that
        \begin{align} \label{eq_sec2_prop_estimate_composit_operator_15}
            \begin{split}
                &\left \Vert
                    \partial_j \nabla_H \otimes \nabla_H \int_0^\infty
                        e^{(t + s) \Delta_H } e^{(t + s / \varepsilon^2)    \partial_3^2} f
                    ds
                \right \Vert_{L^\infty_H L^q_{x_3} (\Torus^3)} \\
                &\leq C t^{
                    - \frac{1}{2} - \frac{1}{2} \left(
                        \frac{1}{p} - \frac{1}{q}
                    \right)
                } \varepsilon^{\frac{1}{p} - \frac{1}{q}} \Vert
                    f
                \Vert_{L^\infty_H L^p_{x_3} (\Torus^3)}.
            \end{split}
        \end{align}
        Proposition \ref{prop_estimate_fational_function_epsilon_independent} leads to
        \begin{align*}
            & \left \Vert
                \nabla_H \frac{\partial_3}{\varepsilon} \partial_j \int_0^\infty
                    e^{(t + s) \Delta_H} e^{(t + s / \varepsilon^2) \partial_3^2} f
                ds
            \right \Vert_{L^\infty_H L^q_{x_3} (\Torus^3)} \\
            & \leq C \Vert
                f
            \Vert_{L^\infty_H L^p_{x_3}  (\Torus^3)} \left \{
                \begin{array}{l}
                    \frac{1}{\varepsilon} \int_0^\infty
                        (t + s)^{-1} (t + \frac{s}{\varepsilon^2})^{
                            - \frac{1}{2} - \frac{1}{2} \left(
                                \frac{1}{p} - \frac{1}{q}
                            \right)
                        }
                    ds, \quad j = 1,2, \\
                    \frac{1}{\varepsilon} \int_0^\infty
                        (t + s)^{- \frac{1}{2}} (t + \frac{s}{\varepsilon^2})^{
                            - 1 - \frac{1}{2} \left(
                                \frac{1}{p} - \frac{1}{q}
                            \right)
                        }
                    ds, \quad j = 3.
                \end{array}
            \right.
        \end{align*}
        Thus, Proposition \ref{prop_estimate_fational_function_epsilon_independent} implies
        \begin{align} \label{eq_sec2_prop_estimate_composit_operator_20}
            \left \Vert
                \nabla_H \frac{\partial_3}{\varepsilon} \partial_j \int_0^\infty
                    e^{(t + s) \Delta_H} e^{(t + s / \varepsilon^2) \partial_3^2} f
                ds
            \right \Vert_{L^\infty_H L^q_{x_3}  (\Torus^3)}
            \leq C t^{
                - \frac{1}{2} - \frac{1}{2} \left(
                    \frac{1}{p} - \frac{1}{q}
                \right)
            } \Vert
                f
            \Vert_{L^\infty_H L^p_{x_3}(\Torus^3)}.
        \end{align}
        The conclusion follows from (\ref{eq_sec2_prop_estimate_composit_operator_15}) and (\ref{eq_sec2_prop_estimate_composit_operator_20}).
        We proved Proposition \ref{prop_estimate_composit_operator}.
    \end{proof}

    \begin{remark}
        The formula (\ref{eq_symbol_anisotropic_Helmholtz_projection_2}) plays an essential role in the proof of Proposition \ref{prop_estimate_composit_operator}.
        If we try to estimate the norm of $e^{t \Delta} \mathbb{P}_\varepsilon \partial_3^2 / \varepsilon^2$, then we have to deal the term $\int_0^\infty \varepsilon^{-2} \partial_3^2 \partial_j e^{(t+s)\Delta_H} e^{(t+s/\varepsilon^2)\partial_3^2} f ds$ for $j=1,2$.
        However, it is impossible to get the uniform $L^\infty_H L^p_{x_3}$-bound on $\varepsilon$ since
        \begin{align*}
            & \left \Vert
                \int_0^\infty
                    \frac{
                        \partial_3^2
                    }{
                        \varepsilon^{2}
                    }
                    \partial_j e^{(t+s)\Delta_H} e^{(t+s/\varepsilon^2)  \partial_3^2} f
                ds
            \right \Vert_{L^\infty_H L^p_{x_3}(\Torus^3)} \\
            & \leq C \Vert
                f
            \Vert_{L^\infty_H L^p_{x_3}(\Torus^3)} \frac{1}{\varepsilon^2}
            \int_0^\infty
                \frac{
                    1
                }{
                    (t + s)^{\frac{1}{2}} (t + \frac{s}{\varepsilon^2})
                }
            ds \\
            & = C \Vert
                f
            \Vert_{L^\infty_H L^p_{x_3}(\Torus^3)}
            t^{
                - \frac{1}{2}
            }
            \int_0^\infty
                \frac{
                    1
                }{
                    (1 + s)^{\frac{1}{2}} (\varepsilon^2 + s)
                }
            ds,
        \end{align*}
        for a $\varepsilon$-independent constant $C>0$.
        If $\varepsilon$ tends to zero, then the integral of the right-hand-side goes to infinity as $| \log \varepsilon |$.
        Thus, in this calculations, we lose uniform estimates on $\varepsilon$.
    \end{remark}

    \begin{corollary}\label{cor_estimate_composit_operator}
        Let $1 \leq p \leq \infty$ and $T>0$.
        Then there exists a constant $C>0$ such that
        \begin{align*}
            \sup_{0< t < T} \left \Vert
                \int_0^t
                    e^{(t-s) \Delta} \mathbb{P}_\varepsilon \partial_j f (s)
                ds
            \right \Vert_{L^\infty_H L^p_{x_3} (\Torus^3)}
            & \leq C t^{1/2}
            \sup_{0< t < T} \Vert
                f (t)
            \Vert_{L^\infty_H L^p_{x_3} (\Torus^3)}
        \end{align*}
        for all $f \in L^\infty_t L^\infty_H L^p_{x_3} (\Torus^3 \times (0, T))$, $0 < \varepsilon \leq 1$, and $j = 1, 2, 3$.
    \end{corollary}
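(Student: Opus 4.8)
The plan is to reduce the claim to Proposition~\ref{prop_estimate_composit_operator} in the case $p=q$, followed by an elementary integration in time. First I would apply the Minkowski integral inequality to move the spatial norm inside the $s$-integral,
\[
\left\Vert \int_0^t e^{(t-s)\Delta}\mathbb{P}_\varepsilon\partial_j f(s)\,ds\right\Vert_{L^\infty_H L^p_{x_3}(\Torus^3)}
\le \int_0^t \left\Vert e^{(t-s)\Delta}\mathbb{P}_\varepsilon\partial_j f(s)\right\Vert_{L^\infty_H L^p_{x_3}(\Torus^3)}\,ds,
\]
which is legitimate because $f\in L^\infty_t L^\infty_H L^p_{x_3}(\Torus^3\times(0,T))$ makes the integrand measurable in $s$ and the right-hand side (as the next step shows) finite.

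Next I would invoke Proposition~\ref{prop_estimate_composit_operator} with $p=q$, which supplies a constant $C>0$, independent of $\varepsilon\in(0,1]$ and of $j\in\{1,2,3\}$, such that
\[
\left\Vert e^{(t-s)\Delta}\mathbb{P}_\varepsilon\partial_j f(s)\right\Vert_{L^\infty_H L^p_{x_3}(\Torus^3)}
\le C\,(t-s)^{-1/2}\,\Vert f(s)\Vert_{L^\infty_H L^p_{x_3}(\Torus^3)}
\le C\,(t-s)^{-1/2}\sup_{0<\tau<T}\Vert f(\tau)\Vert_{L^\infty_H L^p_{x_3}(\Torus^3)}.
\]
Since the singularity $(t-s)^{-1/2}$ is of Abel type, $\int_0^t (t-s)^{-1/2}\,ds = 2t^{1/2}$, so inserting this into the previous display and then taking the supremum over $t\in(0,T)$ yields the stated estimate with constant $2C$ (and, using $t<T$, the bound $2C\,T^{1/2}$).

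I do not expect a genuine obstacle here: the corollary is a direct consequence of Proposition~\ref{prop_estimate_composit_operator}. The only two points deserving a word of care are that the constant coming from that proposition is uniform in $\varepsilon$ on $(0,1]$ — this is exactly what is asserted there, the key being the decomposition~\eqref{eq_symbol_anisotropic_Helmholtz_projection_2} and Proposition~\ref{prop_estimate_fational_function_epsilon_independent} — and that the time kernel is integrable, so no smallness of $T$, of $\varepsilon$, or additional structure of $f$ is needed; the estimate is moreover simultaneously valid for all three derivatives $\partial_1,\partial_2,\partial_3$ because Proposition~\ref{prop_estimate_composit_operator} is.
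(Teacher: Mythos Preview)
Your proposal is correct and matches the paper's intent: the corollary is stated without proof immediately after Proposition~\ref{prop_estimate_composit_operator}, so it is meant to follow exactly by the argument you give---apply the proposition with $p=q$ inside the time integral and compute $\int_0^t (t-s)^{-1/2}\,ds = 2t^{1/2}$.
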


\section{Non-Linear Estimates in $L^\infty_H L^p_{x_3}  (\Torus^3)$}
    In this section, we show some non-linear estimates for some quadratic terms.
\subsection{Non-linear estimates for composite operators}
    The following proposition is elemental but is very useful.
    \begin{proposition} \label{prop_L_infty_L_p_estimate}
        Let $1 \leq p \leq \infty$.
        There exists a constant $C>0$ such that
        \begin{align*}
            \Vert
                f
            \Vert_{L^\infty (\Torus^3)}
            \leq \Vert
                f
            \Vert_{L^\infty_H L^p_{x_3}  (\Torus^3)}
            + \Vert
                \partial_3 f
            \Vert_{L^\infty_H L^p_{x_3}  (\Torus^3)},
        \end{align*}
        for all $f \in L^\infty_H L^p_{x_3}  (\Torus^3)$ satisfying $\partial_3 f \in L^\infty_H L^p_{x_3}  (\Torus^3)$.
        In particular, if $\overline{f} = 0$, then
        \begin{align} \label{eq_estimate_Linfty_x_by_Linfty_H-L1_3}
            \Vert
                f
            \Vert_{L^\infty (\Torus^3)}
            \leq \Vert
                \partial_3 f
            \Vert_{L^\infty_H L^p_{x_3}  (\Torus^3)}.
        \end{align}
    \end{proposition}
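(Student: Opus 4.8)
The plan is to reduce the $L^\infty(\Torus^3)$ bound to a one–dimensional Sobolev-type embedding in the vertical variable $x_3$, applied uniformly in the horizontal variable $x^\prime$. Fix $x^\prime \in \Torus^2$ and consider the one–variable function $x_3 \mapsto f(x^\prime, x_3)$ on the circle $\Torus$. I would first establish the scalar inequality
\begin{align*}
    \|g\|_{L^\infty(\Torus)}
    \leq \|g\|_{L^p(\Torus)} + \|g^\prime\|_{L^p(\Torus)}
\end{align*}
for $g \in W^{1,p}(\Torus)$, $1 \le p \le \infty$ (with the obvious modification for $p=\infty$). For $p=\infty$ this is immediate; for $p<\infty$, the standard argument is: by the fundamental theorem of calculus, for any $x_3, y_3 \in \Torus$ one has $|g(x_3)| \le |g(y_3)| + \int_\Torus |g^\prime(z)|\,dz$; averaging over $y_3 \in \Torus$ (whose measure is $2\pi$, a fixed constant absorbed into $C$) and using Hölder's inequality on both terms gives $\|g\|_{L^\infty(\Torus)} \le C\big(\|g\|_{L^p(\Torus)} + \|g^\prime\|_{L^p(\Torus)}\big)$. (If one wants the clean constant-free form as written, one can invoke the periodicity more carefully, but for the application only a uniform constant matters.) Here one should note $g$ is a.e.\ equal to an absolutely continuous representative since $g^\prime \in L^p \subset L^1$, so the pointwise statement makes sense.

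Next I would apply this scalar inequality with $g(\cdot) = f(x^\prime, \cdot)$ for each fixed $x^\prime$, yielding
\begin{align*}
    \sup_{x_3 \in \Torus} |f(x^\prime, x_3)|
    \leq \|f(x^\prime, \cdot)\|_{L^p(\Torus)} + \|\partial_3 f(x^\prime, \cdot)\|_{L^p(\Torus)}.
\end{align*}
Taking the supremum over $x^\prime \in \Torus^2$ of the left-hand side gives $\|f\|_{L^\infty(\Torus^3)}$, while the right-hand side is bounded by $\sup_{x^\prime}\|f(x^\prime,\cdot)\|_{L^p(\Torus)} + \sup_{x^\prime}\|\partial_3 f(x^\prime,\cdot)\|_{L^p(\Torus)} = \|f\|_{L^\infty_H L^p_{x_3}(\Torus^3)} + \|\partial_3 f\|_{L^\infty_H L^p_{x_3}(\Torus^3)}$, which is the claimed inequality. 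A small technical point to address is measurability/the validity of this slicing for a general $f$ in the anisotropic space: one argues by density (the inequality is stable under approximation by smooth functions, since both sides are controlled by the $L^\infty_H L^p_{x_3}$ norms of $f$ and $\partial_3 f$) or simply notes that the relevant quantities are defined via essential suprema and the Fubini-type slicing is legitimate for $f, \partial_3 f \in L^1_{loc}$.

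For the second assertion, assume $\overline{f} = 0$, i.e.\ $\int_\Torus f(x^\prime, z)\,dz = 0$ for a.e.\ $x^\prime$. Then for each fixed $x^\prime$, the scalar function $g = f(x^\prime,\cdot)$ has mean zero on $\Torus$, so the Poincaré–Wirtinger inequality on the circle gives $\|g\|_{L^p(\Torus)} \leq C \|g^\prime\|_{L^p(\Torus)}$; feeding this back into the previous display removes the zeroth-order term and yields $\|f\|_{L^\infty(\Torus^3)} \leq C\|\partial_3 f\|_{L^\infty_H L^p_{x_3}(\Torus^3)}$ (again, to get the constant-free statement one uses the sharper mean-zero argument: $g(x_3) = \frac{1}{2\pi}\int_\Torus (g(x_3) - g(y_3))\,dy_3 = \frac{1}{2\pi}\int_\Torus \int_{y_3}^{x_3} g^\prime(z)\,dz\,dy_3$, so $|g(x_3)| \le \int_\Torus |g^\prime(z)|\,dz \le (2\pi)^{1-1/p}\|g^\prime\|_{L^p(\Torus)}$, and with the paper's normalization this is absorbed). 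The main (mild) obstacle here is purely bookkeeping: being careful that the scalar embedding and Poincaré inequalities hold for the full range $1 \le p \le \infty$ including the endpoint $p = \infty$, and that the almost-everywhere slicing in $x^\prime$ together with the essential-supremum definition of the $L^\infty_H$ norm is handled rigorously; there is no genuine analytic difficulty.
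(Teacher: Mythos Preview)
Your proposal is correct and follows essentially the same route as the paper. Both arguments rest on the one--dimensional fundamental theorem of calculus estimate $|f(x^\prime,x_3)-\overline{f}(x^\prime)|\le \int_{-\pi}^{\pi}|\partial_z f(x^\prime,z)|\,dz$, then bound $|\overline{f}(x^\prime)|$ by $\|f(x^\prime,\cdot)\|_{L^p(\Torus)}$ via H\"older, and finally take the supremum in $x^\prime$; your framing as a scalar Sobolev embedding applied slice by slice is just a slightly more detailed packaging of the paper's terse proof.
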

    \begin{proof}
        By the fundamental theorem of calculus, we have the pointwise estimate
        \begin{align*}
            \left|
                f(\cdot, x_3)
                - \overline{f} (\cdot)
            \right|
            \leq \int_{- \pi}^{\pi}
                | \partial_z f(\cdot, z) |
            dz,
        \end{align*}
        for all $\pi \leq x_3 \leq \pi$.
        Applying $L^\infty_H L^p_{x_3}$-norm to the both sides and using the triangle inequality, we have (\ref{eq_estimate_Linfty_x_by_Linfty_H-L1_3}).
    \end{proof}

    To estimate $F_H (U_\varepsilon, u)$ and $\varepsilon F_3 (U_\varepsilon, u)$ in (\ref{eq_def_of_F}), we show
    \begin{proposition} \label{prop_nonlinear_estimate_div_ep_free}
        Let $1 \leq p \leq \infty$.
        Then there exists a constant $C>0$ such that
        \begin{align} \label{eq_nonlinear_estimate_div_ep_free}
            \begin{split}
                & \Vert
                    e^{t \Delta} \mathbb{P}_\varepsilon \mathrm{div}_\varepsilon \left(
                        f \otimes g
                    \right)
                \Vert_{L^\infty_{H} L^p_{x_3}} \\
                & \leq C t^{- 1 / 2} \min \left[
                        \Vert
                        f
                    \Vert_{L^\infty_H L^p_{x_3}  (\Torus^3)}
                    (
                        \Vert
                            g
                        \Vert_{L^\infty_H L^p_{x_3}  (\Torus^3)}
                        + \Vert
                            \nabla g
                        \Vert_{L^\infty_H L^p_{x_3}  (\Torus^3)}
                    ),
                \right. \\
                &\left.
                    \left(
                        \Vert
                            f
                        \Vert_{L^\infty_H L^p_{x_3} (\Torus^3)}
                        + \Vert
                            \nabla f
                        \Vert_{L^\infty_H L^p_{x_3} (\Torus^3)}
                    \right)
                    \Vert
                        g
                    \Vert_{L^\infty_H L^p_{x_3}(\Torus^3)}
                \right] \\
                & + C t^{ - 1 / 2} \Vert
                    f
                \Vert_{L^\infty_H L^p_{x_3}  (\Torus^3)}
                \Vert
                    \nabla g
                \Vert_{L^\infty_H L^p_{x_3} (\Torus^3)},
            \end{split}
        \end{align}
        for all $t>0$, $\mathrm{div}_\varepsilon$-free vector fields $f, g \in L^\infty_H L^p_{x_3} (\Torus^3)$ satisfying $\nabla f, \nabla g \in L^\infty_H L^p_{x_3} (\Torus^3)$, and $0 < \varepsilon \leq 1$.
    \end{proposition}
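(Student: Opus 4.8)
The plan is to reduce the estimate to the (already $\varepsilon$-uniform) composite-operator bound of Proposition \ref{prop_estimate_composit_operator}, applied in the diagonal case $L^\infty_H L^p_{x_3}(\Torus^3)\to L^\infty_H L^p_{x_3}(\Torus^3)$, by rewriting $\mathrm{div}_\varepsilon(f\otimes g)$ as a sum of first-order derivatives of bounded functions. With $(f\otimes g)_{ij}=f_ig_j$ and $\mathrm{div}_\varepsilon h=\mathrm{div}_H h^\prime+\partial_3h_3/\varepsilon$, and using $\tfrac1\varepsilon\partial_3=\partial_3\circ\tfrac1\varepsilon$, one has the purely algebraic identity
\[
\mathrm{div}_\varepsilon(f\otimes g)=\partial_1(g_1f)+\partial_2(g_2f)+\partial_3\!\Big(\tfrac{g_3}{\varepsilon}\,f\Big).
\]
Applying $e^{t\Delta}\mathbb{P}_\varepsilon$ termwise and invoking Proposition \ref{prop_estimate_composit_operator} (whose constant is independent of $\varepsilon$) gives
\[
\big\Vert e^{t\Delta}\mathbb{P}_\varepsilon\mathrm{div}_\varepsilon(f\otimes g)\big\Vert_{L^\infty_HL^p_{x_3}}\le Ct^{-1/2}\Big(\Vert g_1f\Vert_{L^\infty_HL^p_{x_3}}+\Vert g_2f\Vert_{L^\infty_HL^p_{x_3}}+\big\Vert\tfrac{g_3}{\varepsilon}f\big\Vert_{L^\infty_HL^p_{x_3}}\Big),
\]
so it remains to estimate the three pointwise products in $L^\infty_HL^p_{x_3}$.

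For the two horizontal products I would use Hölder's inequality in $x_3$, putting the $L^\infty_{x_3}$-norm on whichever of the two factors is convenient, and then apply the one-dimensional inequality $\Vert h(x^\prime,\cdot)\Vert_{L^\infty_{x_3}}\le\Vert h(x^\prime,\cdot)\Vert_{L^p_{x_3}}+\Vert\partial_3h(x^\prime,\cdot)\Vert_{L^p_{x_3}}$ underlying Proposition \ref{prop_L_infty_L_p_estimate}; taking the supremum over $x^\prime$ yields, for $j=1,2$,
\[
\Vert g_jf\Vert_{L^\infty_HL^p_{x_3}}\le\min\Big[\Vert f\Vert_{L^\infty_HL^p_{x_3}}\big(\Vert g\Vert_{L^\infty_HL^p_{x_3}}+\Vert\nabla g\Vert_{L^\infty_HL^p_{x_3}}\big),\ \big(\Vert f\Vert_{L^\infty_HL^p_{x_3}}+\Vert\nabla f\Vert_{L^\infty_HL^p_{x_3}}\big)\Vert g\Vert_{L^\infty_HL^p_{x_3}}\Big],
\]
which already accounts for the $\min$ in the statement. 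For the vertical product, which carries the factor $1/\varepsilon$, I would exploit that $g$ is $\mathrm{div}_\varepsilon$-free with $g_3$ vanishing at $x_3=-\pi$: then $g_3(x^\prime,x_3)/\varepsilon=-\int_{-\pi}^{x_3}\mathrm{div}_H g^\prime(x^\prime,z)\,dz$, so by Hölder on $\Torus$ (finite measure)
\[
\Vert g_3/\varepsilon\Vert_{L^\infty(\Torus^3)}\le\sup_{x^\prime}\Vert\mathrm{div}_H g^\prime(x^\prime,\cdot)\Vert_{L^1_{x_3}}\le C\Vert\nabla g\Vert_{L^\infty_HL^p_{x_3}},
\]
whence $\big\Vert\tfrac{g_3}{\varepsilon}f\big\Vert_{L^\infty_HL^p_{x_3}}\le\Vert g_3/\varepsilon\Vert_{L^\infty(\Torus^3)}\Vert f\Vert_{L^\infty_HL^p_{x_3}}\le C\Vert f\Vert_{L^\infty_HL^p_{x_3}}\Vert\nabla g\Vert_{L^\infty_HL^p_{x_3}}$. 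Collecting the three contributions produces exactly the claimed inequality, the stand-alone term $Ct^{-1/2}\Vert f\Vert\Vert\nabla g\Vert$ being kept separately because it is not dominated by the second entry of the $\min$.

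The only genuine difficulty here is the vertical term: one must be sure the $1/\varepsilon$ is harmless. It is, precisely because it multiplies $g_3$, and for a $\mathrm{div}_\varepsilon$-free field $g_3$ is itself of order $\varepsilon$ — it equals $\varepsilon$ times a vertical antiderivative of $\mathrm{div}_H g^\prime$ — so $g_3/\varepsilon$ is controlled by $\Vert\nabla g\Vert_{L^\infty_HL^p_{x_3}}$ uniformly in $\varepsilon$. Everything else is Hölder's inequality together with the already-established $\varepsilon$-uniform boundedness of $e^{t\Delta}\mathbb{P}_\varepsilon\partial_j$, which itself rests on the structural formula (\ref{eq_symbol_anisotropic_Helmholtz_projection_2}).
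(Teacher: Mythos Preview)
Your proof is correct and follows essentially the same route as the paper's own argument: the same decomposition $\mathrm{div}_\varepsilon(f\otimes g)=\mathrm{div}_H(f\,g_H^T)+\tfrac{\partial_3}{\varepsilon}(f g_3)$, the same appeal to Proposition~\ref{prop_estimate_composit_operator} for the $t^{-1/2}$ factor, Proposition~\ref{prop_L_infty_L_p_estimate} for the horizontal products (yielding the $\min$), and the $\mathrm{div}_\varepsilon$-free identity $g_3=-\varepsilon\int_{-\pi}^{x_3}\mathrm{div}_H g_H\,dz$ to cancel the $1/\varepsilon$ in the vertical term. Your explicit remark that $g_3(\cdot,-\pi)=0$ is the standing assumption invoked silently in the paper's formula~(\ref{eq_estimate_g3_by_nablagH}).
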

    \begin{proof}
        We use Proposition \ref{prop_estimate_composit_operator} and the formula
        \begin{align} \label{eq_tensor_product_division}
            e^{t \Delta} \mathbb{P}_\varepsilon \mathrm{div}_\varepsilon (f \otimes g)
            = e^{t \Delta} \mathbb{P}_\varepsilon \mathrm{div}_H \left(
                f g_H^T
            \right)
            + e^{t \Delta} \mathbb{P}_\varepsilon \frac{\partial_3}{\varepsilon} \left(
                f g_3
            \right),
        \end{align}
        to see
        \begin{align*}
            & \left \Vert
                e^{t \Delta} \mathbb{P}_\varepsilon \mathrm{div}_\varepsilon (f \otimes g)
            \right \Vert_{L^\infty_H L^p_{x_3}  (\Torus^3)} \\
            & \leq C t^{ - 1 / 2} \Vert
                f g_H^T
            \Vert_{L^\infty_H L^p_{x_3}  (\Torus^3)}
            + C \frac{t^{ - \frac{1}{2}}}{\varepsilon} \Vert
                f g_3
            \Vert_{L^\infty_H L^p_{x_3}  (\Torus^3)} \\
            & = : I_1 + I_2.
        \end{align*}
        Proposition \ref{prop_L_infty_L_p_estimate} implies
        \begin{align*}
            I_1
            & \leq C t^{ - \frac{1}{2}} \Vert
                f
            \Vert_{L^\infty_H L^p_{x_3}  (\Torus^3)}
            \left(
                \Vert
                    g_H
                \Vert_{L^\infty_H L^p_{x_3}  (\Torus^3)}
                + \Vert
                    \partial_3 g_H
                \Vert_{L^\infty_H L^p_{x_3}  (\Torus^3)}
            \right),
        \end{align*}
        and
        \begin{align*}
            I_1
            & \leq C t^{ - \frac{1}{2}} \left( \Vert
                    f
                \Vert_{L^\infty_H L^p_{x_3}  (\Torus^3)}
                + \Vert
                    \partial_3 f
                \Vert_{L^\infty_H L^p_{x_3}  (\Torus^3)}
                \right)
                \Vert
                    g_H
                \Vert_{L^\infty_H L^p_{x_3}  (\Torus^3)}.
        \end{align*}
        Since the $\mathrm{div}_\varepsilon$-free condition yields the formula
        \begin{align} \label{eq_estimate_g3_by_nablagH}
            g_3
            = - \varepsilon \int_{- \pi}^{x_3}
                \mathrm{div}_H \, g_H
            dz, \quad
            x_3 \in \Torus,
        \end{align}
        we see the factor $\varepsilon^{-1}$ in $I_2$ is canceled and estimate $I_2$ as
        \begin{align*}
            I_2
            & \leq C t^{ - \frac{1}{2}} \Vert
                f
            \Vert_{L^\infty_H L^p_{x_3}  (\Torus^3)}
            \Vert
                \nabla_H g_H
            \Vert_{L^\infty_H L^p_{x_3} (\Torus^3)}.
        \end{align*}
        Thus we obtained (\ref{eq_nonlinear_estimate_div_ep_free}).
    \end{proof}

    \begin{corollary} \label{cor_nonlinear_estimate_div_ep_free}
        Let $1 \leq p \leq \infty$, $T>0$.
        Then there exists a constant $C>0$ such that
        \begin{align} \label{eq_nonlinear_estimate_div_ep_free_time}
            \begin{split}
                & \sup_{0 < t < T} \left \Vert
                    \int_0^t
                        e^{(t-s) \Delta} \mathbb{P}_\varepsilon \mathrm{div}_\varepsilon \left(
                            f \otimes g
                        \right)
                    ds
                \right \Vert_{L^\infty_H L^p_{x_3} (\Torus^3)} \\
                & \leq C \min \left[
                    \sup_{0 < t < T} \Vert
                        f (t)
                    \Vert_{L^\infty_H L^p_{x_3} (\Torus^3)}
                    \left(
                        \sup_{0 < t < T} t^{1/2} \Vert
                            g (t)
                        \Vert_{L^\infty_H L^p_{x_3} (\Torus^3)}
                        + \sup_{0 < t < T} t^{1/2} \Vert
                            \nabla g (t)
                        \Vert_{L^\infty_H L^p_{x_3}  (\Torus^3)}
                    \right),
                \right. \\
                & \quad \quad \quad \quad \left.
                    \left(
                        \sup_{0 < t < T} t^{\frac{1}{2}} \Vert
                            f (t)
                        \Vert_{L^\infty_H L^p_{x_3} (\Torus^3)}
                        + \sup_{0 < t < T} t^{1/2} \Vert
                            \nabla f (t)
                        \Vert_{L^\infty_H L^p_{x_3} (\Torus^3)}
                    \right)
                    \sup_{0 < t < T} \Vert
                        g (t)
                    \Vert_{L^\infty_H L^p_{x_3} (\Torus^3)}
                \right] \\
                & + C \sup_{0 < t < T} \Vert
                    f (t)
                \Vert_{L^\infty_H L^p_{x_3} (\Torus^3)}
                \sup_{0 < t < T} t^{\frac{1}{2}} \Vert
                    \nabla g (t)
                \Vert_{L^\infty_H L^p_{x_3} (\Torus^3)}
            \end{split}
        \end{align}
        for all $t>0$, $\mathrm{div}_\varepsilon$-free vector fields $f, g \in L^\infty_t L^\infty_H L^p_{x_3} (\Torus^3 \times (0, T))$ satisfying $t^{\frac{1}{2}}\nabla f, t^{\frac{1}{2}} \nabla g \in L^\infty_t L^\infty_H L^p_{x_3} (\Torus^3 \times (0, T))$, and $0 < \varepsilon \leq 1$.
    \end{corollary}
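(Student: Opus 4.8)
The plan is to insert the pointwise-in-time bound of Proposition~\ref{prop_nonlinear_estimate_div_ep_free} into the Duhamel integral and to reduce the time integration to the Euler Beta function. First I would fix $0<s<t<T$ and apply Proposition~\ref{prop_nonlinear_estimate_div_ep_free} with $t$ replaced by $t-s$ and $f,g$ replaced by $f(s),g(s)$, which gives
\begin{align*}
    & \left\Vert e^{(t-s)\Delta}\mathbb{P}_\varepsilon\mathrm{div}_\varepsilon\big(f(s)\otimes g(s)\big)\right\Vert_{L^\infty_H L^p_{x_3}(\Torus^3)} \\
    & \leq C(t-s)^{-1/2}\Big( m(s) + \Vert f(s)\Vert_{L^\infty_H L^p_{x_3}(\Torus^3)}\Vert\nabla g(s)\Vert_{L^\infty_H L^p_{x_3}(\Torus^3)}\Big),
\end{align*}
where $m(s)$ denotes the minimum appearing in (\ref{eq_nonlinear_estimate_div_ep_free}) evaluated at time $s$. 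Taking $\int_0^t$ of both sides and using the triangle inequality for integrals reduces the claim to estimating $\int_0^t (t-s)^{-1/2} m(s)\,ds$ and $\int_0^t (t-s)^{-1/2}\Vert f(s)\Vert\,\Vert\nabla g(s)\Vert\,ds$.

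Next I would bound each factor by its supremum over $(0,T)$ with the appropriate power weight: $\Vert f(s)\Vert \leq \sup_{0<\tau<T}\Vert f(\tau)\Vert$, $\Vert g(s)\Vert \leq s^{-1/2}\sup_{0<\tau<T}\tau^{1/2}\Vert g(\tau)\Vert$, $\Vert\nabla g(s)\Vert \leq s^{-1/2}\sup_{0<\tau<T}\tau^{1/2}\Vert\nabla g(\tau)\Vert$, and analogously for $f$ and $\nabla f$ in the second branch of the minimum. Since $m(s)\leq A(s)$ and $m(s)\leq B(s)$ for each of the two products $A,B$ inside the minimum, one may perform this substitution in each branch separately and then take the minimum of the two resulting bounds outside the integral; this is precisely the $\min[\,\cdot\,,\,\cdot\,]$ on the right-hand side of (\ref{eq_nonlinear_estimate_div_ep_free_time}). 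After the substitutions, every remaining time integral is either $\int_0^t (t-s)^{-1/2}s^{-1/2}\,ds = B(1/2,1/2)=\pi$ or $\int_0^t (t-s)^{-1/2}\,ds = 2\sqrt{t}\leq 2\sqrt{T}$, so in all cases it is bounded by a constant depending only on $T$. Collecting the terms yields (\ref{eq_nonlinear_estimate_div_ep_free_time}).

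There is no essential difficulty here: the corollary is a routine time-integrated version of Proposition~\ref{prop_nonlinear_estimate_div_ep_free}, driven by the classical fractional-integral identity $\int_0^t(t-s)^{-1/2}s^{-1/2}\,ds=\pi$. The only point that needs a little care is the bookkeeping around the minimum: one must check that both branches, once the $s^{1/2}$-weights are inserted, still give convergent singular integrals in $s$ near $s=0$ and near $s=t$, so that the minimum genuinely commutes past the time integration, and one must make sure that the single factor left unweighted in each branch is exactly the one that appears unweighted on the right-hand side of (\ref{eq_nonlinear_estimate_div_ep_free_time}).
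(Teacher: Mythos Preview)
Your proposal is correct and follows exactly the approach the paper intends: the paper's own proof is the single sentence ``Taking $L^\infty_t$-norm to both sides of (\ref{eq_nonlinear_estimate_div_ep_free}), we have (\ref{eq_nonlinear_estimate_div_ep_free_time}),'' and what you have written is precisely the unpacking of that sentence---apply Proposition~\ref{prop_nonlinear_estimate_div_ep_free} at level $t-s$, insert the $s^{1/2}$-weighted suprema, and use $\int_0^t (t-s)^{-1/2}s^{-1/2}\,ds=\pi$. Your remark on commuting the minimum past the integral by bounding each branch separately is the only point that deserves comment, and you handle it correctly.
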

    \begin{proof}
        Taking $L^\infty_t$-norm to both sides of (\ref{eq_nonlinear_estimate_div_ep_free}), we have (\ref{eq_nonlinear_estimate_div_ep_free_time}).
    \end{proof}

    We estimate space derivatives to the quadratic terms.
    \begin{proposition} \label{prop_estimates_nabla_nonlinear_terms}
        Let $1 \leq p \leq \infty$.
        Then there exists a constant $C>0$ such that
        \begin{align} \label{eq_estimates_nabla_nonlinear_terms1}
            \begin{split}
                & \Vert
                    \nabla e^{t \Delta} \mathbb{P}_\varepsilon \mathrm{div}_\varepsilon     \left(
                        f \otimes g
                    \right)
                \Vert_{L^\infty_H L^p_{x_3} (\Torus^3)} \\
                &\leq C t^{-1} \min \left[
                    \left(
                        \Vert
                            f
                        \Vert_{L^\infty_H L^p_{x_3} (\Torus^3)}
                        + \Vert
                            \nabla f
                        \Vert_{L^\infty_H L^p_{x_3} (\Torus^3)}
                    \right)
                    \Vert
                        g
                    \Vert_{L^\infty_H L^p_{x_3} (\Torus^3)},
                \right. \\
                & \quad \quad \quad \quad \quad \quad \quad \left.
                    \Vert
                        f
                    \Vert_{L^\infty_H L^p_{x_3} (\Torus^3)}
                    \left(
                        \Vert
                            g
                        \Vert_{L^\infty_H L^p_{x_3} (\Torus^3)}
                        + \Vert
                            \nabla g
                        \Vert_{L^\infty_H L^p_{x_3} (\Torus^3)}
                    \right)
                \right] \\
                & + C t^{-1} \Vert
                    f
                \Vert_{L^\infty_H L^p_{x_3} (\Torus^3)}
                \Vert
                    \nabla g
                \Vert_{L^\infty_H L^p_{x_3} (\Torus^3)},
            \end{split}
        \end{align}
        and
        \begin{align} \label{eq_estimates_nabla_nonlinear_terms2}
            \begin{split}
                & \Vert
                    \nabla e^{t \Delta} \mathbb{P}_\varepsilon \mathrm{div} _\varepsilon \left(
                        f \otimes g
                    \right)
                \Vert_{L^\infty_H L^p_{x_3} (\Torus^3)} \\
                & \leq C t^{- 1/2} \Vert
                    \nabla f
                \Vert_{L^\infty_H L^p_{x_3} (\Torus^3)}
                \left(
                    \Vert
                        g
                    \Vert_{L^\infty_H L^p_{x_3} (\Torus^3)}
                    + \Vert
                        \nabla g
                    \Vert_{L^\infty_H L^p_{x_3} (\Torus^3)}
                \right), 
            \end{split}
        \end{align}
        for all $\mathrm{div}_\varepsilon$-free vector fields $f, g \in L^\infty_H L^p_{x_3} (\Torus^3)$ satisfying $\nabla f, \nabla g \in L^\infty_H L^p_{x_3} (\Torus^3)$, and $0 < \varepsilon \leq 1$.
    \end{proposition}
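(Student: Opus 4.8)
The plan is to follow the pattern of the proof of Proposition \ref{prop_nonlinear_estimate_div_ep_free}, the only new feature being the extra outer derivative $\nabla$. Throughout I abbreviate $\Vert \cdot \Vert = \Vert \cdot \Vert_{L^\infty_H L^p_{x_3} (\Torus^3)}$.

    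For (\ref{eq_estimates_nabla_nonlinear_terms1}) I would put one heat factor onto the outer derivative. Since all the operators in play are Fourier multipliers,
    \begin{align*}
        \nabla e^{t \Delta} \mathbb{P}_\varepsilon \mathrm{div}_\varepsilon (f \otimes g)
        = \left(
            \nabla e^{\frac{t}{2} \Delta}
        \right)
        \left(
            e^{\frac{t}{2} \Delta} \mathbb{P}_\varepsilon \mathrm{div}_\varepsilon (f \otimes g)
        \right),
    \end{align*}
    and then I would estimate the first factor by $C t^{-1/2}$ using Proposition \ref{prop_estimates_for_heat_semigroup} and the second factor by Proposition \ref{prop_nonlinear_estimate_div_ep_free}. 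Multiplying the two factors of $t^{-1/2}$ gives the $t^{-1}$, while the $\min$ and the trailing term $\Vert f \Vert \Vert \nabla g \Vert$ are inherited verbatim from Proposition \ref{prop_nonlinear_estimate_div_ep_free}.

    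For (\ref{eq_estimates_nabla_nonlinear_terms2}) the better exponent $t^{-1/2}$ forces us to route only one derivative through $e^{t \Delta} \mathbb{P}_\varepsilon$ and to place the other one on a factor directly. I would commute the outer $\nabla$ inside and apply the Leibniz rule
    \begin{align*}
        \partial_k \, \mathrm{div}_\varepsilon (f \otimes g)
        = \mathrm{div}_\varepsilon \bigl(
            (\partial_k f) \otimes g
        \bigr)
        + \mathrm{div}_\varepsilon \bigl(
            f \otimes (\partial_k g)
        \bigr), \quad k = 1, 2, 3,
    \end{align*}
    noting that $\partial_k f$ and $\partial_k g$ are again $\mathrm{div}_\varepsilon$-free, so that Proposition \ref{prop_nonlinear_estimate_div_ep_free} applies to each summand. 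Applying it to $e^{t \Delta} \mathbb{P}_\varepsilon \mathrm{div}_\varepsilon ( (\partial_k f) \otimes g )$ and taking the first entry of the $\min$ gives $C t^{-1/2} \Vert \partial_k f \Vert ( \Vert g \Vert + \Vert \nabla g \Vert )$, which is of the stated form; the summand $e^{t \Delta} \mathbb{P}_\varepsilon \mathrm{div}_\varepsilon ( f \otimes (\partial_k g) )$ is treated with the same tools used for Proposition \ref{prop_nonlinear_estimate_div_ep_free}: the tensor splitting $\mathrm{div}_\varepsilon (h \otimes \ell) = \mathrm{div}_H ( h \ell_H^T ) + \frac{\partial_3}{\varepsilon} ( h \ell_3 )$ from (\ref{eq_tensor_product_division}), the composite-operator bound $\Vert e^{t \Delta} \mathbb{P}_\varepsilon \partial_j h \Vert \le C t^{-1/2} \Vert h \Vert$ of Proposition \ref{prop_estimate_composit_operator}, the identity $(\partial_k g)_3 = \partial_k g_3 = - \varepsilon \int_{- \pi}^{x_3} \mathrm{div}_H \partial_k g_H \, d z$ which again cancels the factor $\varepsilon^{-1}$ (cf. (\ref{eq_estimate_g3_by_nablagH})), and Proposition \ref{prop_L_infty_L_p_estimate} to pass from $\Vert h \ell \Vert$ to $\Vert h \Vert_{L^\infty (\Torus^3)} \Vert \ell \Vert$ and then to $\Vert h \Vert_{L^\infty (\Torus^3)} \le \Vert h \Vert + \Vert \partial_3 h \Vert$.

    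The main obstacle, exactly as in Proposition \ref{prop_nonlinear_estimate_div_ep_free}, is to keep every constant independent of $\varepsilon \in (0, 1]$: each occurrence of the unbounded factor $\varepsilon^{-1} \partial_3$ must be matched with a third component ($g_3$, or $\partial_k g_3$) carrying an explicit prefactor $\varepsilon$ from the $\mathrm{div}_\varepsilon$-free condition (\ref{eq_estimate_g3_by_nablagH}); this is why both vector fields are assumed $\mathrm{div}_\varepsilon$-free and why it is essential that the derivative of a $\mathrm{div}_\varepsilon$-free field is again $\mathrm{div}_\varepsilon$-free. The only other delicate point is the bookkeeping of which of the norms $\Vert \cdot \Vert$, $\Vert \nabla_H \cdot \Vert$, $\Vert \partial_3 \cdot \Vert$ is assigned to which factor so that each resulting term lands inside the right-hand sides above; this is what dictates the choice of branch in the $\min$ and the use of Proposition \ref{prop_L_infty_L_p_estimate}.
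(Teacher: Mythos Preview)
Your argument for \eqref{eq_estimates_nabla_nonlinear_terms1} is correct and in fact slightly cleaner than the paper's: the paper repeats the tensor splitting \eqref{eq_tensor_product_division} and re-estimates $I_1,I_2$ directly with the extra $\nabla$, whereas you simply factor $e^{t\Delta}=e^{\frac t2\Delta}e^{\frac t2\Delta}$ and quote Proposition~\ref{prop_nonlinear_estimate_div_ep_free}. Both give the same right-hand side.

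For \eqref{eq_estimates_nabla_nonlinear_terms2} your Leibniz route has a genuine gap. The summand $e^{t\Delta}\mathbb{P}_\varepsilon\,\mathrm{div}_\varepsilon\bigl(f\otimes(\partial_k g)\bigr)$, treated with the tools you list, does not land in $C t^{-1/2}\Vert\nabla f\Vert(\Vert g\Vert+\Vert\nabla g\Vert)$. After the tensor splitting the horizontal part gives $C t^{-1/2}\Vert f\,(\partial_k g_H)\Vert$, and Proposition~\ref{prop_L_infty_L_p_estimate} forces either a term $\Vert f\Vert\,\Vert\partial_k g\Vert$ (which is not dominated by the stated right-hand side, since there is no $\Vert f\Vert$ factor there) or a term involving $\Vert\partial_3\partial_k g\Vert$. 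The vertical part, after your cancellation $(\partial_k g)_3=-\varepsilon\int_{-\pi}^{x_3}\mathrm{div}_H\partial_k g_H$, produces $C t^{-1/2}\Vert f\Vert\,\Vert\nabla_H\partial_k g_H\Vert$, again a second derivative of $g$, which is neither assumed nor allowed in the conclusion.

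The paper avoids this by \emph{not} commuting $\nabla$ inside. Instead it uses the $\mathrm{div}_\varepsilon$-free condition on $g$ to rewrite the whole nonlinearity as
\[
\mathrm{div}_\varepsilon(f\otimes g)=g_H\cdot\nabla_H f-\Bigl(\int_{-\pi}^{x_3}\mathrm{div}_H g_H\,dz\Bigr)\partial_3 f,
\]
so that every derivative already sits on $f$ and no $\partial_3/\varepsilon$ remains. One then applies $\nabla e^{t\Delta}\mathbb{P}_\varepsilon$ (bounded by $C t^{-1/2}$ via Proposition~\ref{prop_estimate_composit_operator}) to this expression and estimates the two products with Proposition~\ref{prop_L_infty_L_p_estimate}, obtaining exactly $C t^{-1/2}\Vert\nabla f\Vert(\Vert g\Vert+\Vert\nabla g\Vert)$. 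This identity is the missing ingredient in your treatment of \eqref{eq_estimates_nabla_nonlinear_terms2}.
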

    \begin{proof}
        We prove the first inequality.
        By the formula (\ref{eq_tensor_product_division}), we see that
        \begin{align*}
            & \Vert
                \nabla e^{t \Delta} \mathbb{P}_\varepsilon \mathrm{div}_\varepsilon \left(
                    f \otimes g
                \right)
            \Vert_{L^\infty_H L^p_{x_3} (\Torus^3)} \\
            & \leq \Vert
                \nabla e^{t \Delta} \mathbb{P}_\varepsilon \mathrm{div}_H \left(
                    f \otimes g_H
                \right)
            \Vert_{L^\infty_H L^p_{x_3} (\Torus^3)}
            + \left \Vert
                \nabla e^{t \Delta} \mathbb{P}_\varepsilon \left(
                    f \int_{- \pi}^{x_3}
                        \mathrm{div}_H \, g_H
                    dz
                \right)
            \right \Vert_{L^\infty_H L^p_{x_3} (\Torus^3)} \\
            & =: I_1 + I_2.
        \end{align*}
        Propositions \ref{prop_estimate_composit_operator} and \ref{eq_estimate_Linfty_x_by_Linfty_H-L1_3} imply
        \begin{align*}
            I_1
            & \leq C t^{-1} \Vert
                f \otimes g_H
            \Vert_{L^\infty_H L^p_{x_3} (\Torus^3)} \\
            &\leq C t^{-1} \min \left[
                \left(
                    \Vert
                        f
                    \Vert_{L^\infty_H L^p_{x_3} (\Torus^3)}
                    + \Vert
                        \partial_3 f
                    \Vert_{L^\infty_H L^p_{x_3} (\Torus^3)}
                \right)
                \Vert
                    g_H
                \Vert_{L^\infty_H L^p_{x_3} (\Torus^3)},
            \right. \\
            & \quad \quad \quad \quad \quad \quad \quad \left.
                \Vert
                    f
                \Vert_{L^\infty_H L^p_{x_3} (\Torus^3)}
                \left(
                    \Vert
                        g_H
                    \Vert_{L^\infty_H L^p_{x_3} (\Torus^3)}
                    + \Vert
                    \partial_3 g_H
                    \Vert_{L^\infty_H L^p_{x_3} (\Torus^3)}
                \right)
            \right],
        \end{align*}
        and
        \begin{align*}
            I_2
            \leq C t^{-1} \Vert
                f
            \Vert_{L^\infty_H L^p_{x_3} (\Torus^3)}
            \Vert
                \nabla_H g_H
            \Vert_{L^\infty_H L^p_{x_3} (\Torus^3)}.
        \end{align*}
        Next, we prove the last inequality.
        We see from the $\mathrm{div}_\varepsilon$-free condition that
        \begin{align*}
            \mathrm{div}_\varepsilon (f \otimes g)
            = g_H \cdot \nabla_H f
            - \int_{- \pi}^{x_3}
                \mathrm{div}_H \, g_H
            dz \, \partial_3 f.
        \end{align*}
        Thus we obtain by the same way as above that
        \begin{align*}
            & \Vert
                \nabla e^{t \Delta} \mathbb{P}_\varepsilon \mathrm{div}_\varepsilon \left(
                    f \otimes g
                \right)
            \Vert_{L^\infty_H L^p_{x_3} (\Torus^3)} \\
            & \leq C t^{-1/2} \Vert
                g_H \cdot \nabla_H f
            \Vert_{L^\infty_H L^p_{x_3} (\Torus^3)}
            + C t^{-\frac{1}{2}} \left \Vert
                \int_{- \pi}^{x_3}
                    \mathrm{div}_H \, g_H
                dz \partial_3 f
            \right \Vert_{L^\infty_H L^p_{x_3} (\Torus^3)} \\
            & \leq C t^{-\frac{1}{2}} \left(
                \Vert
                    g_H
                \Vert_{L^\infty_H L^p_{x_3} (\Torus^3)}
                + \Vert
                    \partial_3 g_H
                \Vert_{L^\infty_H L^p_{x_3} (\Torus^3)}
            \right) \Vert
                \nabla_H f
            \Vert_{L^\infty_H L^p_{x_3} (\Torus^3)} \\
            & + C t^{-\frac{1}{2}} \Vert
                \nabla_H g_H
            \Vert_{L^\infty_H L^p_{x_3} (\Torus^3)}
            \Vert
                \partial_3 f
            \Vert_{L^\infty_H L^p_{x_3} (\Torus^3)}.
        \end{align*}
        We obtained (\ref{eq_estimates_nabla_nonlinear_terms2}).
    \end{proof}

    \begin{corollary} \label{cor_estimates_nabla_nonlinear_terms}
        Let $1 \leq p \leq \infty$, $T>0$.
        Then there exists a positive constant $C$ such that
        \begin{align*}
            & \sup_{0 < t < T} \left \Vert
                \int_0^{t/2}
                    \nabla e^{(t-s) \Delta} \mathbb{P}_\varepsilon \mathrm{div}_\varepsilon \left(
                        f (s) \otimes g (s)
                    \right)
                ds
            \right \Vert_{L^\infty_H L^p_{x_3} (\Torus^3)} \\
            &\leq C t^{-1/2} \min \left[
                \left(
                    \sup_{0 < t < T}  t^{1/2} \Vert
                        f (t)
                    \Vert_{L^\infty_H L^p_{x_3} (\Torus^3)}
                    + \sup_{0 < t < T}  t^{1/2} \Vert
                        \nabla f (t)
                    \Vert_{L^\infty_H L^p_{x_3} (\Torus^3)}
                \right)
            \right. \\
            & \quad \quad \quad \quad \quad \quad \quad
                \times \sup_{0 < t < T} \Vert
                    g (t)
                \Vert_{L^\infty_H L^p_{x_3} (\Torus^3)}, \\
            & \quad \quad \quad \quad \quad \quad \quad
                \sup_{0 < t < T}  \Vert
                    f (t)
                \Vert_{L^\infty_H L^p_{x_3} (\Torus^3)} \\
            & \quad \quad \quad \quad \quad \quad \quad \left.
                \times \left(
                    \sup_{0 < t < T}  t^{1/2} \Vert
                        g (t)
                    \Vert_{L^\infty_H L^p_{x_3} (\Torus^3)}
                    + \sup_{0 < t < T}  t^{1/2} \Vert
                        \nabla g (t)
                    \Vert_{L^\infty_H L^p_{x_3} (\Torus^3)}
                \right)
            \right] \\
            & + C t^{-1/2} \sup_{0 < t < T}  \Vert
                f (t)
            \Vert_{L^\infty_H L^p_{x_3} (\Torus^3)}
            \sup_{0 < t < T} t^{1/2}\Vert
                \nabla g (t)
            \Vert_{L^\infty_H L^p_{x_3} (\Torus^3)},
        \end{align*}
        and
        \begin{align*}
            & \sup_{0 < t < T} \left \Vert
                \int_{t/2}^t
                    \nabla e^{(t-s) \Delta} \mathbb{P}_\varepsilon \mathrm{div}_\varepsilon \left(
                        f (s) \otimes g (s)
                    \right)
                ds
            \right \Vert_{L^\infty_H L^p_{x_3} (\Torus^3)} \\
            & \leq C t^{- 1/2} \sup_{0 < t < T} \Vert
                \nabla f (t)
            \Vert_{L^\infty_H L^p_{x_3} (\Torus^3)} \\
            & \times \left(
                \sup_{0 < t < T} t^{1/2} \Vert
                    g (t)
                \Vert_{L^\infty_H L^p_{x_3} (\Torus^3)}
                + \sup_{0 < t < T} t^{1/2} \Vert
                    \nabla g (t)
                \Vert_{L^\infty_H L^p_{x_3} (\Torus^3)}
            \right)
        \end{align*}
        for all $\mathrm{div}_\varepsilon$-free vector fields $f, g \in L^\infty_t L^\infty_H L^p_{x_3} (\Torus^3 \times (0, T))$ satisfying $t^{\frac{1}{2}} \nabla f, t^{\frac{1}{2}} \nabla g \in L^\infty_t L^\infty_H L^p_{x_3} (\Torus^3\times (0, T))$, and $0 < \varepsilon <1$.
    \end{corollary}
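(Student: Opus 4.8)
The plan is to split the Duhamel integral at the midpoint $s=t/2$ and route each half to the appropriate pointwise‑in‑time bound of Proposition~\ref{prop_estimates_nabla_nonlinear_terms}. On $[0,t/2]$ one has $t-s\ge t/2$, so the non‑integrable weight $(t-s)^{-1}$ occurring in (\ref{eq_estimates_nabla_nonlinear_terms1}) is harmless there and is dominated by $2t^{-1}$; on $[t/2,t]$ one has $s\ge t/2$, and there one uses instead the estimate (\ref{eq_estimates_nabla_nonlinear_terms2}), whose weight $(t-s)^{-1/2}$ is integrable up to $s=t$. This is the derivative analogue of the splitting already used in the simpler Corollary~\ref{cor_nonlinear_estimate_div_ep_free}.

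For the piece $\int_0^{t/2}$ I would apply (\ref{eq_estimates_nabla_nonlinear_terms1}) with $f(s),g(s)$ in place of $f,g$, bound $(t-s)^{-1}\le 2t^{-1}$, and then convert the $s$‑dependent norms into time‑weighted suprema via $\|h(s)\|_{L^\infty_H L^p_{x_3}(\Torus^3)}\le s^{-1/2}\sup_{0<\tau<T}\tau^{1/2}\|h(\tau)\|_{L^\infty_H L^p_{x_3}(\Torus^3)}$ for whichever of the two factors in each product is to carry the $t^{1/2}$‑weight, leaving the other in $\sup_\tau\|\cdot\|$. The residual $s$‑integral is the elementary $\int_0^{t/2}(t-s)^{-1}s^{-1/2}\,ds\le 2t^{-1}\int_0^{t/2}s^{-1/2}\,ds=Ct^{-1/2}$, which produces the prefactor $t^{-1/2}$ in the statement; the minimum structure is preserved because $\int_0^{t/2}(t-s)^{-1}\min[A(s),B(s)]\,ds\le\min[\int_0^{t/2}(t-s)^{-1}A(s)\,ds,\ \int_0^{t/2}(t-s)^{-1}B(s)\,ds]$, so the two branches are estimated separately (weight on the $f$‑group on the branch $(\|f\|+\|\nabla f\|)\|g\|$, on the $g$‑group on the branch $\|f\|(\|g\|+\|\nabla g\|)$), with the same integral appearing each time, and the extra term $\|f\|\,\|\nabla g\|$ handled identically with the weight on $\nabla g$. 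For the piece $\int_{t/2}^t$ I would apply (\ref{eq_estimates_nabla_nonlinear_terms2}), use $s\ge t/2$ to pull the $s$‑powers out at $s=t/2$ (so $s^{-1/2}\le(t/2)^{-1/2}$ on this range) while keeping $\|\nabla f(s)\|$ inside the relevant supremum, and reduce matters to $\int_{t/2}^t(t-s)^{-1/2}\,ds=\sqrt{2t}$; multiplying by the extracted powers of $t$ gives the asserted bound. Finally one takes the supremum over $0<t<T$.

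Since every ingredient is already available or completely standard — the two pointwise estimates of Proposition~\ref{prop_estimates_nabla_nonlinear_terms}, the Beta‑type integrals $\int_0^{t/2}(t-s)^{-1}s^{-1/2}\,ds$ and $\int_{t/2}^t(t-s)^{-1/2}\,ds$, and the monotonicity $\int\min\le\min\int$ — I expect no genuine obstacle here. The only point requiring care is the bookkeeping: in each product and on each branch of the minimum one must decide which factor absorbs the $s^{-1/2}$ weight so that the resulting power of $t$ and the weighted suprema match the stated form, and the split point (here $t/2$) must be chosen so that it simultaneously keeps the non‑integrable weight $(t-s)^{-1}$ away from $s=t$ on the left half and keeps the factor $s^{-1}$, which would arise from weighting two factors at once, away from $s=0$ on the right half.
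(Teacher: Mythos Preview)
Your proposal is correct and follows the same route as the paper: use the pointwise bound (\ref{eq_estimates_nabla_nonlinear_terms1}) on the interval $[0,t/2]$ (where $(t-s)^{-1}\le 2t^{-1}$, leaving an integrable $s^{-1/2}$) and the pointwise bound (\ref{eq_estimates_nabla_nonlinear_terms2}) on $[t/2,t]$ (where $(t-s)^{-1/2}$ is integrable and $s^{-1/2}\le (t/2)^{-1/2}$). The paper's own proof is just two sentences to this effect; your write-up supplies the bookkeeping details the paper omits, including the correct handling of the $\min$ via $\int\min\le\min\int$.
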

    \begin{proof}
        The first inequality follows from (\ref{eq_estimates_nabla_nonlinear_terms1}) by integration over $(0,t)$.
        We apply the same way to (\ref{eq_estimates_nabla_nonlinear_terms2}) and the interval $(\frac{t}{2}, t)$ to get the second inequality.
    \end{proof}

\subsection{Estimates for $\tilde{F}(v, w)$}

    In this section, we establish $\varepsilon$-independent bounds in $L^\infty_H L^p_{x_3}  (\Torus^3)$ for
    \begin{align*}
        & t^{\alpha / 2} \partial_x^\alpha e^{t \Delta} \mathbb{P}_\varepsilon \tilde{F} (v,w)
        \quad \text{for} \quad \alpha = 0, 1.
    \end{align*}
    Note that by the assumption of Theorem \ref{thm_main} we see that
    \begin{align*}
        t^{\alpha/2} \Vert
            \nabla_H \nabla^\alpha v
        \Vert_{L^\infty_H L^p_{x_3}  (\Torus^3)}
        < \infty
        \quad \text{for} \quad \alpha = 0, 1.
    \end{align*}
    for the solution $v$ to (\ref{eq_primitive}).

    \begin{proposition} \label{prop_estimate_for_tilde_F_10}
        Let $\alpha=0,1$ and $T>0$.
        Then there exists a constant $C>0$ such that
        \begin{align} \label{eq_estimte_for_tilde_F_10}
            &\sup_{0<t<T} t^{\alpha/2} \left \Vert
                \int_0^t
                    \nabla^{\alpha} e^{(t -s) \Delta} \mathbb{P}_\varepsilon \int_{- \pi}^{x_3}
                        \left(
                            \begin{array}{c}
                                0\\
                                0\\
                                \mathrm{div}_H \left(
                                    f (s) \cdot \nabla g (s)
                                \right)
                            \end{array}
                        \right)
                    d z
                ds
            \right \Vert_{L^\infty_H L^p_{x_3}  (\Torus^3)}
            & \leq C, \\ \label{eq_estimte_for_tilde_F_15}
            &\sup_{0<t<T} t^{\alpha/2} \left \Vert
                \int_0^t
                    \nabla^{\alpha} e^{(t -s) \Delta} \mathbb{P}_\varepsilon \int_{- \pi}^{x_3}
                        \left(
                            \begin{array}{c}
                                0\\
                                0\\
                                \mathrm{div}_H \left(
                                    (\mathrm{div}_H f (s)) g (s)
                                \right)
                            \end{array}
                        \right)
                    d z
                ds
            \right \Vert_{L^\infty_H L^p_{x_3}  (\Torus^3)}
            & \leq C.
        \end{align}
        for all $0< \varepsilon \leq 1$ and two-dimensional vector fields $f, \, g \in C_t L^\infty_H L^p_{x_3} (\Torus^3 \times (0, T))$ satisfying
        \begin{align} \label{eq_regularity_assumption_for_vj}
            \sup_{0<t<T} t^{\alpha/2} \Vert
                \nabla_H \nabla^\alpha v_j (t)
            \Vert_{L^\infty_H L^p_{x_3} (\Torus^3)}
            < \infty,
            \quad j = 0,1.
        \end{align}
    \end{proposition}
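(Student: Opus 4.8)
The plan is to prove \eqref{eq_estimte_for_tilde_F_10} and \eqref{eq_estimte_for_tilde_F_15} together, by first rearranging the vertical antiderivative and the horizontal divergence so that the $\varepsilon$-uniformly bounded composite operators of Proposition~\ref{prop_estimate_composit_operator} act, and then reducing matters to a pointwise-in-time bound for the quadratic terms. Since $\partial_k$ ($k=1,2$) commutes with $\mathbb{P}_\varepsilon$, with $e^{t\Delta}$, and with $\int_{-\pi}^{x_3}(\,\cdot\,)\,dz$, while $\mathrm{div}_H$ acts only in $x'$, I rewrite, for a two-dimensional vector field $h$,
\[
\Bigl(0,\,0,\,\int_{-\pi}^{x_3}\mathrm{div}_H h\,dz\Bigr)^{T}=\sum_{k=1}^{2}\partial_k\Bigl(0,\,0,\,\int_{-\pi}^{x_3}h_k\,dz\Bigr)^{T},
\]
and apply this with $h=f\cdot\nabla g$ for \eqref{eq_estimte_for_tilde_F_10} and with $h=(\mathrm{div}_H f)\,g$ for \eqref{eq_estimte_for_tilde_F_15}. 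I also record the elementary bound $\bigl\|\int_{-\pi}^{x_3}\phi(\,\cdot\,,z)\,dz\bigr\|_{L^\infty_H L^r_{x_3}}\le C\|\phi\|_{L^\infty_H L^r_{x_3}}$, valid for every $1\le r\le\infty$ because the antiderivative is pointwise dominated by $\|\phi(x',\cdot)\|_{L^1_{x_3}}\le C\|\phi(x',\cdot)\|_{L^r_{x_3}}$ on the finite torus. Hence every term of \eqref{eq_estimte_for_tilde_F_10}--\eqref{eq_estimte_for_tilde_F_15} is reduced to bounding $\int_0^t\nabla^{\alpha}e^{(t-s)\Delta}\mathbb{P}_\varepsilon\partial_k\bigl(0,0,\int_{-\pi}^{x_3}h_k(s)\,dz\bigr)^{T}\,ds$, with the inner factor dominated in $L^\infty_H L^p_{x_3}$ by $\|h(s)\|_{L^\infty_H L^p_{x_3}}$.

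For $\alpha=0$, Proposition~\ref{prop_estimate_composit_operator} with $p=q$ yields $\|e^{(t-s)\Delta}\mathbb{P}_\varepsilon\partial_k(0,0,\psi)^{T}\|_{L^\infty_H L^p_{x_3}}\le C(t-s)^{-1/2}\|\psi\|_{L^\infty_H L^p_{x_3}}$, so the left-hand side of \eqref{eq_estimte_for_tilde_F_10} is at most $C\int_0^t(t-s)^{-1/2}\|(f\cdot\nabla g)(s)\|_{L^\infty_H L^p_{x_3}}\,ds$, and similarly for \eqref{eq_estimte_for_tilde_F_15} with $\|((\mathrm{div}_H f)g)(s)\|_{L^\infty_H L^p_{x_3}}$. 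It then remains to bound the quadratic term uniformly in $s\in(0,T)$, allowing an $s^{-1/2}$ weight. For this I split the product so that the horizontal derivative sits on one factor and the other is placed in $L^\infty(\Torus^3)$: when one factor is a vertical average this is immediate, since such an average lies in $L^\infty(\Torus^3)$ as soon as it lies in $L^\infty_H L^p_{x_3}$; for the remaining terms I invoke Proposition~\ref{prop_L_infty_L_p_estimate} together with the regularity assumption \eqref{eq_regularity_assumption_for_vj}, which supplies the missing $L^\infty(\Torus^3)$-bound (with a $t^{1/2}$ weight) on the relevant factor through its horizontal derivatives. This gives $\|h(s)\|_{L^\infty_H L^p_{x_3}}\le Cs^{-1/2}$ (or an $s$-independent bound), and since $\int_0^t(t-s)^{-1/2}s^{-1/2}\,ds=\pi$ and $T<\infty$, the resulting bound is uniform in $t$, which proves the case $\alpha=0$.

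For $\alpha=1$ I split $\int_0^t=\int_0^{t/2}+\int_{t/2}^t$, in the spirit of Proposition~\ref{prop_estimates_nabla_nonlinear_terms} and Corollary~\ref{cor_estimates_nabla_nonlinear_terms}. On $(0,t/2)$ the extra derivative is put on the heat kernel, $\nabla e^{(t-s)\Delta}=(\nabla e^{\frac{t-s}{2}\Delta})\,e^{\frac{t-s}{2}\Delta}$, with $\nabla e^{\frac{t-s}{2}\Delta}$ bounded by $C(t-s)^{-1/2}$ via Proposition~\ref{prop_estimates_for_heat_semigroup} and $e^{\frac{t-s}{2}\Delta}\mathbb{P}_\varepsilon\partial_k$ handled by Proposition~\ref{prop_estimate_composit_operator}; since $t-s\ge t/2$ there, the resulting kernel $(t-s)^{-1}$ is integrable and produces a bound $\le Ct^{-1/2}$, absorbed by the prefactor $t^{1/2}$. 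On $(t/2,t)$ the extra derivative is commuted onto the data, $\nabla e^{(t-s)\Delta}\mathbb{P}_\varepsilon\partial_k(0,0,\int_{-\pi}^{x_3}h_k\,dz)^{T}=e^{(t-s)\Delta}\mathbb{P}_\varepsilon\partial_k(0,0,\nabla\!\int_{-\pi}^{x_3}h_k\,dz)^{T}$; the vertical derivative of the inner integral is $h_k$ itself and the horizontal derivatives give $\int_{-\pi}^{x_3}\nabla_H h_k\,dz$, so that $\nabla_H h$ is estimated by the product rule exactly as for $\alpha=0$, now invoking the $t^{1/2}$-weighted part of \eqref{eq_regularity_assumption_for_vj} and again Proposition~\ref{prop_L_infty_L_p_estimate}. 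Tracking the weights and using $T<\infty$, the contributions of both subintervals, multiplied by $t^{1/2}$, remain bounded, which proves the case $\alpha=1$.

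The step I expect to be the main obstacle is exactly this quadratic estimate performed without vertical regularity: one must control a product of two velocity-type factors (and, after the differentiation in the $\alpha=1$ case, of such a factor and a derivative of one) in an anisotropic norm using only horizontal derivatives, so that one factor can be sent into $L^\infty(\Torus^3)$ through Proposition~\ref{prop_L_infty_L_p_estimate} without ever differentiating in $x_3$ — precisely the feature that hypothesis \eqref{eq_regularity_assumption_for_vj} is designed to make available, the endpoint $p=1$ (where there is no room to trade integrability in $x_3$) being the most delicate. The secondary technical chore is the bookkeeping of the time weights in the $\alpha=1$ case across the splitting $\int_0^{t/2}+\int_{t/2}^t$.
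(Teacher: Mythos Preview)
Your $\alpha=0$ argument coincides with the paper's: pull $\mathrm{div}_H$ out as a $\partial_k$, absorb it into the composite operator $e^{(t-s)\Delta}\mathbb{P}_\varepsilon\partial_k$ via Proposition~\ref{prop_estimate_composit_operator}, bound the antiderivative trivially, and estimate the product by putting one factor in $L^\infty(\Torus^3)$ via Proposition~\ref{prop_L_infty_L_p_estimate}.

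For $\alpha=1$ you take a genuinely different route. The paper does \emph{not} split $\int_0^t=\int_0^{t/2}+\int_{t/2}^t$; instead it inserts a fractional power $(-\Delta_H)^{s/2}(-\Delta_H)^{-s/2}$ and uses the interpolation inequality $\Vert\nabla_H(-\Delta_H)^{-s/2}\phi\Vert\le C\Vert\phi\Vert^{s}\Vert\nabla_H\phi\Vert^{1-s}$ (Lemma~3.2 of \cite{GigaGriesHieberHusseinKashiwabara2017_L_infty_L1}) with $s=2/3$, so that the time kernel becomes $(t-s)^{-5/6}$ and is integrable over the whole interval, at the price of needing $\Vert\nabla_H(f\cdot\nabla_H g)\Vert$. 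Your splitting avoids fractional calculus entirely and relies only on the integer-order composite operator bounds; it leads to the same regularity demands (in particular $\sup_t t^{1/2}\Vert\nabla_H^2 g(t)\Vert<\infty$, which is exactly what the Remark after the proposition isolates). Both approaches work here; the paper's has the mild advantage of producing a single estimate over $(0,t)$, while yours is more elementary and closer in spirit to Corollary~\ref{cor_estimates_nabla_nonlinear_terms}.

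One point to correct: you repeatedly say that Proposition~\ref{prop_L_infty_L_p_estimate} yields the $L^\infty(\Torus^3)$ bound ``through its horizontal derivatives'' and that the argument proceeds ``without ever differentiating in $x_3$''. That is not what Proposition~\ref{prop_L_infty_L_p_estimate} says: it gives $\Vert\phi\Vert_{L^\infty}\le\Vert\phi\Vert_{L^\infty_H L^p_{x_3}}+\Vert\partial_3\phi\Vert_{L^\infty_H L^p_{x_3}}$, i.e.\ the missing $L^\infty$ control comes from the \emph{vertical} derivative. The paper's proof uses $\Vert\partial_3 f(s)\Vert$ and $\Vert\nabla_H\partial_3 f(s)\Vert$ explicitly, with a $s^{1/2}$ weight; these are available because the solution $v$ of the primitive equations satisfies $\sup_t t^{1/2}\Vert\nabla v(t)\Vert<\infty$ (the standard Fujita--Kato bound), not because of the extra horizontal assumption. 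Your bookkeeping goes through once you track these $\partial_3$-terms honestly, but your narrative about ``no vertical regularity'' misdescribes both your own argument and the paper's.
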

    \begin{proof}
        We first consider the first inequality of the case $\alpha=0$.
        By Propositions \ref{prop_estimate_composit_operator}
        \begin{align*}
            & \left \Vert
                \int_0^t
                    e^{(t -s) \Delta} \mathbb{P}_\varepsilon \int_{- \pi}^{x_3}
                        \left(
                            \begin{array}{c}
                                0\\
                                0\\
                                \mathrm{div}_H \left(
                                    f (s) \cdot \nabla g (s)
                                \right)
                            \end{array}
                        \right)
                    d z
                ds
            \right \Vert_{L^\infty_H L^p_{x_3}  (\Torus^3)}\\
            & \leq C \int_0^t
                (t -s )^{- \frac{1}{2}} \left \Vert
                    f (s) \cdot \nabla_H g (s)
                \right \Vert_{L^\infty_H L^p_{x_3}  (\Torus^3)}
            ds \\
            & \leq C \int_0^t
                (t -s )^{- \frac{1}{2}} \left(
                    \Vert
                        f (s)
                    \Vert_{L^\infty_H L^p_{x_3}  (\Torus^3)}
                    + \Vert
                        \partial_3 f (s)
                    \Vert_{L^\infty_H L^p_{x_3}  (\Torus^3)}
                \right)
                \Vert
                    \nabla_H g (s)
                \Vert_{L^\infty_H L^p_{x_3}  (\Torus^3)}
            ds \\
            & \leq C \left(
                \sup_{0<t<T} t^{\frac{1}{2}}
                    \Vert
                        f (t)
                    \Vert_{L^\infty_H L^p_{x_3}  (\Torus^3)}
                + \sup_{0<t<T} t^{\frac{1}{2}}
                    \Vert
                        \partial_3 f (t)
                    \Vert_{L^\infty_H L^p_{x_3}  (\Torus^3)}
            \right)
            \sup_{0<t<T} \Vert
               \nabla_H g (t)
            \Vert_{L^\infty_H L^p_{x_3}  (\Torus^3)}.
        \end{align*}
        For $\alpha=1$ we see from Propositions \ref{prop_estimate_composit_operator} and the interpolation inequality for the horizontal derivative from Lemman 3.2 of Giga {\it et al.} \cite{GigaGriesHieberHusseinKashiwabara2017_L_infty_L1}
        \begin{align} \label{eq_interpolation_ineq_from_Giga_etal1}
            \Vert
                \nabla_H (- \Delta)^{-s/2} f
            \Vert_{L^\infty_H L^p_{x_3} (\Torus^3)}
            \leq C \Vert
                f
            \Vert_{L^\infty_H L^p_{x_3} (\Torus^3)}^{\frac{s}{2}}
            \Vert
                \nabla_H f
            \Vert_{L^\infty_H L^p_{x_3} (\Torus^3)}^{\frac{1 - s}{2}},
            \quad s \in (0,1),
        \end{align}
        that
        \begin{align*}
            & \left \Vert
                \nabla \int_0^t
                    e^{(t - s) \Delta} \mathbb{P}_\varepsilon \int_{- \pi}^{x_3}
                        \left(
                            \begin{array}{c}
                                0\\
                                0\\
                                \mathrm{div}_H \left(
                                    f (s) \cdot \nabla g (s)
                                \right)
                            \end{array}
                        \right)
                    d \zeta
                ds
            \right \Vert_{L^\infty_H L^p_{x_3}  (\Torus^3)}\\
            & \leq C \int_0^t
                (t - s)^{-\frac{1}{2} - \frac{1}{3}} \Vert
                    f (s) \cdot \nabla_H g (s)
                \Vert_{L^\infty_H L^p_{x_3}  (\Torus^3)}^{2/3}
                \Vert
                    \nabla_H \left(
                        f (s) \cdot \nabla_H g (s)
                    \right)
                \Vert_{L^\infty_H L^p_{x_3}  (\Torus^3)}^{1/3}
            ds \\
            & \leq C \int_0^t
                (t - s)^{-\frac{5}{6}} \left(
                    \Vert
                        f (s)
                    \Vert_{L^\infty_H L^p_{x_3}  (\Torus^3)}
                    + \Vert
                        \partial_3 f (s)
                    \Vert_{L^\infty_H L^p_{x_3}  (\Torus^3)}
                \right)^{2/3}
                \Vert
                    \nabla_H g (s)
                \Vert_{L^\infty_H L^p_{x_3}  (\Torus^3)}^{2/3} \\
            & \quad
                \times \left(
                    \Vert
                        \nabla_H f (s)
                    \Vert_{L^\infty_H L^p_{x_3}  (\Torus^3)}
                    + \Vert
                        \nabla_H \partial_3 f (s)
                    \Vert_{L^\infty_H L^p_{x_3}  (\Torus^3)}
                \right)^{1/3}
                \Vert
                        \nabla_H g (s)
                \Vert_{L^\infty_H L^p_{x_3}  (\Torus^3)}^{1/3}
            ds \\
            & + C \int_0^t
                (t - s)^{-\frac{5}{6}} \left(
                    \Vert
                        f (s)
                    \Vert_{L^\infty_H L^p_{x_3}  (\Torus^3)}
                    + \Vert
                        \partial_3 f (s)
                    \Vert_{L^\infty_H L^p_{x_3}  (\Torus^3)}
                \right)^{2/3}
                \Vert
                    \nabla_H g (s)
                \Vert_{L^\infty_H L^p_{x_3}  (\Torus^3)}^{2/3} \\
            & \quad
                \times \left(
                    \Vert
                        f (s)
                    \Vert_{L^\infty_H L^p_{x_3}  (\Torus^3)}
                    + \Vert
                        \partial_3 f (s)
                    \Vert_{L^\infty_H L^p_{x_3}  (\Torus^3)}
                \right)^{1/3}
                \Vert
                        \nabla_H^2 g (s)
                \Vert_{L^\infty_H L^p_{x_3}  (\Torus^3)}^{1/3}
            ds \\
            & \leq C t^{-1/2} \left(
                \sup_{0<t<T} t^{1/2} \Vert
                    f (t)
                \Vert_{L^\infty_H L^p_{x_3}  (\Torus^3)}
                + \sup_{0<t<T} t^{1/2} \Vert
                    \partial_3 f (s)
                \Vert_{L^\infty_H L^p_{x_3}  (\Torus^3)}
            \right)^{2/3} \\
            & \times \left(
                \sup_{0<t<T} \Vert
                    \nabla_H g (t)
                \Vert_{L^\infty_H L^p_{x_3}  (\Torus^3)}
            \right)^{2/3} \\
            & \times \left(
                \sup_{0<t<T} t^{1/2} \Vert
                    \nabla_H f (t)
                \Vert_{L^\infty_H L^p_{x_3}  (\Torus^3)}
                + \sup_{0<t<T} t^{1/2} \Vert
                    \nabla_H \partial_3 f (t)
                \Vert_{L^\infty_H L^p_{x_3}  (\Torus^3)}
            \right)^{1/3} \\
            & \times \left(
                \sup_{0<t<T} t^{1/2} \Vert
                    \nabla_H g (t)
                \Vert_{L^\infty_H L^p_{x_3}  (\Torus^3)}
            \right)^{1/3} \\
            & + C t^{-1/2} \left(
                \sup_{0<t<T} t^{1/2} \Vert
                    f (t)
                \Vert_{L^\infty_H L^p_{x_3}  (\Torus^3)}
                + \sup_{0<t<T} t^{1/2} \Vert
                    \partial_3 f (t)
                \Vert_{L^\infty_H L^p_{x_3}  (\Torus^3)}
            \right)^{2/3} \\
            & \times \left(
                \sup_{0<t<T} \Vert
                    \nabla_H g (t)
                \Vert_{L^\infty_H L^p_{x_3}  (\Torus^3)}
            \right)^{2/3} \\
            & \times \left(
                \sup_{0 < t < T} t^{1/2} \Vert
                    f (t)
                \Vert_{L^\infty_H L^p_{x_3}  (\Torus^3)}
                + \sup_{0<t<T} t^{1/2} \Vert
                    \partial_3 f (t)
                \Vert_{L^\infty_H L^p_{x_3}  (\Torus^3)}
            \right)^{1/3} \\
            & \times \left(
                \sup_{0<t<T} t^{1/2} \Vert
                    \nabla_H^2 g (t)
                \Vert_{L^\infty_H L^p_{x_3}  (\Torus^3)}
            \right)^{1/3}.
        \end{align*}
        This estimate implies (\ref{eq_estimte_for_tilde_F_10}).
        In the estimate (\ref{eq_estimte_for_tilde_F_15}), we change the role of $f$ and $g$ and use the same way as above, then we obtain (\ref{eq_estimte_for_tilde_F_15}).
    \end{proof}

    \begin{remark}
        In the proof of Proposition \ref{prop_estimate_for_tilde_F_10}, we used $\sup_{0<t<T} t^{1/2} \Vert \nabla_H^2 g (t) \Vert_{L^\infty_H L^p_{x_3}  (\Torus^3)}$.
        This is the why we imposed the additional regularity condition for $v_0$ in Theorem \ref{thm_main} with respect to the horizontal variable.
    \end{remark}

    \begin{proposition} \label{prop_estimates_for_tilde_F}
        Let $\alpha=0,1$ and $T>0$.
        Then there exists a constant $C>0$ such that
        \begin{align} \label{eq_estimte_for_tilde_F_20}
            &\sup_{0<t<T} t^{\alpha/2} \left \Vert
                \int_0^t
                    \nabla^{\alpha} e^{(t -s) \Delta} \mathbb{P}_\varepsilon \int_{- \pi}^{x_3}
                        \left(
                            \begin{array}{c}
                                0\\
                                0\\
                                \mathrm{div}_H \left(
                                    \int_{- \pi}^z
                                        \mathrm{div}_H f (s)
                                    d \zeta g (s)
                                \right)
                            \end{array}
                        \right)
                    dz
                ds
            \right \Vert_{L^\infty_H L^p_{x_3}  (\Torus^3)}
            \leq C,
        \end{align}
        for all two-dimensional vector fields $f, \, g \in L^\infty_H L^p_{x_3} (\Torus^3)$ satisfying (\ref{eq_regularity_assumption_for_vj}) and $0< \varepsilon \leq 1$.
    \end{proposition}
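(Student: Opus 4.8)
The plan is to follow the proof of Proposition~\ref{prop_estimate_for_tilde_F_10}, the only new feature being the double vertical primitive, which I would first convert into a single horizontal divergence. Set
\[
    \phi(s) := \int_{-\pi}^{z} \mathrm{div}_H \, f(s) \, d\zeta ,
    \qquad
    G(s) := \int_{-\pi}^{x_3} \phi(s) \, g(s) \, dz ,
\]
the latter a two-dimensional vector field depending on $(x^\prime, x_3)$. Since the third component of the vector field inside the time integral of~(\ref{eq_estimte_for_tilde_F_20}) equals $\int_{-\pi}^{x_3} \mathrm{div}_H(\phi(s) g(s)) \, dz = \mathrm{div}_H G(s)$, we can write
\[
    \int_{-\pi}^{x_3}
        \left(
            \begin{array}{c}
                0 \\ 0 \\
                \mathrm{div}_H \left( \int_{-\pi}^{z} \mathrm{div}_H f(s) \, d\zeta \; g(s) \right)
            \end{array}
        \right)
    dz
    =
    \partial_1 \left( \begin{array}{c} 0 \\ 0 \\ G_1(s) \end{array} \right)
    + \partial_2 \left( \begin{array}{c} 0 \\ 0 \\ G_2(s) \end{array} \right),
\]
so the integrand is a sum of terms of the form $\nabla^{\alpha} e^{(t-s)\Delta} \mathbb{P}_\varepsilon \partial_j (0, 0, G_j(s))^T$ with $j \in \{1,2\}$, which is exactly the structure handled in Section~2.

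Next I would bound $G$ and $\nabla_H G$ in $L^\infty_H L^p_{x_3}(\Torus^3)$ using only \emph{horizontal} derivatives of $f$. By the fundamental theorem of calculus and the embedding $L^p(\Torus) \hookrightarrow L^1(\Torus)$ (finite measure),
\[
    \Vert \phi(s) \Vert_{L^\infty(\Torus^3)}
    \leq \Vert \mathrm{div}_H f(s) \Vert_{L^\infty_H L^1_{x_3}(\Torus^3)}
    \leq C \Vert \nabla_H f(s) \Vert_{L^\infty_H L^p_{x_3}(\Torus^3)},
\]
and likewise $\Vert \nabla_H \phi(s) \Vert_{L^\infty(\Torus^3)} \leq C \Vert \nabla_H^2 f(s) \Vert_{L^\infty_H L^p_{x_3}(\Torus^3)}$; no vertical derivative of $f$ occurs, which is why hypothesis~(\ref{eq_regularity_assumption_for_vj}) is purely horizontal. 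From the pointwise bound $|G(s)(x^\prime, x_3)| \leq \Vert \phi(s)(x^\prime, \cdot) \Vert_{L^\infty(\Torus)} \Vert g(s)(x^\prime, \cdot) \Vert_{L^1(\Torus)}$, which is uniform in $x_3$, one obtains
\[
    \Vert G(s) \Vert_{L^\infty_H L^p_{x_3}} \leq C \Vert \nabla_H f(s) \Vert_{L^\infty_H L^p_{x_3}} \Vert g(s) \Vert_{L^\infty_H L^p_{x_3}},
\]
and, differentiating $G$ in the horizontal variables,
\[
    \Vert \nabla_H G(s) \Vert_{L^\infty_H L^p_{x_3}}
    \leq C \left( \Vert \nabla_H^2 f(s) \Vert_{L^\infty_H L^p_{x_3}} \Vert g(s) \Vert_{L^\infty_H L^p_{x_3}} + \Vert \nabla_H f(s) \Vert_{L^\infty_H L^p_{x_3}} \Vert \nabla_H g(s) \Vert_{L^\infty_H L^p_{x_3}} \right).
\]
Under the hypotheses on $f, g$ together with~(\ref{eq_regularity_assumption_for_vj}) for $\alpha = 0$ and $\alpha = 1$, this gives $\Vert G(s) \Vert_{L^\infty_H L^p_{x_3}} \leq C$ and $\Vert \nabla_H G(s) \Vert_{L^\infty_H L^p_{x_3}} \leq C s^{-1/2}$ on $(0, T)$.

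For $\alpha = 0$, Proposition~\ref{prop_estimate_composit_operator} (the case $j = 1, 2$, $p = q$) bounds the time integral in~(\ref{eq_estimte_for_tilde_F_20}) by $C \int_0^t (t - s)^{-1/2} \Vert G(s) \Vert_{L^\infty_H L^p_{x_3}} \, ds \leq C \, t^{1/2} \leq C$, which is the claim. For $\alpha = 1$ the naive derivative count produces the non-integrable kernel $(t - s)^{-1}$, and I would remove it exactly as in Proposition~\ref{prop_estimate_for_tilde_F_10}: write $\mathrm{div}_H G = (-\Delta_H)^{1/3} h$ with $h := (-\Delta_H)^{-1/3} \mathrm{div}_H G$ (well defined since $\mathrm{div}_H G$ has vanishing horizontal mean), so that the interpolation inequality~(\ref{eq_interpolation_ineq_from_Giga_etal1}) gives $\Vert h(s) \Vert_{L^\infty_H L^p_{x_3}} \leq C \Vert G(s) \Vert_{L^\infty_H L^p_{x_3}}^{2/3} \Vert \nabla_H G(s) \Vert_{L^\infty_H L^p_{x_3}}^{1/3} \leq C s^{-1/6}$, while the kernel analysis underlying Proposition~\ref{prop_estimate_composit_operator} --- decomposing $\sigma(\mathbb{P}_\varepsilon)$ by~(\ref{eq_symbol_anisotropic_Helmholtz_projection_2}), using $|\xi_\varepsilon|^{-2} = \int_0^\infty e^{-s^\prime |\xi_\varepsilon|^2} \, ds^\prime$, and Propositions~\ref{prop_estimates_for_heat_semigroup} and~\ref{prop_estimate_fational_function_epsilon_independent} --- yields $\Vert \nabla e^{\tau \Delta} \mathbb{P}_\varepsilon (0, 0, (-\Delta_H)^{1/3} h)^T \Vert_{L^\infty_H L^p_{x_3}} \leq C \tau^{-5/6} \Vert h \Vert_{L^\infty_H L^p_{x_3}}$ uniformly in $0 < \varepsilon \leq 1$. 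Combining these,
\[
    \left\Vert \nabla \int_0^t e^{(t-s)\Delta} \mathbb{P}_\varepsilon (0,0,\mathrm{div}_H G(s))^T \, ds \right\Vert_{L^\infty_H L^p_{x_3}}
    \leq C \int_0^t (t - s)^{-5/6} s^{-1/6} \, ds = C,
\]
and multiplying by $t^{1/2}$ gives~(\ref{eq_estimte_for_tilde_F_20}) for $\alpha = 1$.

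The step I expect to be the main obstacle is the $\varepsilon$-uniform bound for $\nabla e^{\tau \Delta} \mathbb{P}_\varepsilon (0, 0, (-\Delta_H)^{1/3} \, \cdot \,)^T$. Working component by component through~(\ref{eq_symbol_anisotropic_Helmholtz_projection_2}), one must check that each factor $\xi_3 / \varepsilon$ produced by $\mathbb{P}_\varepsilon$ is compensated --- either by a factor $|\xi_\varepsilon|^{-1} \leq \varepsilon |\xi_3|^{-1}$ already present, or by the power of $\varepsilon$ supplied by Proposition~\ref{prop_estimate_fational_function_epsilon_independent} --- so that no negative power of $\varepsilon$ survives, and that the resulting exponent $-5/6$ (which is $> -1$) makes the auxiliary $s^\prime$-integral converge. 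This is the same mechanism as in the proof of Proposition~\ref{prop_estimate_composit_operator}; the remaining computations are routine Beta-function integrals.
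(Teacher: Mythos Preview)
Your proof is correct and follows essentially the same route as the paper: rewrite the integrand as $\mathrm{div}_H$ applied to the $(0,0,G)^T$ vector, apply Proposition~\ref{prop_estimate_composit_operator} for $\alpha=0$, and for $\alpha=1$ combine Proposition~\ref{prop_estimate_composit_operator} with the horizontal interpolation inequality~(\ref{eq_interpolation_ineq_from_Giga_etal1}) (your auxiliary function $h$ makes explicit what the paper compresses into the single kernel exponent $(t-s)^{-5/6}$). The only cosmetic difference is that the paper works directly with $\phi(s)g(s)$ and obtains the sharper decay $Ct^{-1/2}$ before multiplying by $t^{1/2}$, whereas you bound $G$ itself and land on a constant; both suffice for~(\ref{eq_estimte_for_tilde_F_20}).
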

    \begin{proof}
        For $\alpha=0$ we see from Propositions \ref{prop_estimate_composit_operator}   that
        \begin{align*}
            &\sup_{0<t<T} \left \Vert
                    \int_0^t
                        e^{(t -s) \Delta} \mathbb{P}_\varepsilon \int_{- \pi}^{x_3}
                            \left(
                                \begin{array}{c}
                                    0\\
                                    0\\
                                    \mathrm{div}_H \left(
                                        \int_{- \pi}^z
                                            \mathrm{div}_H f (s)
                                        d \zeta g (s)
                                    \right)
                                \end{array}
                            \right)
                        dz
                    ds
                \right \Vert_{L^\infty_H L^p_{x_3}  (\Torus^3)} \\
                & \leq C \int_0^t
                    (t-s)^{-1/2} \Vert
                        \nabla_H f (s)
                    \Vert_{L^\infty_H L^p_{x_3}  (\Torus^3)}
                    \Vert
                        g (s)
                    \Vert_{L^\infty_H L^p_{x_3}  (\Torus^3)}
                ds \\
                & \leq C \sup_{0<t<T} t^{1/2} \Vert
                    \nabla_H f (t)
                \Vert_{L^\infty_H L^p_{x_3}  (\Torus^3)}
                \sup_{0<t<T} \Vert
                    g (t)
                \Vert_{L^\infty_H L^p_{x_3}  (\Torus^3)}.
        \end{align*}
        For $\alpha=1$ we see from Propositions \ref{prop_estimate_composit_operator} and (\ref{eq_interpolation_ineq_from_Giga_etal1}) that
        \begin{align*}
            &\sup_{0<t<T} \left \Vert
                \int_0^t
                    \nabla e^{(t -s) \Delta} \mathbb{P}_\varepsilon \int_{- \pi}^{x_3}
                        \left(
                            \begin{array}{c}
                                0\\
                                0\\
                                \mathrm{div}_H \left(
                                    \int_{- \pi}^z
                                        \mathrm{div}_H f (s)
                                    d \zeta g (s)
                                \right)
                            \end{array}
                        \right)
                    dz
                ds
            \right \Vert_{L^\infty_H L^p_{x_3} (\Torus^3)} \\
            & \leq C \int_0^t
                (t-s)^{-1/2- 1/3} \left \Vert
                    \int_{- \pi}^z
                        \mathrm{div}_H f (s)
                    d \zeta g (s)
                \right \Vert_{L^\infty_H L^p_{x_3} (\Torus^3)}^{2/3} \\
            & \quad \times \left \Vert
                    \nabla_H \left(
                        \int_{- \pi}^z
                            \mathrm{div}_H f (s)
                        d \zeta g (s)
                    \right)
                \right \Vert_{L^\infty_H L^p_{x_3} (\Torus^3)}^{1/3}
            ds \\
            & \leq C \int_0^t
            (t-s)^{-1/2- 1/3} \Vert
                    \nabla_H f (s)
                \Vert_{L^\infty_H L^p_{x_3}  (\Torus^3)}^{2/3}
                \Vert
                    g (s)
                \Vert_{L^\infty_H L^p_{x_3} (\Torus^3)}^{2/3} \\
            & \quad \times
                \left(
                    \Vert
                        \nabla_H^2 f (s)
                    \Vert_{L^\infty_H L^p_{x_3} (\Torus^3)}
                    \Vert
                        g (s)
                    \Vert_{L^\infty_H L^p_{x_3} (\Torus^3)}
                    + \Vert
                        \nabla_H f (s)
                    \Vert_{L^\infty_H L^p_{x_3} (\Torus^3)}
                    \Vert
                        \nabla_H g (s)
                    \Vert_{L^\infty_H L^p_{x_3} (\Torus^3)}
                \right)^{1/3}
            ds \\
            & \leq C t^{-1/2} \left(
                \sup_{0<t<T} t^{1/2} \Vert
                    \nabla_H f (t)
                \Vert_{L^\infty_H L^p_{x_3} (\Torus^3)}
            \right)^{2/3}
            \left(
                \sup_{0<t<T} t^{1/2} \Vert
                    g (t)
                \Vert_{L^\infty_H L^p_{x_3} (\Torus^3)}
            \right)^{2/3} \\
            & \quad \times \left(
                \sup_{0<t<T} t^{1/2} \Vert
                    \nabla_H^2 f (t)
                \Vert_{L^\infty_H L^p_{x_3} (\Torus^3)}
                \sup_{0<t<T} \Vert
                    g (t)
                \Vert_{L^\infty_H L^p_{x_3} (\Torus^3)}
            \right. \\
            & \quad \quad \quad \quad
            \left.
                + \sup_{0<t<T} t^{1/2} \Vert
                    \nabla_H f (t)
                \Vert_{L^\infty_H L^p_{x_3} (\Torus^3)}
                \sup_{0<t<T} \Vert
                    \nabla_H g (t)
                \Vert_{L^\infty_H L^p_{x_3} (\Torus^3)}
            \right)^{1/3}.
        \end{align*}
        Thus we have the conclusion.
    \end{proof}

    In the next section, we will use Propositions \ref{prop_estimate_for_tilde_F_10} and \ref{prop_estimates_for_tilde_F} to bound $\tilde{F}(v, w)$.

    \section{Estimates for the Solution to the Equation of Difference}
    By construction of the solution $u$ to the primitive equations in $C_t L^\infty_H L^p_{x_3} (\Torus^3 \times (0,T))$, see the proof of Theorem 2.1 of \cite{GigaGriesHieberHusseinKashiwabara2017_L_infty_L1}, the solution $u$ can be decomposed into
    \begin{align} \label{eq_decomposition_u}
        \begin{split}
            & u
            = u_{\text{smooth}}
            + u_{\text{small}} \\
            & u_{\text{smooth}}
            = (v_{\text{smooth}}, w_{\text{smooth}}), \quad
            u_{\text{small}}
            = (v_{\text{small}}, w_{\text{small}})
        \end{split}
    \end{align}
    at least in a short interval $(0,T_0)$, where
    \begin{align} \label{eq_estimates_decomposition_u}
        \begin{split}
            & \sup_{0<t<T_0} \Vert
                u_{\text{smooth}} (t)
            \Vert_{C^1 (\Torus^3)}
            \leq C, \\
            & \sup_{0<t<T_0} t^{\alpha/2} \Vert
                \nabla^\alpha u_{\text{small}} (t)
            \Vert_{L^\infty_H L^p_{x_3} (\Torus^3)}
            \leq \delta
        \end{split}
    \end{align}
    for $\alpha=0,1$, some constant $C>0$, and small $T_0, \delta>0$.
    Since $u$ is smooth for $t \geq T_0$, we can assume
    \begin{align} \label{eq_smoothness_u_after_T_0}
        \sup_{t \geq T_0} \Vert
            u (t)
        \Vert_{C^1 (\Torus^3)}
        \leq C
    \end{align}
    for some constant $C>0$.
    \begin{proof}[Proof of Theorem \ref{thm_main}]
        We put
        \begin{align*}
            & \widetilde{U}_\varepsilon
            = \left(
                \begin{array}{c}
                    V_\varepsilon \\
                    \varepsilon W_\varepsilon
                \end{array}
            \right), \quad
            \tilde{u}_\varepsilon
            = \left(
                \begin{array}{c}
                    v_\varepsilon \\
                    \varepsilon w_\varepsilon
                \end{array}
            \right) \\
            & \widetilde{u}_{\text{smooth}}
            = \left(
                \begin{array}{c}
                    v_{\text{smooth}} \\
                    \varepsilon w_{\text{smooth}}
                \end{array}
            \right), \quad
            \tilde{u}_{\text{small}}
            = \left(
                \begin{array}{c}
                    v_{\text{small}} \\
                    \varepsilon w_{\text{small}}
                \end{array}
            \right)
        \end{align*}
        and set
        \begin{align*}
            X_{\varepsilon, T} (\widetilde{U}_\varepsilon)
            & = \sup_{0<s<T} \Vert
                \widetilde{U}_\varepsilon (s)
            \Vert_{L^\infty_H L^p_{x_3} (\Torus^3)}
            + \sup_{0<s<T} s^{1/2} \Vert
                \nabla \widetilde{U}_\varepsilon (s)
            \Vert_{L^\infty_H L^p_{x_3} (\Torus^3)}, \\
            Y_T
            & = \sup_{0<s<T} \Vert
                u (s)
            \Vert_{L^\infty_H L^p_{x_3} (\Torus^3)}
            + \sup_{0<s<T} \Vert
                \nabla_H u (s)
            \Vert_{L^\infty_H L^p_{x_3} (\Torus^3)} \\
            & \quad + \sup_{0<s<T} s^{1/2} \Vert
                \nabla \nabla_H u (s)
            \Vert_{L^\infty_H L^p_{x_3} (\Torus^3)}.
        \end{align*}
        Note that these vector fields are $\mathrm{div}_\varepsilon$-free.
        We use the integral equations of the form
        \begin{align} \label{eq_integral_form_diff_eq_2}
            \begin{split}
                & \widetilde{U}_\varepsilon (t) \\
                & = \int_0^t
                    e^{(t-s) \Delta} \mathbb{P}_\varepsilon \mathrm{div}_\varepsilon    \left(
                        \widetilde{U}_\varepsilon (s) \otimes \widetilde{U} _\varepsilon (s)
                        + \widetilde{U}_\varepsilon (s) \otimes \tilde{u} (s)
                    \right)
                ds \\
                & = \int_0^t
                e^{(t-s) \Delta} \mathbb{P}_\varepsilon \left(
                    \widetilde{U}_\varepsilon (s) \cdot \nabla \tilde{u} (s)
                \right)
                ds \\
                & + \varepsilon \int_0^t
                e^{(t-s) \Delta} \mathbb{P}_\varepsilon \widetilde{F} (s)
                ds.
            \end{split}
        \end{align}
        This integral equations are equivalent to (\ref{eq_integral_form_diff_eq}).
        In view of (\ref{eq_decomposition_u}), the right-hand side is decomposed into
        \begin{align*}
            & \int_0^t
                e^{(t-s) \Delta} \mathbb{P}_\varepsilon \mathrm{div}_\varepsilon
                (
                    \widetilde{U}_\varepsilon (s) \otimes \widetilde{U}_\varepsilon (s)
                )
            ds\\
            & + \int_0^t
                e^{(t-s) \Delta} \mathbb{P}_\varepsilon \mathrm{div}_\varepsilon
                \left(
                    \widetilde{U}_\varepsilon (s) \otimes \tilde{u}_\text{smooth} (s)
                    + \widetilde{U}_\varepsilon (s)\otimes \tilde{u}_\text{small} (s)
                    + \tilde{u}_\text{small} (s) \otimes \widetilde{U}_\varepsilon (s)
                \right)
            ds \\
            & + \int_0^t
                e^{(t-s) \Delta} \mathbb{P}_\varepsilon \mathrm{div}_\varepsilon \left(
                    \tilde{u}_\text{smooth} (s) \otimes \widetilde{U}_\varepsilon (s)
                \right)
                ds \\
            & + \varepsilon \int_0^t
                e^{(t-s) \Delta} \mathbb{P}_\varepsilon \widetilde{F} (s)
            ds \\
            & =: N(\widetilde{U}_\varepsilon (t)).
        \end{align*}
        Propositions \ref{prop_estimate_for_tilde_F_10} and \ref{prop_estimates_for_tilde_F} imply
        \begin{align*}
            \left \Vert
                \varepsilon \int_0^t
                    \nabla^\alpha e^{(t-s) \Delta} \mathbb{P}_\varepsilon \widetilde{F} (s)
                ds
            \right \Vert_{L^\infty_H L^p_{x_3} (\Torus^3)}
            \leq C \varepsilon t^{\alpha/2} Y(t)^2
            \quad \text{for} \quad \alpha = 0,1.
        \end{align*}
        We see from Corollary \ref{cor_nonlinear_estimate_div_ep_free} and \ref{cor_estimates_nabla_nonlinear_terms} that the first and second terms of $N (\widetilde{U}_\varepsilon (t))$ is bounded by
        \begin{align*}
            & \left \Vert
                \int_0^t
                    \nabla^\alpha e^{(t-s) \Delta} \mathbb{P}_\varepsilon \mathrm{div}_\varepsilon \left(
                        \widetilde{U}_\varepsilon (s) \otimes \widetilde{U} _\varepsilon (s)
                        + \widetilde{U}_\varepsilon (s) \otimes \tilde{u}_\text{smooth} (s)
                    \right.
            \right. \\
            & \left. \quad \quad \quad \quad
                    \left.
                        + \widetilde{U}_\varepsilon (s) \otimes \tilde{u}_\text{small} (s)
                        + \tilde{u}_\text{small} (s) \otimes \widetilde{U}_\varepsilon (s)
                    \right)
                ds
            \right \Vert_{L^\infty_H L^p_{x_3} (\Torus^3)} \\
            & \leq C t^{\alpha/2} X_{\varepsilon, t} (\widetilde{U}_\varepsilon)^2
            + C t^{1/2 + \alpha/2} X_{\varepsilon, t} (\widetilde{U}_\varepsilon)
            + C t^{\alpha/2} \delta X_{\varepsilon, t} (\widetilde{U}_\varepsilon).
        \end{align*}
        Since
        \begin{align} \label{eq_docomposition_nonlinear_term_tmp}
            & \mathrm{div}_\varepsilon \left(
                \tilde{u}_\text{smooth} (s) \otimes \widetilde{U}_\varepsilon(s)
            \right) \notag \\
            & = \mathrm{div}_H \left(
                \tilde{u}_{smooth} \otimes \widetilde{V}_\varepsilon
            \right)
            - \partial_3 \left(
                \tilde{u}_{smooth} \int_{- \pi}^{x_3}
                    \mathrm{div}_H \widetilde{V}_\varepsilon
                dz
            \right),
        \end{align}
        we use interpolation inequalities (\ref{eq_interpolation_ineq_from_Giga_etal1}) and
        \begin{align*}
            \Vert
                \partial_3 \partial_3^{-s} f
            \Vert_{L^\infty_H L^p_{x_3} (\Torus^3)}
            \leq C \Vert
                f
            \Vert_{L^\infty_H L^p_{x_3} (\Torus^3)}^{\frac{s}{2}}
            \Vert
                \partial_3 f
            \Vert_{L^\infty_H L^p_{x_3} (\Torus^3)}^{\frac{1 - s}{2}},
            \quad s \in (0,1),
        \end{align*}
        which is a direct consequence of the one-dimensional interpolation inequality in $L^p (\Torus)$, and Proposition \ref{prop_estimate_composit_operator} to estimate the third term of $N(U_\varepsilon)$ as
        \begin{align*}
            & \left \Vert
                \int_0^t
                    e^{(t-s) \Delta} \mathbb{P}_\varepsilon \mathrm{div}_\varepsilon    \left(
                        \tilde{u}_\text{smooth} (s) \otimes \widetilde{U}_\varepsilon   (s)
                    \right)
                ds
            \right \Vert_{L^\infty_H L^p_{x_3} (\Torus^3)} \\
            & \leq C \int_0^t
                (t-s)^{-1/4} \left \Vert
                    \tilde{u}_\text{smooth} (s) \otimes \widetilde{V}_\varepsilon (s)
                \right \Vert^{1/2}
                \left \Vert
                    \nabla_H \left(
                        \tilde{u}_\text{smooth} (s) \otimes \widetilde{V}_\varepsilon   (s)
                    \right)
                \right \Vert_{L^\infty_H L^p_{x_3} (\Torus^3)}^{1/2}
            ds \\
            & + C \int_0^t
                (t-s)^{-1/4} \left \Vert
                    \tilde{u}_\text{smooth} (s) \int_{- \pi}^{x_3}
                        \mathrm{div}_H \widetilde{V}_\varepsilon (s)
                    dz
                \right \Vert_{L^\infty_H L^p_{x_3} (\Torus^3)}^{1/2} \\
            & \quad \quad \times
                \left \Vert
                    \partial_3 \left(
                        \tilde{u}_\text{smooth} (s) \int_{- \pi}^{x_3}
                            \mathrm{div}_H \widetilde{V}_\varepsilon (s)
                        dz
                    \right)
                \right \Vert_{L^\infty_H L^p_{x_3} (\Torus^3)}^{1/2}
            ds \\
            & \leq C t^{1/4} X_{\varepsilon, t} (\widetilde{U}_\varepsilon) \int_0^t
                (t-s)^{-1/4} s^{-1/2}
            ds \\
            & \leq C t^{1/4} X_{\varepsilon, t} (\widetilde{U}_\varepsilon),
        \end{align*}
        and similarly
        \begin{align*}
            & \left \Vert
                \nabla \int_0^t
                    e^{(t-s) \Delta} \mathbb{P}_\varepsilon \mathrm{div}_\varepsilon    \left(
                        \tilde{u}_\text{smooth} (s) \otimes \widetilde{U}_\varepsilon   (s)
                    \right)
                ds
            \right \Vert_{L^\infty_H L^p_{x_3} (\Torus^3)} \\
            & \leq C t^{1/4} X_{\varepsilon, t} (\widetilde{U}_\varepsilon) \int_0^t
                (t-s)^{-3/4} s^{-1/2}
            ds \\
            & \leq C t^{-1/4} X_{\varepsilon, t} (\widetilde{U}_\varepsilon),
        \end{align*}
        for some constant $C_3>0$.
        Summing up these estimates, we see that there exist a small $0<T_0<1$ and constants $C_0, C_1, C_2>0$ such that
        \begin{align}
            X_{\varepsilon, T_0} \left(
                N(\widetilde{U}_\varepsilon)
            \right)
            \leq C_2 X_{\varepsilon, T_0} (\widetilde{U}_\varepsilon)^2
            + C_1 (T_0^{1/4}+\delta) X_{\varepsilon, T_0} (\widetilde{U}_\varepsilon)
            + C_0 Y_{T_0}^2 \varepsilon.
        \end{align}
        Thus if we take $\varepsilon$ and $T_0$ so small that
        \begin{align*}
            C_1 (T_0^{1/4} + \delta)
            < 1, \quad
            \varepsilon
            < \frac{\left(
                    1 - (T_0^{1/4} + \delta)
                \right)^2
            }{
                4 C_0 C_2 Y_{T_0}^2
            },
        \end{align*}
        we obtain
        \begin{align*}
            X_{\varepsilon, T_0} \left(
                N(\widetilde{U}_\varepsilon)
            \right)
            \leq 2 \varepsilon C_0 Y_{T_0}^2,
        \end{align*}
        for $X_{\varepsilon, T_0} (\widetilde{U}_\varepsilon)
        \leq 2 \varepsilon C_0 Y_{T_0}^2$.
        We consider the difference
        \begin{align*}
            \widetilde{U}_\varepsilon^\prime (s)
            & := \widetilde{U}_{\varepsilon}^2 - \widetilde{U}_{\varepsilon}^1
        \end{align*}
        for $\widetilde{U}_{\varepsilon}^1, \widetilde{U}_{\varepsilon}^2$ satisfying $X_{\varepsilon, t} (\widetilde{U}_\varepsilon^j) < \infty$ for $j=1,2$.
        By the same way as above, we have
        \begin{align*}
            & X_{\varepsilon, T_0} \left(
                N(\widetilde{U}_\varepsilon^\prime)
            \right) \\
            & \leq C_2^\prime X_{\varepsilon, T_0} (\widetilde{U}_\varepsilon) \left(
                X_{\varepsilon, T_0} (\widetilde{U}_\varepsilon^1)
                + X_{\varepsilon, T_0} (\widetilde{U}_\varepsilon^2)
            \right)
            + C_1^\prime (T_0^{1/4}+\delta) X_{\varepsilon, T_0} (\widetilde{U}_\varepsilon).
        \end{align*}
        Thus, for sufficiently small $\varepsilon, T_0$, and $\delta$, we see $N$ is a contraction map.
        By the contraction mapping principle, we see that there exists a unique solution $\widetilde{U}_\varepsilon \in C_t C_H L^p_{x_3} (\Torus^3 \times [0, T_0])$ to (\ref{eq_integral_form_diff_eq_2}) such that
        \begin{align*}
            X_{\varepsilon, T_0} (\widetilde{U}_\varepsilon)
            \leq 2 \varepsilon C_0 Y_{T_0}^2.
        \end{align*}
        We next consider the integral equations with initial data $\widetilde{U}_\varepsilon (T_0)$ such as
        \begin{align} \label{eq_int_eq_after_T_0}
            \begin{split}
                &\widetilde{U}_\varepsilon (t) \\
                &= e^{t \Delta} \widetilde{U}_\varepsilon (T_0) \\
                &+ \int_0^t
                    e^{(t-s) \Delta} \mathbb{P}_\varepsilon \mathrm{div}_\varepsilon    \left(
                        \widetilde{U}_\varepsilon (s) \otimes \widetilde{U} _\varepsilon (s)
                        + \widetilde{U}_\varepsilon (s) \otimes \tilde{u} (s+T_0)
                    \right)
                ds \\
                &= \int_0^t
                e^{(t-s) \Delta} \mathbb{P}_\varepsilon \left(
                    \widetilde{U}_\varepsilon (s) \cdot \nabla \tilde{u} (s+T_0)
                \right)
                ds \\
                &+ \int_0^t
                e^{(t-s) \Delta} \mathbb{P}_\varepsilon \widetilde{F} (s+T_0)
                ds.
            \end{split}
        \end{align}
        Because of the bound (\ref{eq_smoothness_u_after_T_0}), things are much easier.
        We can use the same way as above estimates to see that there exist small $0<T_1<1$ and constant $C_3, C_4, C_5>0$ such that
        \begin{align} \label{eq_contraction}
            \begin{split}
                & X_{\varepsilon, T_1} \left(
                    N(\widetilde{U}_\varepsilon)
                \right)\\
                & \leq C_5 X_{\varepsilon, T_1} (\widetilde{U}_\varepsilon)^2
                + C_4 T_1^{1/4} X_{\varepsilon, T_1} (\widetilde{U}_\varepsilon)
                + C_3 \left(
                    \varepsilon Y_{T_0+T_1}^2
                    + \Vert
                        \widetilde{U}_\varepsilon (T_0)
                    \Vert_{L^\infty_H L^p_{x_3} (\Torus^3)}
                \right) \\
                & \leq C_5 X_{\varepsilon, T_1} (\widetilde{U}_\varepsilon)^2
                + C_4 T_1^{1/4} X_{\varepsilon, T_1} (\widetilde{U}_\varepsilon)
                + C_3 \varepsilon \left(
                    Y_{T_0+T_1}^2
                    + 2 C_0 Y_{T_0}^2
                \right).
            \end{split}
        \end{align}
        Thus if $\varepsilon$ so small that
        \begin{align*}
            C_4 T_1^{1/4}
            < 1, \quad
            \varepsilon
            < \frac{
                \left(
                    1 - T_1^{1/4}
                \right)^2
            }{
                2 C_3 \left(
                    Y_{T_0+T_1}^2
                    + 2 C_0 Y_{T_0}^2
                \right) C_5
            },
        \end{align*}
        we obtain
        \begin{align*}
            X_{\varepsilon, T_1} \left(
                N(\widetilde{U}_\varepsilon)
            \right)
            \leq 2 \varepsilon C_3 \left(
                Y_{T_0+T_1}^2
                + 2 C_0 Y_{T_0}^2
            \right)
        \end{align*}
        for $X_{\varepsilon, T_1} (\widetilde{U}_\varepsilon)
        \leq 2 \varepsilon C_0 Y_{T_1}$.
        By the same way as (\ref{eq_contraction}), we see that $N$ is a contraction mapping for small $T_1$ and $\varepsilon$.
        Note that the constant $T_1$ is independent of $\varepsilon$.
        We use the contraction mapping principle again to get a unique solution $\widetilde{U}_\varepsilon \in C^\infty_t C_H L^p_{x_3} (\Torus^3 \times [0, T_1])$ to (\ref{eq_int_eq_after_T_0}) such that
        \begin{align*}
            X_{\varepsilon, T_0} (\widetilde{U}_\varepsilon)
            \leq 2 \varepsilon C_3 \left(
                Y_{T_0+T_1}^2
                + 2 C_0 Y_{T_0}^2
            \right).
        \end{align*}
        Since $T_1$ is independent of $\varepsilon$, if we choose $\varepsilon$ sufficiently small beforehand, we can repeat the above procedures up to $T$.
        We proved Theorem \ref{thm_main}.
    \end{proof}

\begin{appendices}
\section{Derivation of the equation for $w$} \label{seq_derivation_PE}
    Here we derive the equation (\ref{eq_for_w}).
    Let
    \begin{align*}
        \overline{v} (x^\prime)
        := \frac{1}{2} \int_{- \pi}^\pi
            f (x^\prime, z)
        dz
        \quad \text{and} \quad
        \widetilde{v}
        := f - \overline{f}
    \end{align*}
    for any $x^\prime \in \Torus^2$ and integrable function $f$.
    It is clear that
    \begin{align} \label{eq_average_of_tilde_is_zero}
        \int_{- \pi}^1 \tilde{f}(\cdot, z) dz
        = 0.
    \end{align}
    We see that
    \begin{align*}
        \mathrm{div}_H g
        = \mathrm{div}_H \, \widetilde{g},
    \end{align*}
    and
    \begin{align*}
        w(\cdot, x_3)
        = \int_{- \pi}^{x_3}
            \mathrm{div}_H \, g
        dz
        = \int_{- \pi}^{x_3}
            \mathrm{div}_H \, \widetilde{g}
        dz,
    \end{align*}
    for any $- \pi \leq x_3 \leq \pi$ and integrable $\mathrm{div}_H$-free vector $g$.
    The first equation of (\ref{eq_primitive}) is equivalent to
    \begin{align} \label{eq_primitive_2}
        \partial_t v - \Delta v
        + \tilde{v} \cdot \nabla_H \tilde{v}
        + \overline{v} \cdot \nabla_H \tilde{v}
        + \tilde{v} \cdot \nabla_H \overline{v}
        + \overline{v} \cdot \nabla_H \overline{v}
        + w \partial_3 \tilde{v}
        + \nabla_H \pi
        = 0.
    \end{align}
    Applying $\frac{1}{2 \pi} \int_{- \pi}^\pi \, \cdot \, dz$ to the both sides of (\ref{eq_primitive_2}), we have
    \begin{align} \label{eq_averaged_eq_for_PE}
        \partial_t \overline{v} - \Delta \overline{v} + \frac{1}{2 \pi} \int_{- \pi}^\pi
            \tilde{v} \cdot \nabla_H \tilde{v}
        dz
        + \overline{v} \cdot \nabla_H \overline{v}
        + \frac{1}{2 \pi} \int_{-\pi}^\pi
            w \partial_z \tilde{v}
        dz
        + \nabla_H \pi
        = 0.
    \end{align}
    Note that the boundary traces from $- \Delta v$ vanish since $v$ is a even vector field with respect to $x_3$.
    Taking the difference between (\ref{eq_primitive_2}) and (\ref{eq_averaged_eq_for_PE}), we have a nonlinear parabolic equation
    \begin{align}
        & \partial_t \tilde{v} - \Delta \tilde{v}
        + \tilde{v} \cdot \nabla_H \tilde{v}
        + \overline{v} \cdot \nabla_H \tilde{v}
        + \tilde{v} \cdot \nabla_H \overline{v}
        -  \frac{1}{2\pi} \int_{- \pi}^\pi
            \tilde{v} \cdot \nabla_H \tilde{v}
        dz \notag \\
        & \quad \quad \quad \quad \quad + w \partial_3 \tilde{v}
        - \frac{1}{2\pi} \int_{- \pi}^\pi
            w \partial_3 \tilde{v}
        dz
        = 0.
    \end{align}
    Integration by parts and the formula $\partial_z w = - \mathrm{div}_H \tilde{v}$ lead to
    \begin{align}
        \begin{split}
        & \partial_t \tilde{v} - \Delta \tilde{v}
            + \tilde{v} \cdot \nabla_H \tilde{v}
            + \overline{v} \cdot \nabla_H \tilde{v}
            + \tilde{v} \cdot \nabla_H \overline{v}
            -  \frac{1}{2\pi} \int_{- \pi}^\pi
                \tilde{v} \cdot \nabla_H \tilde{v}
            dz \\
            & \quad \quad \quad \quad \quad + w \partial_3 \tilde{v}
            - \frac{1}{2 \pi} \int_{- \pi}^\pi
                (\mathrm{div}_H \tilde{v}) \tilde{v}
            dz
            = 0.
        \end{split}
    \end{align}
    Applying $\int_{- \pi}^{x_3} \mathrm{div_H} \, \cdot \, d \zeta$ to the both sides, we have
    \begin{align} \label{eq_for_w_pre}
        \begin{split}
            & \partial_t w - \Delta w + \int_{- \pi}^{x_3}
                \mathrm{div}_H \left(
                    \tilde{v} \cdot \nabla_H \tilde{v}
                    + \overline{v} \cdot \nabla_H \tilde{v}
                    + \tilde{v} \cdot \nabla_H \overline{v}
                \right)
            dz \\
            & - \frac{1}{2 \pi} (x_3 - 1) \int_{- \pi}^\pi
                \mathrm{div}_H \left(
                    \tilde{v} \cdot \nabla_H \tilde{v}
                \right)
            dz \\
            & + \int_{- \pi}^{x_3}
                \mathrm{div}_H \left(
                    w \partial_z \tilde{v}
                \right)
            dz
            - \frac{1}{2 \pi} (x_3 - \pi) \int_{- \pi}^\pi
                \mathrm{div}_H \left[
                    (\mathrm{div}_H \tilde{v}) \tilde{v}
                \right]
            dz
            = 0.
        \end{split}
    \end{align}
    Integration by parts yields
    \begin{align*}
        \int_{- \pi}^{x_3}
            w \partial_z \tilde{v}
        dz
        & = w \tilde{v}
        + \int_{- \pi}^{x_3}
            (\mathrm{div}_H \tilde{v}) \tilde{v}
        dz \\
        & = - \int_{- \pi}^{x_3}
            \mathrm{div}_H \tilde{v}
        dz \, \tilde{v}
        + \int_{- \pi}^{x_3}
            (\mathrm{div}_H \tilde{v}) \tilde{v}
        dz.
    \end{align*}
    Thus (\ref{eq_for_w_pre}) is equivalent to
    \begin{align*}
        & \partial_t w - \Delta w + \int_{- \pi}^{x_3}
            \mathrm{div}_H \left(
                \tilde{v} \cdot \nabla_H \tilde{v}
                + \overline{v} \cdot \nabla_H \tilde{v}
                + \tilde{v} \cdot \nabla_H \overline{v}
            \right)
        dz
        - \frac{1}{2} (x_3 - \pi) \int_{- \pi}^1
            \mathrm{div}_H \left(
                \tilde{v} \cdot \nabla_H \tilde{v}
            \right)
        dz\\
        & + \int_{- \pi}^{x_3}
                \mathrm{div}_H \left(
                - \int_{- \pi}^{z}
                    \mathrm{div}_H \, \tilde{v}
                d \zeta \, \tilde{v}
            \right)
        dz
        + \int_{- \pi}^{x_3}
            \mathrm{div}_H \left[
                (\mathrm{div}_H \, \tilde{v}) \tilde{v}
            \right]
        dz \\
        & - \frac{1}{2} (x_3 - \pi) \int_{- \pi}^1
            \mathrm{div}_H \left[
                (\mathrm{div}_H \tilde{v}) \tilde{v}
            \right]
        dz
        = 0,
    \end{align*}
    which is (\ref{eq_for_w}).

    \section{Decomposition of the solution around initial time}
    In this appendix we briefly show that the solution to (\ref{eq_primitive}) can be decomposed such that (\ref{eq_decomposition_u}) and (\ref{eq_estimates_decomposition_u}).
    The proof is quite similar to the proof Theorem \ref{thm_main}, we do not repeat the things for simplicity.
    We decomposed the initial data $v_0$ satisfying the assumptions of Theorem \ref{thm_main} such that
    \begin{align*}
        v_0
        = v_{0, smooth}
        + v_{0, small}
    \end{align*}
    satisfying
    \begin{align} \label{eq_decomposition_initial_data_v_0}
        \begin{split}
            \Vert
                v_{0, smooth}
            \Vert_{C^2(\Torus^3)}
            \leq C, \quad
            \Vert
                \nabla_H^\alpha v_{0, small}
            \Vert_{L^\infty_H L^1_{x_3} (\Torus^3) }
            \leq \delta^\ast,
        \end{split}
    \end{align}
    for some constant $C>0$, $\alpha=0,1$ and small $\delta> 0$.

    We know that there exist $T > 0$ and a unique solution $u^\ast = (v^\ast, w^\ast) \in C_t C^1_x (\Torus^3 \times (0, T)) \times C_t C_x (\Torus^3 \times (0, T))$ to (\ref{eq_primitive}) satisfying
    \begin{align} \label{eq_bound_for_u_ast}
        \Vert
            v^\ast
        \Vert_{C_t C^1(\Torus^3 \times [0, T])}
        \leq C^\ast
    \end{align}
    for some constant $C^\ast>0$ and small $T>0$.
    The reader refers to \cite{GigaGriesHieberHusseinKashiwabara2017_analiticity}.
    Let $P$ be the hydrostatic Helmholtz projection on $\Torus^3$.
    \begin{proposition} \label{prop_estimate_hydrostatic_composite_operator}
        Let $t>0$, $0 \leq \beta < 1$, and $p \geq 1$.
        Then there exits a constant $C>0$ such that
        \begin{align} \label{eq_estimates_hydrostatic_composite_operator}
            \begin{split}
                & \Vert
                    e^{t \Delta} P \mathrm{div} \left(
                        f \otimes g
                    \right)
                \Vert_{L^\infty_H L^p_{x_3}(\Torus^3)} \\
                & \leq C t^{- \frac{1 - \beta}{2}} \left(
                    \Vert
                        \nabla_H g
                    \Vert_{L^\infty(\Torus^3)}
                    \Vert
                        f
                    \Vert_{L^\infty_H L^p_{x_3}(\Torus^3)}
                    + \Vert
                        g_H
                    \Vert_{L^\infty(\Torus^3)}
                    \Vert
                        \nabla f
                    \Vert_{L^\infty_H L^p_{x_3}(\Torus^3)}
                \right)^\beta \\
                & \times \left(
                    \Vert
                        g_H
                    \Vert_{L^\infty(\Torus^3)}
                    \Vert
                        f
                    \Vert_{L^\infty_H L^p_{x_3}(\Torus^3)}
                \right)^{1 - \beta} \\
                & + C \min \left(
                        t^{- \frac{1 - \beta}{2}}
                        \Vert
                            \nabla_H g_H
                        \Vert_{L^\infty(\Torus^3)}^\beta \left(
                            \Vert
                                f
                            \Vert_{L^\infty_H L^p_{x_3}(\Torus^3)}
                            + \Vert
                                \partial_3 f
                            \Vert_{L^\infty_H L^p_{x_3}(\Torus^3)}
                        \right)^\beta
                \right. \\
                & \left.
                        \quad \quad \quad\quad \quad \times \left(
                        \Vert
                            \nabla_H g_H
                        \Vert_{L^\infty(\Torus^3)}
                        \Vert
                            f
                        \Vert_{L^\infty_H L^p_{x_3}(\Torus^3)}
                    \right)^{1 - \beta},
                    \Vert
                        \nabla_H g_H
                    \Vert_{L^\infty_H L^p_{x_3}(\Torus^3)}
                    \Vert
                        f
                    \Vert_{L^\infty(\Torus^3)}
                \right)
            \end{split}
        \end{align}
        for all two-dimensional vector fields $f \in C^1(\Torus^3)$ and divergence-free $g \in L^\infty_H L^p_{x_3}(\Torus^3)$ satisfying $\nabla u \in L^\infty_H L^p_{x_3}(\Torus^3)$.
    \end{proposition}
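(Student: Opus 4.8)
The plan is to reduce \eqref{eq_estimates_hydrostatic_composite_operator} to the heat--kernel bounds of Proposition~\ref{prop_estimates_for_heat_semigroup} and the interpolation inequality \eqref{eq_interpolation_ineq_from_Giga_etal1}, after unfolding the hydrostatic projection. Recall that $P$ acts on the horizontal part by $Ph = h - \nabla_H(-\Delta_H)^{-1}\mathrm{div}_H\,\overline{h}$, the horizontal zero mode being treated separately and contributing nothing to a divergence term. Writing $\mathrm{div}(f\otimes g)$ for the two--dimensional vector with $i$--th component $\mathrm{div}_H(f_i g_H) + \partial_3(f_i g_3)$, the divergence--free condition on $g$ together with $g_3 = -\int_{-\pi}^{x_3}\mathrm{div}_H g_H\,dz$ gives
\begin{align*}
 \mathrm{div}(f\otimes g) &= \mathrm{div}_H(f\otimes g_H) + \partial_3(f\,g_3), \\
 \overline{\mathrm{div}(f\otimes g)} &= \mathrm{div}_H\,\overline{f\otimes g_H},
\end{align*}
the second identity after an integration by parts in $x_3$ using $g_3(\cdot,\pm\pi)=0$. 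Hence
\begin{align*}
 e^{t\Delta}P\,\mathrm{div}(f\otimes g)
 &= e^{t\Delta}\mathrm{div}_H(f\otimes g_H) + e^{t\Delta}\partial_3(f\,g_3) \\
 &\quad - e^{t\Delta_H}\nabla_H(-\Delta_H)^{-1}\mathrm{div}_H\mathrm{div}_H\,\overline{f\otimes g_H}
 =: A + B - C.
\end{align*}

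For $A$ I would interpolate the order--one operator $e^{t\Delta}\mathrm{div}_H$ between its two endpoints: factoring $e^{t\Delta}\mathrm{div}_H = \bigl[e^{t\Delta}(-\Delta_H)^{(1-\beta)/2}\bigr]\bigl[(-\Delta_H)^{-(1-\beta)/2}\mathrm{div}_H\bigr]$, the first factor is bounded on $L^\infty_H L^p_{x_3}(\Torus^3)$ by $Ct^{-(1-\beta)/2}$ (Proposition~\ref{prop_estimates_for_heat_semigroup}; the borderline case $\beta=0$ is just \eqref{eq_L_q_L_p_estimate_heat_kernel_Torus_2}), and the second, applied to $f\otimes g_H$, is estimated by \eqref{eq_interpolation_ineq_from_Giga_etal1} with $s=1-\beta$, producing $\Vert f\otimes g_H\Vert^{1-\beta}\Vert\nabla_H(f\otimes g_H)\Vert^{\beta}$; then $\Vert f\otimes g_H\Vert_{L^\infty_H L^p_{x_3}}\le\Vert g_H\Vert_{L^\infty}\Vert f\Vert_{L^\infty_H L^p_{x_3}}$ and $\Vert\nabla_H(f\otimes g_H)\Vert_{L^\infty_H L^p_{x_3}}\le\Vert g_H\Vert_{L^\infty}\Vert\nabla f\Vert_{L^\infty_H L^p_{x_3}}+\Vert\nabla_H g\Vert_{L^\infty}\Vert f\Vert_{L^\infty_H L^p_{x_3}}$ give the first group of terms in \eqref{eq_estimates_hydrostatic_composite_operator}. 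The term $C$ is treated in the same spirit: since $\overline{f\otimes g_H}$ is independent of $x_3$ one works in $L^\infty(\Torus^2)$, and $C$ is a heat--regularized two--dimensional Riesz--type operator composed with $(-\Delta_H)^{\beta/2}\overline{f\otimes g_H}$; the former has, by Poisson summation (cf.\ \eqref{eq_bound_kernel_M_1}), an $L^1(\Torus^2)$ kernel of size $Ct^{-(1-\beta)/2}$ --- here $\beta<1$ is exactly what makes that kernel integrable --- and the latter is handled by a fractional interpolation inequality, so $C$ satisfies the same bound as $A$.

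For $B$ I would first write $f\,g_3 = -f\int_{-\pi}^{x_3}\mathrm{div}_H g_H\,dz$, so that the vertical antiderivative is absorbed by $\nabla_H g_H$. The $\min$ in the second group of terms comes from the two ways of distributing the available norms: one may split the $\partial_3$ between the heat kernel (cost $t^{-(1-\beta)/2}$) and the product $\partial_3^\beta(f g_3)$, the latter estimated by the one--dimensional interpolation $\Vert\partial_3^\beta h\Vert\le C\Vert h\Vert^{1-\beta}\Vert\partial_3 h\Vert^{\beta}$ used in the proof of Theorem~\ref{thm_main}, keeping $g_H$ in $L^\infty$ and $f$ in $L^\infty_H L^p_{x_3}$ --- this gives the first entry --- or one may leave the full $\partial_3$ on the product, estimate $e^{t\Delta}$ alone on $L^\infty_H L^p_{x_3}$ (no time weight), and place $f$ in $L^\infty$ and $\nabla_H g_H$ in $L^\infty_H L^p_{x_3}$, which gives the second entry. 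Collecting the bounds for $A$, $B$, $C$ proves \eqref{eq_estimates_hydrostatic_composite_operator}.

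The main obstacle is that $P$ --- equivalently, the two--dimensional Riesz transforms --- is unbounded on $L^\infty$, so one cannot estimate $e^{t\Delta}P$ directly on $L^\infty_H L^p_{x_3}(\Torus^3)$. The point is that in the bilinear term the bad part of $P$ only ever acts on the vertical mean $\overline{f\otimes g_H}$, after which the restriction $\beta<1$ lets one trade a full horizontal derivative for a fractional one of order $1-\beta>0$ whose heat regularization has an integrable kernel. Beyond that, the only delicate step is the bookkeeping: tracking precisely which norm --- $L^\infty(\Torus^3)$ or $L^\infty_H L^p_{x_3}(\Torus^3)$ --- each factor of $f$ and $g$ carries through the interpolation, so that the exponents $\beta$, $1-\beta$ and the $\min$ emerge exactly as stated.
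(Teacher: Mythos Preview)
Your proposal is correct and follows essentially the same route as the paper: the decomposition $P\mathrm{div}(f\otimes g)=P\mathrm{div}_H(f\otimes g_H)+\partial_3(fg_3)$, horizontal (resp.\ vertical) interpolation on the first (resp.\ second) piece, and the two alternative bounds on $\partial_3(fg_3)$ --- one via $\partial_3^\beta$-interpolation, one by expanding $\partial_3(fg_3)=(\mathrm{div}_H g_H)f+g_3\partial_3 f$ --- are exactly what the paper does. The only difference is cosmetic: the paper keeps $e^{t\Delta}P\mathrm{div}_H$ as a single composite operator, citing Lemma~6.1 of \cite{GigaGriesHieberHusseinKashiwabara2017_L_infty_L1} for its mapping properties, whereas you explicitly split $P$ into the identity and the mean-correction term $C$ and bound the latter separately via the integrability of the kernel of $e^{t\Delta_H}R_iR_j(-\Delta_H)^{(1-\beta)/2}$ for $\beta<1$; your version is more self-contained but otherwise equivalent.
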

    \begin{proof}
        The proof is essentially same as Lemma 6.1 of \cite{GigaGriesHieberHusseinKashiwabara2017_L_infty_L1}.
        We know from the Lemma that
        \begin{align} \label{eq_P_div_tilde_v_u}
            P \mathrm{div} (f \otimes u)
            = P \mathrm{div}_H (g_H \otimes f)
            + \partial_3 (g_3 f).
        \end{align}
        We apply the interpolation inequality to find
        \begin{align*}
            & \Vert
                e^{t \Delta} P \mathrm{div}_H \left(
                    g_H \otimes f
                \right)
            \Vert_{L^\infty_H L^p_{x_3}(\Torus^3)} \\
            & \leq C t^{- \frac{1 - \beta}{2}}
                \Vert
                    \nabla_H (g_H \otimes f)
                \Vert_{L^\infty(\Torus^3)}^\beta
                \Vert
                    g_H \otimes f
                \Vert_{L^\infty_H L^p_{x_3}(\Torus^3)}^{1 - \beta} \\
            & \leq C t^{- \frac{1 - \beta}{2}} \left(
                \Vert
                    \nabla_H g_H
                \Vert_{L^\infty(\Torus^3)}
                \Vert
                    f
                \Vert_{L^\infty_H L^p_{x_3}(\Torus^3)}
                + \Vert
                    g_H
                \Vert_{L^\infty(\Torus^3)}
                + \Vert
                    \nabla_H f
                \Vert_{L^\infty_H L^p_{x_3}(\Torus^3)}
            \right)^\beta \\
            & \times \left(
                \Vert
                    g_H
                \Vert_{L^\infty(\Torus^3)}
                \Vert
                    f
                \Vert_{L^\infty_H L^p_{x_3}(\Torus^3)}
            \right)^{1 - \beta}.
        \end{align*}
        The seconde term in (\ref{eq_P_div_tilde_v_u}) is bounded as
        \begin{align*}
            & \Vert
                e^{t \Delta}  \partial_3 \left(
                    g_3 f
                \right)
            \Vert_{L^\infty_H L^p_{x_3}(\Torus^3)} \\
            & \leq C t^{- \frac{1 - \beta}{2}}
                \Vert
                    \partial_3 (g_3 f)
                \Vert_{L^\infty(\Torus^3)}^\beta
                \Vert
                    g_3 f
                \Vert_{L^\infty_H L^p_{x_3}(\Torus^3)}^{1 - \beta} \\
            & \leq C t^{- \frac{1 - \beta}{2}} \Vert
                \nabla_H g_H
            \Vert_{L^\infty(\Torus^3)}^\beta \left(
                \Vert
                    f
                \Vert_{L^\infty_H L^p_{x_3}(\Torus^3)}
                + \Vert
                    \partial_3 f
                \Vert_{L^\infty_H L^p_{x_3}(\Torus^3)}
            \right)^\beta \\
            & \times \left(
                \Vert
                    \nabla_H g_H
                \Vert_{L^\infty(\Torus^3)}
                \Vert
                    f
                \Vert_{L^\infty_H L^p_{x_3}(\Torus^3)}
            \right)^{1 - \beta}
        \end{align*}
        and
        \begin{align*}
            & \Vert
                e^{t \Delta}  \partial_3 \left(
                    g_3 f
                \right)
            \Vert_{L^\infty_H L^p_{x_3}(\Torus^3)} \\
            & = \left \Vert
                e^{t \Delta} \left(
                    (\mathrm{div}_H g_H) f
                    + \int_{- \pi}^{x_3}
                        \mathrm{div}_H g_H
                    dz \partial_3 f
                \right)
            \right \Vert_{L^\infty_H L^p_{x_3}(\Torus^3)} \\
            & \leq C \Vert
                \nabla_H g_H
            \Vert_{L^\infty_H L^p_{x_3}(\Torus^3)}
            \Vert
                f
            \Vert_{L^\infty(\Torus^3)}.
        \end{align*}
        Thus we have (\ref{eq_estimates_hydrostatic_composite_operator}).
        Note that the Proposition \ref{prop_estimate_hydrostatic_composite_operator} also holds if we change the role of $g_H$ and $\tilde{f}$.
    \end{proof}

    We now show the decomposition (\ref{eq_decomposition_u}).
    We only consider the case $\alpha = 0$ in (\ref{eq_decomposition_u}) for simplicity.
    Since $v_0$ and also $v_{0, small}$ has more regularity for the horizontal direction, it not difficult to improve the regularity to the case $\alpha = 1$.
    Put
    \begin{align*}
        N (v^\ast, v)
        := e^{t \Delta} v_{0, small}
        - \int_0^t
            e^{(t -s) \Delta} P \mathrm{div} \left(
                u (s) \otimes v (s)
                + u^\ast (s) \otimes v (s)
                + u (s) \otimes v^\ast (s)
            \right)
        ds,
    \end{align*}
    where $u = (v, w)$ and $w$ is give by (\ref{eq_formula_for_w}).
    To show the decomposition, it is enough to show that there exists a solution to the equation $v = N (v^\ast, v)$ satisfying the second estimate of (\ref{eq_estimates_decomposition_u}).

    We apply Proposition 6.2 in \cite{GigaGriesHieberHusseinKashiwabara2017_L_infty_L1}, see also the proof of Theorem 2.1, to get
    \begin{align*}
        & \left \Vert
            \int_0^t
                e^{(t -s) \Delta} P \mathrm{div} \left(
                    u (s) \otimes v (s)
                \right)
            ds
        \right \Vert_{L^\infty_H L^p_{x_3} (\Torus^3)} \\
        & \leq C \sup_{0 < s < t} \Vert
            v (s)
        \Vert_{L^\infty_H L^p_{x_3} (\Torus^3)}
        \sup_{0 < s < t} s^{\frac{1}{2}} \Vert
            \nabla v (s)
        \Vert_{L^\infty_H L^p_{x_3} (\Torus^3)},
    \end{align*}
    and
    \begin{align*}
        & \left \Vert
            \int_0^t
                \nabla e^{(t -s) \Delta} P \mathrm{div} \left(
                    u (s) \otimes v (s)
                \right)
            ds
        \right \Vert_{L^\infty_H L^p_{x_3} (\Torus^3)} \\
        & \leq C t^{- \frac{1}{2}} \sup_{0 < s < t} s^{\frac{1}{2}} \left(
            \Vert
                v (s)
            \Vert_{L^\infty_H L^p_{x_3} (\Torus^3)}
            + \Vert
                \nabla v (s)
            \Vert_{L^\infty_H L^p_{x_3} (\Torus^3)}
        \right)^{\frac{3}{2}}
        \left(
            \sup_{0 < s < t} s^{\frac{1}{2}} \Vert
                \nabla v (s)
            \Vert_{L^\infty_H L^p_{x_3} (\Torus^3)}
        \right)^{\frac{1}{2}}.
    \end{align*}
    Proposition \ref{prop_estimate_hydrostatic_composite_operator} and (\ref{eq_bound_for_u_ast}) imply
    \begin{align*}
        & \left \Vert
            \int_0^t
                e^{(t -s) \Delta} P \mathrm{div} \left(
                    u^\ast (s) \otimes v (s)
                    + u (s) \otimes v^\ast (s)
                \right)
            ds
        \right \Vert_{L^\infty_H L^p_{x_3} (\Torus^3)} \\
        & \leq C t^{\frac{1}{2}} C^\ast \sup_{0 < s < t} \left(
            \Vert
                v (s)
            \Vert_{L^\infty_H L^p_{x_3} (\Torus^3)}
            + \sup_{0 < s < t} s^{\frac{1}{2}} \Vert
                \nabla v (s)
            \Vert_{L^\infty_H L^p_{x_3} (\Torus^3)}
        \right),
    \end{align*}
    and
    \begin{align*}
        & \left \Vert
            \int_0^t
                \nabla e^{(t -s) \Delta} P \mathrm{div} \left(
                    u^\ast (s) \otimes v (s)
                    + u (s) \otimes v^\ast (s)
            \right)
            ds
        \right \Vert_{L^\infty_H L^p_{x_3} (\Torus^3)} \\
        & \leq C C^\ast \left(
            t^{\frac{1}{4}} \sup_{0 < s < t} \Vert
                v (s)
            \Vert_{L^\infty_H L^p_{x_3} (\Torus^3)}
            + t^{- \frac{1}{4}} \sup_{0 < s < t} s^{\frac{1}{2}} \Vert
                \nabla v (s)
            \Vert_{L^\infty_H L^p_{x_3} (\Torus^3)}
        \right)^{\frac{1}{2}} \\
        & \quad \quad \quad \times \left(
            \sup_{0 < s < t} \Vert
                v (s)
            \Vert_{L^\infty_H L^p_{x_3} (\Torus^3)}
        \right)^{\frac{1}{2}} \\
        & + C C^\ast \sup_{0 < s < t} s^{\frac{1}{2}} \Vert
            \nabla v (s)
        \Vert_{L^\infty_H L^p_{x_3} (\Torus^3)},
    \end{align*}
    where we took $\beta = 0, 1/2$ for the first and second estimates, respectively.
    If we set
    \begin{align*}
        X_T (v)
        = \sup_{0 < s < T} \Vert
            v (s)
        \Vert_{L^\infty_H L^p_{x_3} (\Torus^3)}
        + \sup_{0 < s < T} s^{\frac{1}{2}} \Vert
            \nabla v (s)
        \Vert_{L^\infty_H L^p_{x_3} (\Torus^3)},
    \end{align*}
    the above estimates lead the quadratic estimate
    \begin{align*}
        X_T (N(v^\ast, v))
        \leq C_2 X_T(v)^2
        + C_1 T^{\frac{1}{4}} X_T(v)
        + C_0 \delta
    \end{align*}
    for some constants $C_0, C_1, C_2$ and small $0 < T < 1$.
    If we take $T$ and $\delta$ sufficiently small beforehand, $X_T (N(v^\ast, v))$ can be bounded small for small $v$.
    Since this argument is same as the proof of Theorem \ref{thm_main}, we omit details here.
    By the similar way, we see that $N$ becomes a contraction mapping for small $v$.
    Thus we can obtain the desired solution by the contraction mapping principle.
\end{appendices}
    
\end{document}